\newcommand{\ud}{\,\mathrm{d}}
\newcommand{\udiv}{\, \mathrm{div}}
\newcommand{\eps}{\varepsilon}
\definecolor{darkergreen}{rgb}{0.0, 0.5, 0.0}
\numberwithin{equation}{section}
\newcommand{\be}{\begin{eqnarray}}
	\newcommand{\ee}{\end{eqnarray}}
\newcommand{\ce}{\begin{eqnarray*}}
	\newcommand{\de}{\end{eqnarray*}}
\newtheorem{theorem}{Theorem}[section]
\newtheorem{lemma}[theorem]{Lemma}
\newtheorem{remark}[theorem]{Remark}
\newtheorem{definition}[theorem]{Definition}
\newtheorem{proposition}[theorem]{Proposition}
\newtheorem{Examples}[theorem]{Example}
\newtheorem{corollary}[theorem]{Corollary}
\newtheorem*{theorem*}{Theorem}
\newtheorem*{remark*}{Remark}
\newcommand{\assign}{:=}
\newcommand{\mathd}{\mathrm{d}}
\newcommand{\tmop}[1]{\ensuremath{\operatorname{#1}}}
\def\eps{\varepsilon}
\def\[{{\Big[}}
\def\]{{\Big]}}
\def\<{{\langle}}
\def\>{{\rangle}}
\def\({{\Big(}}
\def\){{\Big)}}
\def\bx{{\mathbf{x}}}
\def\dif{{\mathord{{\rm d}}}}
\def\min{{\mathord{{\rm min}}}}
\def\={&\!\!=\!\!&}
\def\cM{{\mathcal M}}
\def\cN{{\mathcal N}}
\def\cP{{\mathcal P}}
\def\cX{{\mathcal X}}
\def\mH{{\mathbb H}}
\def\mK{{\mathbb K}}
\def\mP{{\mathbb P}}
\def\mR{{\mathbb R}}
\def\mW{{\mathbb W}}
\def\1{{\mathbf{1}}}
\def\sB{{\mathscr B}}
\def\geq{\geqslant}
\def\leq{\leqslant}
\def\le{\leqslant}
\def\div{\mathord{{\rm div}}}
\def\eps{\varepsilon}
\def\[{{\Big[}}
\def\]{{\Big]}}
\def\<{{\langle}}
\def\>{{\rangle}}
\def\({{\Big(}}
\def\){{\Big)}}
\def\bx{{\mathbf{x}}}
\def\dif{{\mathord{{\rm d}}}}
\def\min{{\mathord{{\rm min}}}}
\def\={&\!\!=\!\!&}
\def\bt{\begin{theorem}}
	\def\et{\end{theorem}}
\def\bl{\begin{lemma}}
	\def\el{\end{lemma}}
\def\br{\begin{remark}}
	\def\er{\end{remark}}
\def\bx{\begin{Examples}}
	\def\ex{\end{Examples}}
\def\bd{\begin{definition}}
	\def\ed{\end{definition}}
\def\bp{\begin{proposition}}
	\def\ep{\end{proposition}}
\def\bc{\begin{corollary}}
	\def\ec{\end{corollary}}
\def\geq{\geqslant}
\def\leq{\leqslant}
\def\le{\leqslant}
\def\div{\mathord{{\rm div}}}
\def\<{\langle} \def\>{\rangle}
\begin{document}

	\title[Non-exchangeable interacting diffusions  with singular kernels ]{Mean-field limit of Non-exchangeable interacting diffusions with singular kernels}
	
	\author{Zhenfu Wang}
	\address[Z. Wang]{Beijing International Center for Mathematical Research, Peking University, Beijing 100871, China}
	\email{zwang@bicmr.pku.edu.cn}
	
	\author{Xianliang Zhao}
	\address[X. Zhao]{ Academy of Mathematics and Systems Science,
		Chinese Academy of Sciences, Beijing 100190, China; Fakult\"at f\"ur Mathematik, Universit\"at Bielefeld, D-33501 Bielefeld, Germany}
	\email{xzhao@math.uni-bielefeld.de}
	
	\author{Rongchan Zhu}
	\address[R. Zhu]{Department of Mathematics, Beijing Institute of Technology, Beijing 100081, China%; Fakult\"at f\"ur Mathematik, Universit\"at Bielefeld, D-33501 Bielefeld, Germany
	}
	\email{zhurongchan@126.com}

	\begin{abstract}
		The mean-field limit of interacting diffusions without exchangeability, caused by weighted interactions and non-i.i.d. initial values, are investigated. The weights could be signed and  unbounded.  The result  applies to   a large class of singular kernels  including the Biot-Savart law. We demonstrate a  flexible type of mean-field  convergence, in contrast to the typical convergence of $\frac{1}{N}\sum_{i=1}^N\delta_{X_i}$. More specifically,  the sequence of signed empirical measure processes with arbitrary uniform $l^r$-weights, $r>1$, weakly converges to a coupled PDE's, such as the dynamics describing the passive scalar advected by the  2D Navier-Stokes equation.

Our method is based on a tightness/compactness argument and makes use of the systems' uniform Fisher information. The main difficulty  is to determine how to propagate the regularity properties of the limits of empirical measures in the absence of the DeFinetti-Hewitt-Savage theorem for the non-exchangeable case. To this end, a sequence of random measures, which merges weakly with a sequence of weighted empirical measures and has uniform Sobolev regularity,  is constructed through the disintegration of the  joint laws of particles.

	\end{abstract}

	\keywords{Interacting particle systems, Mean-field limit,  2D Navier-Stokes equation, Passive scalar turbulence}
	
	\date{\today}
	
	\maketitle
	
	\setcounter{tocdepth}{1}
	
	\tableofcontents

\section{{{}}Introduction}
In this article  we consider non-exchangeable interacting particle systems with singular kernels  in the Euclidean space $\mathbb{R}^d$. Given random initial data $\{X_i(0)\}_{i=1}^N$, the position of each particle $X_i$ is characterized by the following  SDEs\begin{align}
\dif X_i = & \frac{1}{N} \sum_{j \neq i} w_{j}^N { K(t,X_i - X_j)} \ud t  +\sqrt{2} \dif B_t^i, \quad
	  i = 1, \ldots, N,
	\label{eq:IPS}
	\end{align}
 where $K$ denotes the interaction kernel,  $(B^i_\cdot )$ are the independent standard Brownian motions on $\mathbb{R}^d$, $d\geq 2, $ and those  $\{w_j^N\}{\subset \mR}$ are non-identical  deterministic weights that  satisfy  the following assumption for some $r\in (1,\infty]$
\begin{align}
(\mW_r):	\quad \frac{1}{N}\sum_{j=1}^N |w_{j}^N|^r=O(1),\quad \text{for }r<\infty; \quad \max_{1\leq  j\leq N} |w_{j}^N|=O(1), \quad \text{for }r=\infty, \quad \text{as } N\rightarrow \infty,\label{con:wights}
\end{align}
where $O(\cdot)$ means ~``proportional to''.  Here $r>1$  ensures that the system is weakly interacting, indeed   ${|w_j^N|}/{N}\rightarrow 0$. %, otherwise $\frac 1 N \sum_j |w_j^N|^r$ will blow up as $N \to \infty$.

One guiding example of the interacting particle system \eqref{eq:IPS} is  the famous stochastic vortex model with general intensities, i.e. the kernel $K$ in the system \eqref{eq:IPS} is the Biot-Savart law defined by
\begin{align}\label{eq:K}
	K= \nabla^{\perp}G=(-\partial_2G,\partial_1G), 
\end{align}
where $G$ is the Green function of the Laplacian on $\mathbb{R}^2$.  Note in particular that
\begin{align*}
	K(x) = \frac{1}{2\pi} \frac{x^\bot}{|x|^2}
\end{align*}
where $x^\bot = (x_1, x_2)^\bot = (-x_2, x_1) \in \mathbb{R}^2$. Now the weights $w_j^N$ denotes the intensity/magnitude of the $j$-th point vortex at position $X_j$. One may expect that now the (weighted) empirical measure defined as 
\begin{equation*}
\mu_N(t) = \frac 1 N \sum_{j=1}^N w_j^N \delta_{X_j(t)}
\end{equation*}
will converge to the  (weak) solutions of the 2D Navier-Stokes equations in the vorticity form 
\begin{equation}\label{2D-NSE_new}
\partial_t v(t, x) + \div ( v(t, x) K * v(t,x) ) = \Delta v(t, x).  
\end{equation} 
Our main results  validate  the above mean-field approximation for 2D Navier-Stokes equation under  very general assumptions on the intensities $w_j^N$: now those $(w_j^N)$ can be of mixed-sign and unbounded. See in particular Theorem \ref{thm_point_vortex}. 

Many-particle systems written in the canonical form \eqref{eq:IPS}  or its  variant are now quite ubiquitous. Such systems are usually formulated by the first-principle agent-based models which are conceptually simple. For instance, in physics those particles $X_i$ can represent ions and electrons in plasma physics \cite{dobrushin1979vlasov}, or molecules in a fluid \cite{jabin2004identification} or even large scale galaxies \cite{Jean} in some cosmological models; in biological sciences, they typically model the collective behavior of animals or micro-organisms (for instance flocking, swarming and chemotaxis and other aggregation phenomena \cite{carrillo2014derivation}); in economics or social sciences particles are usually individual ``agents” or ``players” for instance in opinion dynamics \cite{friedkin1990social} or in the study of mean-field games \cite{lasry2007mean,huang2006large}. Motivation even extends to the analysis of large biological \cite{bossy2015clarification} or artificial \cite{mei2018mean} neural networks in neuroscience or in machine learning.

The classical and more recent investigations on the topic of mean-field approximation  have mainly  focused on the case $w_j^N \equiv 1$ for all  $1 \leq j\leq N$. In this case, under mild assumptions, it is well-known  %for instance in  \cite{sznitman1984nonlinear,jabin2018quantitative}
(see for instance \cite{mckeanpropagation,braun1977vlasov,dobrushin1979vlasov,osada1986propagation,sznitman1991topics,fournier2014propagation,jabin2018quantitative,serfaty2020mean,jabin2014review,bresch2020mean} )  that  the (usual) empirical measure  $\frac{1}{N}\sum^N_{i=1}\delta_{X_i(t)}$ of  the particle system \eqref{eq:IPS} converges to  the solution $v_t $ to the  nonlinear mean-field PDE \eqref{2D-NSE_new} as $N \to \infty$.  In particular,  mean-field limit on  exchangeable systems (i.e. $w_j^N\equiv1$  and  given the i.i.d. initial data)  is  equivalent to  {\em propagation of chaos}, i.e.  the $k-$marginal distribution  of the particle system  converges to the tensor product of the limit law $g^{\otimes k}$  as $N$ goes to infinity.

Classically, mean-field limit   implies that a  continuum model can be found to approximate the associated particle system  when  $N$ is large.  In this article, we not only establish   mean-field limit for systems with general weights $w^N\assign (w_1^N,...,w_N^N)$ as in the system \eqref{eq:IPS}, as a byproduct, we demonstrate a more flexible mean-field  convergence. Indeed, we can consider the following continuum model given by two coupled PDE's 
\begin{equation}\label{eqt:coupled pde}
	\begin{cases}
			\partial_t g_t =\Delta g_t - \udiv\( g_t \,  K*v_t\) ,  \\
			\\
		\partial_t v_t= \Delta v_t-\udiv \(v_t K*v_t \).
	\end{cases}
\end{equation}
 The continuum  model \eqref{eqt:coupled pde} turns out to be a suitable mean-field system for  the linear statistics of  the interacting diffusions  \eqref{eq:IPS}.
Formally, let $\{\tilde{w}^N\}$ be any other sequence of weights that satisfies the assumption  $(\mathbb{W}_r)$,  then the system \eqref{eqt:coupled pde} is a continuous approximation to \eqref{eq:IPS},  in the sense that as $N \to \infty$,
\begin{align*}
\frac{1}{N}\sum_{i=1}^Nw_i^N\delta_{X_i}\overset{d}{=} v+o(1),\quad \frac{1}{N}\sum_{i=1}^N\tilde{w}_i^N\delta_{X_i}\overset{d}{=}g+o(1),
\end{align*}
where $   \overset{d}{=}$ means that the approximation holds in the sense of  distribution. The novelty of  this approximation is that now the choice of weights  $\tilde{w}^N$ can be quite flexible, including the classical average type, i.e. $\tilde{w}_i^N=1$ for all $i$,  or more general choice based on the relative importance of each particle. When $K$ is the Biot-Savart law, our main results  not only  provides viscous vortex model approximation to the vorticity formulation of the 2D Navier-Stokes equation but now the vorticity is of mixed sign, but also establish a particle approximation to the related passive scalar equation where the flow is given by the Navier-Stokes equation.

\begin{remark}
%(1) If we just consider the deterministic setting,   there is  no Brownian motion term  in \eqref{eq:IPS},  and set for instance $\mu_N = \frac 1 N \sum_{i=1}^N w_i^N  \delta_{X_i} $ and $\tilde \mu_N = \frac{1}{N} \sum_{i=1}^N \tilde{w_i}^N  \delta_{X_i}$, then it is easy to check that $\mu_N$ and $\tilde{\mu}_N$ solves
	%	\begin{equation*}
	%	\partial_t \mu_N  + \udiv_x (\mu_N K \star \mu_N ) = 0,
	%	\end{equation*}
	%	and
	%	\begin{equation*}
		%	\partial_t \tilde \mu_N  + \udiv_x ( \tilde \mu_N K \star \mu_N ) = 0,
	%	\end{equation*}
%	respectively. To derive the above PDE or the continuum model \eqref{eqt:coupled pde}, the interacting particle system \eqref{eq:IPS} is the most general form we can expect. Indeed, if the weights depend both on $i$ and $j$, i.e. $w= (w_{ij}^N)$ as in \cite{jabin2021mean}, then there is no simple way to define an empirical measure 
	  If originally we write that $Z_i = (X_i, w_i^N )$, then the expected mean field limit for the extended space reads that
\begin{equation*}
	\partial_t f (t, x, w) + \udiv_x \Big( f(t, x, w ) \int_{\mathbb{R}^d \times \mathbb{R}}  w' K (x - y) f(t, \ud y, \ud w' ) \Big)= \frac 1 2 \Delta_x f (t, x, w).
	\end{equation*}
The 2nd equation in \eqref{eqt:coupled pde} is related to the above one by
\begin{equation*}
	v(t, x)= \int_{\mathbb{R}}  w f(t, x, \ud w).
\end{equation*}
Then a  natural idea to establish the mean field limit from \eqref{eq:IPS} towards  \eqref{eqt:coupled pde} is to do so  in the extended phase space first then project to the observable ($v$ or $g$)  in a  lower dimensional space as  what have been done for  systems with smooth interaction kernels in \cite{crevat2019mean}  by the classical Dobrushin's estimate \cite{dobrushin1979vlasov}.  See also the examples in \cite{piccoli2014generalized,rotskoff2019global} for particle systems with even evolutionary weights.  We leave the study of  the system \eqref{eq:IPS} with singular $K$ and time-varying $w_j^N= w_j^N(t)$ for future work.

{ It is instructive to contrast our setting with situations where propagation of chaos might be expected.  If the initial random variables $(Z_i(0))$ were i.i.d., the entire system would be exchangeable, and one could pursue a proof of propagation of chaos in this extended space for $Z_i$.  This is precisely the setting in \cite{fournier2014propagation}, where weights are sampled i.i.d. from a common distribution. However, the core feature of our work is that the weights $(w_i)$ are deterministic and non-identical. This renders the agents distinguishable from the outset. Consequently, the joint law of $(Z_1, \dots, Z_N)$ is not symmetric under permutation of the indices. This intrinsic non-exchangeability breaks the standard propagation of chaos framework and constitutes the central challenge we address.}
\end{remark}

\subsection{Main results}
To state our main  result  in a concrete way, we  first give the  definitions  of solutions to the particle system \eqref{eq:IPS} and the mean-field PDEs \eqref{eqt:coupled pde}.

We shall use  the (non-normalized) Boltzmann entropy functional on $\mathcal{P}_{\gamma}(\mathbb{R}^{dN})$, which is the subspace of the probability measure space $\mathcal{P}(\mathbb{R}^{dN})$ under the constraint of  finite $\gamma$-th moment,  $\gamma\in (0,1)$. The entropy functional  is given by
\begin{align*}
	H(f)\assign \int_{\mathbb{R}^{dN}} f\log f\mathd x^N, \quad f\in \mathcal{P}_{\gamma}(\mathbb{R}^{dN})\cap L^1(\mathbb{R}^d),
\end{align*}
with $x^N$ denoting $ (x_1,...,x_N)$, where and in the sequel we also write $f$ as its density for notation's simplicity.  If $f$ has no density, then we set  $H(f)=+\infty$. A well-known fact is that the negative part of $H(f)$ is bounded by a universal constant plus the $\gamma$-th moment of $f$. Thus the entropy functional  is well-defined on $\mathcal{P}_{\gamma}(\mathbb{R}^{dN})$.

We assume  the following conditions on the initial value and the interaction kernel.
\begin{description}
	\item[ $(\mathbb{H})$]Let  $F_0^N$ be the joint distribution of $X^N(0)\assign(X_1(0),..,X_N(0))$. There exists some constant $\gamma>0$ such that
	\begin{align*}
	 H(F_0^N)+	\sum_{i=1}^N\mathbb{E}\<X_i(0)\>^{\gamma}= O(N),
	\end{align*}
where $\<x\>\assign (1+|x|^2)^{\frac{1}{2}}$. 
	\item[$(\mathbb{K}_r)$] Given  $r\in (1,\infty]$  from $(\mathbb{W}_r)$ and $d\geq 2$. The $\mathbb{R}^d$-valued kernel $K$ is of the form   $K=K_1+K_2$,  with $K_1,K_2$ satisfying
	\begin{enumerate}\label{assum:kernel}
		\item $\div K_1,\,  K_1 \in L^{q_1} ([0,T],L^{p_1}(\mathbb{R}^d)) $ with  $\frac{d}{p_1}+\frac{2}{q_1}+\frac{2}{r}<2$, where  the equality can be attached when $q_1, r<\infty$;
		
		\item $K_2\in L^{q_2}([0,T],L^{p_2}(\mathbb{R}^d))$ with $\frac{d}{p_2}+\frac{2}{q_2}+\frac{1}{r}< 1$, where  the equality can be attached when $q_2, r<\infty$.
		%either $ q_2<\infty$ or $t\rightarrow K(t,\cdot)$ is continuous.
	\end{enumerate} 
\end{description}

The assumption $(\mathbb{H})$ requires the initial values to satisfy that the entropy and moment estimates are proportional to the number of particles, this is very natural. From $(\mathbb{H})$, we can later obtain that the Fisher information of the particle system is proportional to the number of particles. For kernels that are regular, such as Lipschitz, this assumption is not necessary. However, for singular kernels, we need to assume $(\mathbb{H})$ here to obtain the regularity of the joint distribution, which can be applied to control singular kernels, see Corollary \ref{coro:averagesymme} below. Next, we give typical examples satisfying condition $(\mK_r)$. 

\textbf{Examples}
	\begin{enumerate}
		\item  The Biot-Savart law in dimension 2 as in \eqref{eq:K}.  It is divergence free and  belongs to $L^{p}+L^{\infty}$ with $1<p<2$, so that it satisfies $(\mathbb{K}_r)$ with $r>2$.
		
		\item $K(x)= \frac{x}{|x|^{\alpha}}$ or $-\frac{x}{|x|^{\alpha}}$ with $\alpha\in (1,2)$ and $d\geq 2$.    Then $K\in L^{p}+L^{\infty}$ with $1<p<\frac{d}{\alpha-1}$ and  $(\mathbb{K}_r)$ holds with  $r>\frac{1}{2-\alpha}$.  Since the weights are allowed to  be of  mixed signs, the force  $K$ could be attractive or repulsive. 
	\end{enumerate}

%Let $\theta_{\gamma}$ be the function  $C_\gamma e^{-|x|^{\gamma}}$,  where  $\gamma\in (0,1)$ and $C_\gamma$ is the normalized contant such that  $ \|\theta_{\gamma}\|_{L^1}=1$.

For the particle system \eqref{eq:IPS} on $[0,T], \,  T > 0$, we define the notion of entropy solutions.

\begin{definition}[Entropy Solutions]\label{def:entropy}
Let $X^N=(X_1,...,X_N)$ be a $C([0,T],\mathbb{R}^{dN})$-valued {random variable} satisfying the initial condition $(\mathbb{H})$, and denote the law of $X^N(t)$ by $F_t^N$.

	For the system \eqref{eq:IPS} with the   condition $(\mathbb{K}_r)$, we call  $X^N$  an entropy solution if there exists a universal constant $C>0$ and a stochastic basis $(\Omega, \mathcal{F}, (\mathcal{F}_t)_{t\geq0},\mathbb{P})$ with  a standard $dN$-dimensional Brownian motions $(B_1,...,B_N)$ such that $X^N$ satisfies  the system \eqref{eq:IPS} $\mP$-almost surely and  for $t\in[0,T]$, it holds 
	\begin{align}\label{eq:ent}
		\frac1NH(F_t^N)+	\frac1N\sum_{i=1}^N\mathbb{E} \<X_i(t)\>^{\gamma} +\frac{1}{2 N} \int_0^{{t}}\int_{\mathbb{R}^{dN}} \frac{|\nabla F_t^N|^2}{F_t^N}\mathd x^N \mathd t \leq \frac1N	H(F_0^N)+\frac1N	\sum_{i=1}^N\mathbb{E}\<X_i(0)\>^{\gamma}+ C,
	\end{align}
where and in the sequel $F_t^N$ also denotes its density. 
\end{definition}
Clearly, each entropy solution to \eqref{eq:IPS} is a probabilistically weak solution. The next result gives the existence of entropy solutions.
\begin{proposition}[Proposition \ref{prop:entropy} below]\label{prop}
Under the conditions $(\mathbb{H}) $, $(\mathbb{K}_r)$, and $(\mathbb{W}_r)$  for some $r\in (1,\infty]$,  for each $N\in \mathbb{N}$, there exists an entropy solution $X^N$ to the particle system \eqref{eq:IPS} such that the entropy dissipation inequality \eqref{eq:ent}  holds with some universal constant $C$ that is independent of $N$.
\end{proposition}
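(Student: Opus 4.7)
The plan is a mollification plus compactness scheme. First, let $\rho^\eps$ be a standard mollifier on $\mR^d$ and set $K^\eps \assign K\ast \rho^\eps = K_1^\eps + K_2^\eps$. Since $K^\eps$ is smooth and bounded for every $\eps>0$, classical SDE well-posedness yields, on a stochastic basis carrying the driving Brownian motions and with joint initial law $F_0^N$, a unique strong solution $X^{N,\eps}$ to the regularised version of \eqref{eq:IPS}. Its joint density $F_t^{N,\eps}$ is smooth and positive and satisfies the Fokker--Planck equation
\begin{align*}
\p_t F_t^{N,\eps} = \Delta F_t^{N,\eps} - \sum_{i=1}^N \udiv_{x_i}\bigl( b_i^{\eps}\,F_t^{N,\eps}\bigr),\qquad b_i^\eps(x^N)\assign \frac{1}{N}\sum_{j\neq i} w_j^N K^\eps(x_i-x_j).
\end{align*}

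The core step is an $\eps$-uniform version of \eqref{eq:ent}. Testing the Fokker--Planck equation against $\log F_t^{N,\eps}$ and integrating by parts produces
\begin{align*}
\frac{\ud}{\ud t}H(F_t^{N,\eps}) + \int_{\mR^{dN}} \frac{|\nabla F_t^{N,\eps}|^2}{F_t^{N,\eps}}\,\ud x^N = \sum_{i=1}^N\int_{\mR^{dN}} b_i^\eps \cdot \nabla_{x_i} F_t^{N,\eps}\,\ud x^N.
\end{align*}
Splitting along $K=K_1+K_2$, I would integrate by parts again in the $K_1$-contribution to move the gradient onto $b_i^{\eps,(1)}$, using $\udiv K_1\in L^{q_1}_tL^{p_1}_x$ from $(\mK_r)$. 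The $K_2$-contribution, which lacks a divergence structure, is split by Young's inequality so that half of the Fisher information is absorbed on the left and one is left to control $\sum_i \int |b_i^{\eps,(2)}|^2 F_t^{N,\eps}\,\ud x^N$. Both resulting integrals are then bounded through the Gibbs variational inequality $\int \phi F \le \eta^{-1}\bigl(H(F)+\log\int e^{\eta\phi}\ud\mu\bigr)$ against a reference measure $\mu$ with matching $\gamma$-polynomial tails; by $(\mW_r)$ together with an H\"older pairing of the indices $i,j$, the scaling conditions $\tfrac{d}{p_1}+\tfrac{2}{q_1}+\tfrac{2}{r}\le 2$ and $\tfrac{d}{p_2}+\tfrac{2}{q_2}+\tfrac{1}{r}\le 1$ from $(\mK_r)$ are precisely what is needed so that the relevant exponential moments against $\mu$ stay finite uniformly in $N$ and $\eps$, yielding an $O(N)$ upper bound after a Gronwall step. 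A parallel It\^o computation on $\<X_i^\eps(t)\>^\gamma$, $\gamma\in(0,1)$, produces $\sum_i \mE\<X_i^\eps(t)\>^\gamma = O(N)$ uniformly in $\eps$; the entropy and moment bounds are closed simultaneously by a coupled Gronwall argument.

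Finally, the uniform entropy--Fisher--moment bounds give weak-$L^1$ compactness of $\{F_t^{N,\eps}\}$, while the It\^o representation together with the moment bound yields tightness of the laws of $X^{N,\eps}$ on $C([0,T],\mR^{dN})$ via a Kolmogorov-type criterion. A Skorokhod coupling produces an almost-sure limit $X^N$; since $K^\eps\to K$ strongly in $L^{q_i}_tL^{p_i}_x$ and the limit density $F^N$ retains $\sqrt{F^N}\in L^2_tH^1_x$ by lower semi-continuity of the Fisher information, one may pass to the limit inside the stochastic integral and verify that $X^N$ solves \eqref{eq:IPS} in the probabilistically weak sense. The entropy dissipation inequality \eqref{eq:ent} descends to the limit by lower semi-continuity of both $H$ and the Fisher information with respect to narrow convergence. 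The step I expect to be hardest is the $O(N)$ uniform entropy estimate at the borderline exponent budget of $(\mK_r)$: because the weights are signed and only controlled in $l^r$, any bound insensitive to sign cancellation is unavailable, and the Gibbs computation must be calibrated so that neither the double sum over interactions nor the exponential integrability of $|K_2|^2$ produces growth faster than linear in $N$.
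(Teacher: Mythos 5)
Your overall scaffolding matches the paper's proof closely: mollify $K$ to get $K^\eps$, solve the regularised system and work with the smooth Fokker--Planck density, differentiate the entropy, integrate by parts to bring out $\udiv K_1$, absorb the $K_2$-term with Young's inequality so half the Fisher information is kept on the left, run a coupled It\^o/Gronwall argument for the $\gamma$-moments, and finally pass to the limit by tightness and lower semi-continuity of $H$, $I$ and the moment. All of that is correct and is exactly what Proposition~\ref{prop:entropy} does.

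The gap is in the step you yourself flag as hardest. You propose to bound the interaction integrals $\sum_i\int b_i^{\eps}\cdot\nabla_{x_i}F$ and $\sum_i\int|b_i^{\eps}|^2F$ via the Gibbs variational inequality $\int\phi\,F\le \eta^{-1}(H(F|\mu)+\log\int e^{\eta\phi}\ud\mu)$. This cannot close under $(\mK_r)$: the assumption is only an $L^{q}_tL^{p}_x$ integrability condition, which is \emph{polynomial}, and the kernels it covers have no exponential moments near the diagonal. Already for the Biot-Savart law in $d=2$ one has $|K_1(x)|^{r/(r-1)}\sim|x|^{-r/(r-1)}$ and $e^{\eta|x|^{-r/(r-1)}}$ is not integrable against any absolutely continuous reference measure for any $\eta>0$, so $\log\int e^{\eta\phi}\ud\mu=+\infty$ and the bound is vacuous. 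The inequality $(\mK_r)$ is calibrated for a fundamentally different control: the Fisher information, which by Sobolev embedding (Lemma~\ref{lemma:sobo}) yields $L^p$ bounds on marginal densities, so that singular $L^{p}$ kernels can be paired with them by H\"older (Lemma~\ref{lemma:lpqf}). Entropy plus Gibbs gives exponential-scale estimates; Fisher information plus Sobolev gives polynomial-scale estimates, and only the latter is available here.

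Relatedly, you do not say how to distribute the gain of Fisher information over the double sum in $i,j$ when $F^N$ is \emph{not} symmetric. That is the crux of the non-exchangeable setting and the main technical novelty of the paper: Lemma~\ref{lemma:sym} and its Corollary~\ref{coro:averagesymme} show, via a pairing-partition decomposition of $\{1,\dots,N\}$ and a change of variables $Y_i=\frac{1}{\sqrt2}(X_i-X_j)$ plus sub-additivity of Fisher information, that
$$
\frac{1}{N^2}\sum_{i\neq j}\int_0^T\int |\tilde K(x_i-x_j)|^{\frac{r}{r-1}}F^N\,\ud x^N\,\ud t\ \lesssim\ 1+\frac{1}{N}\int_0^T I(F^N)\,\ud t
$$
\emph{without} exchangeability. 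The ``H\"older pairing of indices $i,j$'' you invoke does not by itself yield this: H\"older reduces the double sum to an average of $\mathbb{E}|K(X_i-X_j)|^{r/(r-1)}$, but controlling each pair by $I(\mathcal L(X_i,X_j))\le \frac{2}{N}I(F^N)$ is exactly the step that fails when $F^N$ is asymmetric, and the averaging lemma is what replaces it. Replacing the Gibbs step by Lemmas~\ref{lemma:sobo}, \ref{lemma:lpqf} and the averaging Lemma~\ref{lemma:sym} (or Corollary~\ref{coro:averagesymme}) closes the argument and gives the $O(N)$ uniform entropy estimate you were aiming for; the rest of your outline then goes through as written.
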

The regularity of entropy solutions with a uniform constant, in particular the Fisher information,  would enable us to deal with the singular kernel and  find the mean-field limits.  The entropy solution has  been shown useful for  studying interacting diffusions, and  Proposition \ref{prop} is indeed analogous to  \cite[Proposition 5.1]{fournier2014propagation} and \cite[Proposition 1]{jabin2018quantitative}, but we do not need the divergence free or  bounded-like   conditions on the kernel. %, where   exchangeability or bounded-like   conditions on the kernel are required. %The average estimate in Lemma \ref{lemma:sym}  below is what we will use to overcome the non-exchangeability while allowing singular kernels. By taking $N=2$ in Lemma \ref{lemma:sym}   one arrives at  \cite[Lemma 3.3]{fournier2014propagation} for the exchangeable case.

The well-posedness of the singular interacting system  \eqref{eq:IPS}  with general weights is a fascinating and challenging problem. To the best of the authors' knowledge, the existing results concern specific kernels, such as  \cite{fontbona2007paths}  and \cite{flandoli2011full}  on the Biot-Savart law.  There are a lot of results for identical  weights, for example  \cite{osada1985stochastic,takanobu1985existence,marchioro2012mathematical} on the Biot-Savart law and the recent result \cite{hao2022strong,tomavsevic2023propagation} on $L^q([0,T],L^p(\mathbb{R}^d))$-kernels with $d/p+2/q<1$. However, under the condition ($\mathbb{K}_{\infty}$), even the well-posedness result for associated  SDE: $X_t=X_0+B_t+\int_0^tK(X_s)\mathd s$  remains open,  where the difficulty  comes from the singular kernel $K_1$.  The existence of   probabilistically  weak solutions to the SDE with kernel $K_1$ in the assumption  ($\mathbb{K}_{\infty}$) has been shown in  \cite{zhang2021stochastic}.

We consider the solution in the space $C([0,T], \mathcal{M}(\mathbb{R}^d))$ for the mean-field PDE system \eqref{eqt:coupled pde}. Here  $\mathcal{M}(\mR^d)$ stands for  the space of finite signed measures on $\mR^d$ with the  topology induced by bounded and continuous (test) functions.  We use it as   the state space for the convergence in our main results below.  The solutions to \eqref{eqt:coupled pde} are defined as follows.

\begin{definition} \label{def:pde}We call  $(v,g)\in C([0,T], \mathcal{M}(\mathbb{R}^d))^{\otimes 2}\cap L^{\infty}([0,T],L^1(\mathbb{R}^d))^{\otimes 2}$  a solution to the system \eqref{eqt:coupled pde} if $(v,g)$ satisfies \eqref{eqt:coupled pde}  in the distributional sense and the following estimate holds
	\begin{align}
	 \|v\|_{L^p_q} +  \|g\|_{L^p_q}<\infty ,	\end{align}
for any $p,q$ satisfying 
$$\frac{d}{p}+\frac{2(r-1)}{r}\geq d,\text{ }\frac{d}{p}+\frac{2}{q}\geq d,\text{ } 1\leq p,q<\infty,$$
where $\|\cdot\|_{L^p_q}\assign \|\cdot\|_{L^q([0,T],L^p(\mathbb{R}^d))}$ and $r\in (1,\infty]$. We denote $\frac{r-1}{r}\assign1$ when $r=\infty$.
\end{definition}

\begin{remark}
	The conditions on $p,q,r$ in the above definition ensures that the nonlinear term in the coupled PDEs \eqref{eqt:coupled pde} is well-defined, and also enables  us to deduce uniqueness later. In particular, the uniqueness for the vorticity form of the 2D { Naiver-Stokes} equation follows by applying the results in  \cite{fournier2014propagation, brezis1994remarks,ben1994global}, and see also \cite{gallagher2005uniqueness} for the case with a finite measure  as initial vorticity.

\end{remark}

The first main  result shows  that \eqref{eqt:coupled pde} characterizes the mean-field limits of the interacting system \eqref{eq:IPS}.
\begin{theorem}\label{thm:mfpde}
Let $\{{w}^N\}$ and $\{\tilde{w}^N\}$ be two sequences of weights satisfying the condition $(\mathbb{W}_r)$ with $r\in (1,\infty]$ and suppose that the conditions $(\mathbb{H}) $, $(\mathbb{K}_r)$ hold. Let $X^N$ be an entropy solution to \eqref{eq:IPS}  given by  Proposition \ref{prop}. Assume that there exist $v_0, g_0\in L^1(\mathbb{R}^d)$ such that 
\begin{align}\label{con:initial}
	 \frac{1}{N}\sum_{i = 1}^N w_i^N\delta_{X_i(0)}\rightharpoonup v_0, \quad 	 \frac{1}{N}\sum_{i = 1}^N \tilde{w}_i^N\delta_{X_i(0)}\rightharpoonup g_0
\end{align}
in %the space of finite measures
$\mathcal{M}(\mathbb{R}^d)$ almost surely, where $\rightharpoonup$ means the weak convergence in $\cM(\mR^d)$.

%There exists $(v,g)$ solving  \eqref{eqt:coupled pde} with initial value $(v_0,g_0)$ such that,
%up to  the extraction of a subsequence,
It holds that the corresponding family of laws for   the weighted empirical measures $(\mu_N, \tilde{\mu}_N)$ defined by
\begin{equation}\label{def:Empirical}
	\mu_N(t)\assign \frac{1}{N}\sum_{i=1}^Nw_i^N \delta_{X_i(t)}, \quad 	\tilde{\mu}_N(t)\assign \frac{1}{N}\sum_{i=1}^N\tilde{w}_i^N \delta_{X_i(t)},
\end{equation}
is tight in  $C([0,T], \mathcal{M}(\mathbb{R}^d))^{\otimes 2}$ and every accumulation point 
is a solution to  \eqref{eqt:coupled pde} with initial value $(v_0,g_0)$ in the sense of Definition \ref{def:pde}. 
\end{theorem}

{Here, the term ~``accumulation point" refers to the limit of a convergent subsequence. Indeed, due to the tightness of the sequences $\mu_N$ and $\tilde{\mu}_N$, we can apply Prokhorov’s theorem followed by the Skorokhod representation theorem to extract convergent subsequences. Moreover, one can verify that every such limiting point satisfies equation \eqref{eqt:coupled pde}.  Note that in the absence of a uniqueness result for \eqref{eqt:coupled pde}, the limiting point  may be a random solution.}

\begin{remark*}
	Theorem \ref{thm:mfpde} derives  \eqref{eqt:coupled pde} from the particle system \ref{eq:IPS}. On the other hand, given a solution to  \eqref{eqt:coupled pde}  with the initial data  $v_0, g_0\in L^1(\mathbb{R}^d)$, one could also construct  a suitable sequence of   weight $\{w^N\}$ satisfying $(\mW_r)$ with $r=\infty$ and initial variables $X^N(0)$  such that \eqref{con:initial} holds, therefore 	Theorem \ref{thm:mfpde} can be applied when the uniqueness of mean-field limits holds. For instance, given $v_0\in L^1(\mathbb{R}^d)$, set $v_0^+=v_0\vee 0$ and $v_0^-\assign v_0^+-v_0$. Let $w_i^N= 2\|v_0^+\|_{L^1}$ for $i=1,...,\frac{N}{2}$ and $w_i^N= -2\|v_0^-\|_{L^1}$ for $i=\frac{N}{2}+1,...,N$; let $(X_1(0),...,X_N(0))$ be a class of independent random variables such that $X_i(0)$ is distributed as ${v_0^+}/{\|v_0^+\|_{L^1}}$ for $i=1,..., \frac{N}{2}$ and $X_i(0)$ is distributed as ${v_0^-}/{\|v_0^-\|_{L^1}}$ for $i= \frac{N}{2}+1,...,N$. Then \eqref{con:initial} can be easily checked by the law of large numbers.
\end{remark*}
%\begin{remark}\label{rem:onono}
%\rmb{Indeed, the condtions $(\mathbb{H})$ and $(\mathbb{W}_r)$  imply that the limiting points of $\mu_N(0)$ and $\tilde{\mu}_N(0)$ belong to $L^1(\mathbb{R}^d)$. So assuming  $v_0,g_0\in \mathcal{M}(\mathbb{R}^d)$ is sufficient. We put the details in the appendix for interested readers.}
%\end{remark}

Theorem \ref{thm:mfpde}  gives the mean-field limits of interacting diffusions when the kernel is  singular and the system is non-exchangeable  at the same time.  Our results extend classical mean-field limits for singular interacting diffusions  to  non-exchangeable cases, additionally, the weights for the interaction can be  unbounded, i.e., $r<\infty$. The system  \eqref{eq:IPS} with the Biot-Savart law and bounded weights has been studied in  \cite{fournier2014propagation,wynter2021quantitative}.  However, the result in \cite{wynter2021quantitative}  only applies  to the so-called two pieces interaction, which is basically  $w_i^N=a_1>0$ when $i$ is odd and $w_i^N=a_2<0$ when $i$ is even. Under that  restrictive condition, the pairs of particles $(X_{2i-1},X_{2i})$ are exchangeable. {The work of \cite{fournier2014propagation} is closer to our setting, but operates under a critically different assumption. In their framework, the weights $w_i^N$ are random variables drawn i.i.d., which renders the extended variables $Z_i = (X_i, w_i^N)$ exchangeable. This initial symmetry is the cornerstone of their proof, which establishes propagation of chaos for the extended system. In contrast, our work addresses the case of deterministic and heterogeneous weights, where the joint law of the system is not symmetric, making the particles fundamentally non-exchangeable.} 
Additionally, we note that  the convergence result in \cite{wynter2021quantitative} is quantitative, which adapts the relative entropy method as in \cite{jabin2018quantitative}, 
while the results in \cite{fournier2014propagation} and this study, which use tightness argument, are qualitative.

As particles/agents in applications are not always identical, more natural assumptions on weights are required. Let us now look at a specific  example.   Let $K(t,X_i-X_j)$ be  the interaction in a $N$-particles system described by the system of SDEs \eqref{eq:IPS}. The interaction could be viewed as a function of how $X_i$ is influenced by $X_j$, with  $w_j^N$ representing the intensity associated with $X_j$.    Let the weights be $w^{5N }=(w_1^N,...,w_N^N,0,...,0) $ with $(w_1^N,...,w_N^N) $ satisfying
\begin{align*}
	|w_i^N|=O (N^{\frac{1}{3}}),  \quad \forall 1\leq i\leq N^{\frac{1}{3}}; \quad \quad |w_i^N|=O(1), \quad N^{\frac{1}{3}}< i\leq N.
\end{align*}
Now the model is 
\begin{align*}
	\begin{cases}
		\dif Y_i = & \frac{1}{5N} \sum_{j \neq i} w_{j}^N K(Y_i - Y_j) \ud t  +\sqrt{2} \dif B_i, \quad
		i = 1, \ldots, N,	  \\
		\\
		\dif Z_m = & \frac{1}{5N} \sum_{j =1}^N w_{j}^N K(Z_m- Y_j) \ud t  +\sqrt{2} \dif B_m, \quad
		m= 1, \ldots, 4N.
	\end{cases}.
\end{align*}
Here $(Y^N,Z^{4N})$ plays the role of $X^N$ in \eqref{eq:IPS} and $(\mathbb{W}_r)$ holds for $r=2$. Notice that every particle  only interacts with the particles of the type $Y$, in other words, only the particles $(Y_i)$ contribute to the dynamics of the system. Furthermore, there are still  differences among the particles of the type $Y$.  The  particles $(Y_i)$  belongs to  two  groups,  the majority of the number $N-N^{\frac{1}{3}}$ and the minority of the number $N^{\frac{1}{3}}$.  Each particle in the   majority  contributes to the system  in a normal way that $|w_j^N|=O(1)$.
In contrast, those particles from the   minority   make significant contributions, at the scale of $N^{\frac{1}{3}}$.

%The proof in \cite{bayraktar2020graphon} is from a  probabilistic view, using the coupling methodand the convergnce of graphons.  The article   \cite{jabin2021mean}  combines the coupling method and the

%As mentioned earlier, the mean-field convergence of $(\mu_N,\tilde{\mu}_N)$ is much more general than the convergence of $(\mu_N)$, since it gives the convergence  for any possible weights $(\tilde{w}_j^N)$ in $l^r$.  Formally, one may think that our result fully recovers the linear statistical information of $(X_1,...,X_N)$  rather than just the average statistics $\frac{1}{N}\sum_{i=1}^N\varphi (X_i)$ or the specific weighted one $\frac{1}{N}\sum_{i=1}^N w_i^N \varphi(X_i)$.

 {As mentioned earlier, our result on the convergence of the pair $(\mu_N, \tilde{\mu}_N)$ is much more general than the convergence of the standard empirical measure, as it implies convergence for any sequence of weights $(\tilde{w}_j^N)$ under the assumption $(\mathbb{W}_r)$. This notion can be made precise by considering the full empirical measure on the extended space,
 		$$
 		\nu^N(t, \mathd  x, \mathd \xi) := \frac{1}{N}\sum_{i=1}^N \delta_{X_i(t)}(\mathd x) 1_{[\frac{i-1}{N},\frac{i}{N})}(\xi )\mathd \xi,
 		$$
 		which encodes the complete information of all particle positions. The "linear statistical information" of the system $(X_1, \dots, X_N)$ corresponds to testing this measure against functions. Specifically, for any $\varphi \in C_b(\mathbb{R}^d)$ and any continuous function of the particle identity $\phi \in C([0,1])$, we have:
 		$$
 		\langle \nu^N(t), \varphi(x)\phi(\xi) \rangle = \frac{1}{N} \sum_{i=1}^N \left( N \int_{\frac{i-1}{N}}^{\frac{i}{N}} \phi(\xi) \mathd \xi \right) \varphi(X_i(t)).%= \frac{1}{N} \sum_{i=1}^N \phi(i/N) \varphi(X_i(t)) + O(1/N).
 		$$
 		The right-hand side is precisely the weighted empirical measure with weights $w_i^\phi := N \int_{\frac{i-1}{N}}^{\frac{i}{N}} \phi(\xi) \mathd \xi$.  Our main result (Theorem \ref{thm:mfpde}) establishes the convergence of these observables for \textit{any} choice of $\phi$, provided the initial data converges. Therefore, our result establishes the convergence of the full empirical measure $\nu^N$ when tested against any function of the product form $\varphi(x)\phi(\xi)$.

The closure of the limiting PDE system \eqref{eqt:coupled pde} is a special feature of the weight structure $w_{ij}=w_j$. For general weights, the limit of any single such observable would not be described by a closed equation but would depend on the limit of $\nu^N$ itself.}

In particular, when $K$ is the Biot-Savart law, let $ u_t=K*v_t$ denote the  velocity of the fluid with $v$ representing the vorticity, then Theorem \ref{thm:mfpde} gives a mean-field approximation to the dynamics of a passive tracer undergoing advection-diffusion in the fluid described by the Navier-Stokes  equation,
\begin{align}\label{eqt:scalar}
	\begin{cases}
		\partial_t u_t= \Delta u_t-u_t\cdot \nabla u_t+\nabla p,	\quad \div u=0,  \\
		\\
	\partial_t g_t =\Delta g_t - u_t \cdot \nabla g_t,
	\end{cases}
\end{align}
with $p$ being the associated pressure. For more information on the physics behind passive scalars, we refer to \cite{falkovich2001particles,shraiman2000scalar,warhaft2000passive} and  references therein, and for mathematical studies, see  e.g.  \cite{alberti2019exponential,bedrossian2022almost,bedrossian2022batchelor,seis2013maximal,zelati2020relation} for instance.
%The  full result  for \eqref{eqt:scalar} is stated as Corollary \ref{coro:scalar} below.

Particle systems  on graphs  have attracted  increasing attention in recent years, including  interacting  diffusions  with  more general $w_{ij}^N$ replacing $w_{j}^N$.  Our work can be thought of as a special case (the weight $w_{ij}^N$ depends only on $j$) of  interacting  diffusions on graphs  with weights  in $l^r$ space.  The mean-field convergence for interacting  diffusions on dense  graphs  has been studied by  \cite{delattre2016note,kaliuzhnyi2018mean,kuehn2022vlasov,bayraktar2020graphon,bet2020weakly,jabin2021mean,lucon2020quenched,oliveira2019interacting,coppini2023central,luccon2014mean,ayi2024mean,jabin2024dense}  in different settings.   However,  to the best of our knowledge their results do not cover  singular kernels.      We will do detailed  comparisions on the  methods of \cite{bayraktar2020graphon, jabin2021mean} in Section \ref{subsec:method} below.   Our work takes a different approach in this direction, although we do not derive the convergence rate.  Motivated by the general form of the stochastic 2D vortex model,  we simply consider   the weights given as $(w_j^N)$ in this work, where $w_j^N$ means the intensity of the  vortex localized in $X_i$. { A core technical contribution of our work is the development of analytical tools to obtain the crucial regularity estimates required to handle singular kernels without leveraging exchangeability. Classical proofs for singular interactions often rely heavily on the symmetry of the system (e.g. \cite{fournier2014propagation}). Our approach bypasses this requirement, which is essential for the non-symmetric systems we consider. This robustness is precisely what makes our method extensible. Indeed, while the hypothesis $w_{ij}^N = w_j^N$ allows for a closed-form limiting system, our analysis would extend in an analogous manner to interacting diffusions on general $l^r$-graphons ($r > 1$), see the second named author's thesis \cite{zhao2023derivation}.}
 Beyond the setting of the  mean-field convergence  for systems on dense graphs,    the  asymptotic behavior of  interacting diffusions  on sparse graphs, which involves strong  interactions,  is another related active topic. The local weak convergence  for the sparse case
has been  obtained  in   \cite{oliveira2020interacting,lacker2019local} recently.

With additional constraints on the kernel and weights, we can demonstrate the uniqueness of the limiting points of converging subsequences and thus obtain the convergence of the entire sequence.

\begin{theorem}\label{thm:uniquepde}
	If  the kernel $K$  belongs to either of the following two cases,
	\begin{enumerate}
		\item $K$ is the 2D Biot-Savart law and  $r\in [3,\infty]$ (since the definition of  entropy solutions depends on $r$).
		\item 	Given  $r\in (1,\infty]$, and $K $ belongs to $    L^{q_2}([0,T],L^{p_2}(\mathbb{R}^d))$ with
		\begin{align}
			\frac{d}{p_2}+ \frac{2}{q_2}+\frac{1}{r}\leq 1,\quad  \frac{d}{p_2}+ \frac{2}{q_2}<1.
		\end{align}
	\end{enumerate}
Then there exists a unique solution $(v,g)$ to \eqref{eqt:coupled pde} for  each given initial value from $L^1(\mathbb{R}^d)^{\otimes 2}$.
\end{theorem}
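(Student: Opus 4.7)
My plan is to exploit the decoupled structure of \eqref{eqt:coupled pde}: the equation for $v$ is autonomous (it does not involve $g$), while the equation for $g$ becomes linear once $v$ has been fixed. Accordingly, I would first establish uniqueness of $v$, and then uniqueness of $g$ given the unique $v$.

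For the uniqueness of $v$, in either case I would start by writing the difference $w = v_1 - v_2$ of two solutions in Duhamel (mild) form,
\begin{equation*}
w(t) = -\int_0^t e^{(t-s)\Delta} \udiv\bigl(w(s)\, K*v_1(s) + v_2(s)\, K*w(s)\bigr)\,\mathd s,
\end{equation*}
and estimate it using the heat-kernel smoothing $\|e^{t\Delta}\udiv f\|_{L^p}\lesssim t^{-\frac12 - \frac d 2(\frac1q - \frac1p)}\|f\|_{L^q}$ combined with Young's convolution inequality applied to $K*v$. In Case 1 (the 2D Navier-Stokes vorticity equation with $L^1$ data), the available regularity $v\in L^q_t L^p_x$ with $p\leq r$ and $\tfrac{2}{p}+\tfrac{2}{q}\geq 2$ provided by Definition \ref{def:pde} places us in the classical framework of Ben-Artzi / Giga-Miyakawa-Osada, from which uniqueness follows by a Gronwall argument on short time intervals iterated forward. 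In Case 2, the strict subcriticality $\tfrac{d}{p_2}+\tfrac{2}{q_2}<1$ leaves room in Young's inequality to place $K*v$ in a subcritical drift class, and uniqueness is obtained by an analogous mild-solution contraction argument.

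Once the uniqueness of $v$ is in hand, $b(t,x) \assign K*v_t(x)$ is a fixed vector field and $g$ solves the linear Fokker-Planck equation $\partial_t g = \Delta g - \udiv(g b)$. I would handle this by duality: for any test function $\varphi\in C_c^\infty(\mR^d)$, let $\psi$ solve the backward Kolmogorov equation
\begin{equation*}
-\partial_t \psi - \Delta\psi - b\cdot\nabla \psi = 0,\qquad \psi(T)=\varphi.
\end{equation*}
Under the stated hypotheses, combining the regularity of $K$ and of $v$ through Young's inequality places $b$ in a space $L^{\tilde q}_t L^{\tilde p}_x$ with $\tfrac{d}{\tilde p}+\tfrac{2}{\tilde q}<1$, which is exactly the subcritical Krylov-R\"ockner condition ensuring existence of bounded, sufficiently smooth solutions $\psi$. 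Testing the weak form of the difference $h = g_1-g_2$ (which has vanishing initial data) against such $\psi$ and integrating by parts collapses all bulk terms, leaving $\int h(T)\varphi\,\mathd x = 0$ for every $\varphi$, hence $h\equiv 0$.

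The main difficulty will be verifying that the integrability exponents match up at the borderline case $r=3$ in Case 1, where the constraint $p\leq r=3$ in dimension $d=2$ is tight with respect to the Ladyzhenskaya-Prodi-Serrin scaling. A careful accounting of H\"older-Young exponents between the heat-kernel smoothing in the $v$-estimate and the Biot-Savart convolution will be required, and one may need to localize in time or invoke a smallness-plus-continuation argument to absorb the critical term. In Case 2 the strict inequalities in the hypotheses make the whole scheme strictly subcritical, so the analogous estimates are considerably softer.
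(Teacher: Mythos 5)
Your $v$-part matches the paper: in Case 2 the paper runs exactly the mild-formulation Gronwall estimate you sketch (with a carefully tuned auxiliary exponent $\kappa$), and in Case 1 it simply invokes the classical Ben-Artzi / Brezis well-posedness for 2D vorticity with $L^1$ data, upgrading the regularity of $v$ via Corollary \ref{coro:regu} so that those results apply. Your duality argument for $g$ in Case 2 is a legitimate alternative to what the paper does: since $v\in L^\infty_t L^1_x$ and $K\in L^{q_2}_t L^{p_2}_x$ with $\frac{d}{p_2}+\frac{2}{q_2}<1$, Young's inequality does put $b=K*v$ in a strictly subcritical Krylov--R\"ockner class, and testing against the backward Kolmogorov solution closes the argument; the paper instead applies the same mild-formulation Gronwall to the linear $g$-equation. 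Either route works there.

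The gap is in Case 1 for $g$. Your claim that ``combining the regularity of $K$ and of $v$ through Young's inequality places $b$ in $L^{\tilde q}_t L^{\tilde p}_x$ with $\frac{d}{\tilde p}+\frac{2}{\tilde q}<1$'' is false for the Biot--Savart case. The regularity of $v$ you have from Definition \ref{def:pde} and Corollary \ref{coro:regu} sits on the critical scaling line $\frac{2}{p}+\frac{2}{q}\geq 2$ (with equality achievable), and the Biot--Savart kernel lies in $L^{p_1}_x+L^\infty_x$ only for $p_1<2$. Young's inequality then gives $K*v\in L^q_t L^{\tilde p}_x$ with
\begin{equation*}
\frac{2}{\tilde p}+\frac{2}{q}=\frac{2}{p_1}+\Bigl(\frac{2}{p}+\frac{2}{q}\Bigr)-2\geq\frac{2}{p_1}>1,
\end{equation*}
so the drift is \emph{supercritical}, not subcritical, for Krylov--R\"ockner, and the backward Kolmogorov solvability you want is not available off the shelf. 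You do flag this borderline issue, and your instinct that a ``smallness-plus-continuation'' mechanism is needed is sound, but the concrete missing ingredient is the Ben-Artzi / Brezis smoothing estimate $t\,\lVert v_t\rVert_{L^\infty}\to 0$ as $t\to 0$ for $L^1$ vorticity data. This yields $t^{1/2}\lVert K*v_t\rVert_{L^\infty}\to 0$, which is exactly the quantitative smallness that makes a Volterra-type Gronwall argument for $g_1-g_2$ close near $t=0$ (and then one continues in finitely many steps). Without identifying this extra regularity of $v$, the $g$-uniqueness in Case 1 does not go through.
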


{ We are now in position to show that the convergence of all subsequential limits to a unique solution implies the convergence of the entire sequence. 
	We emphasize that this argument relies on the topological properties of our space. 
	The space of finite signed measures $\mathcal{M}(\mathbb{R}^d)$ endowed with the weak topology has metrizable compact subsets.  The tightness of the sequence of laws (established in Theorem \ref{thm:mfpde}) ensures that the sequence effectively lives on such a compact, and therefore metrizable, subset. In this setting, the uniqueness of subsequential limits is sufficient to conclude that the full sequence converges.}

\begin{corollary}\label{coro:scalar} Suppose that the condition $(\mathbb{H}) $ holds.
	Given two sequences of weights $\{{w}^N\}$ and $\{\tilde{w}^N\}$ satisfying the condition $(\mathbb{W}_r)$ with $r=3$.  Let $X^N$ be an entropy solution to  the stochastic vortex model  (i.e. Eq. \eqref{eq:IPS} with  $K$  the Biot-Savart law)  given by  Proposition \ref{prop}. Assume that there exist $v_0, g_0\in L^1(\mathbb{R}^d)$ such that 
	\begin{align}
		\frac{1}{N}\sum_{i = 1}^N w_i^N\delta_{X_i(0)}\rightharpoonup v_0, \quad 	 \frac{1}{N}\sum_{i = 1}^N \tilde{w}_i^N\delta_{X_i(0)}\rightharpoonup g_0\nonumber
	\end{align}
	in %the space of finite measures
	$\mathcal{M}(\mathbb{R}^2)$ almost surely.
	 Then $(\mu_N, \tilde{\mu}_N)$ defined in Theorem \ref{thm:mfpde} converges in law to $(v,g)$ in  $C([0,T],$ $\mathcal{M}(\mathbb{R}^2))^{\otimes 2}$. Here $(v,g)$ uniquely solves \eqref{eqt:coupled pde} in the sense of Definition \ref{def:pde}. In particular, $(\nabla^{\perp}(-\Delta)^{-1} v,g)$ solves the system  \eqref{eqt:scalar} of the passive scalar advected by the 2D Navier-Stokes equation.
\end{corollary}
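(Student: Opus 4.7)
The plan is to combine the tightness and identification of subsequential limits from Theorem \ref{thm:mfpde} with the uniqueness supplied by Theorem \ref{thm:uniquepde}, and then to translate the resulting coupled PDE into the velocity formulation of the 2D Navier--Stokes equation driving a passive scalar. Since most of the analytic work is already encoded in those two theorems, the corollary is essentially a consolidation step together with a standard calculation on the Biot--Savart law.

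First I would verify the hypotheses and run the standard tightness/uniqueness reduction. The 2D Biot--Savart kernel lies in $L^p(\mR^2) + L^\infty(\mR^2)$ for every $1 < p < 2$, so $(\mathbb{K}_r)$ holds for all $r > 2$, in particular for $r \in [3, \infty]$. Combined with $(\mathbb{W}_r)$ and $(\mathbb{H})$, Theorem \ref{thm:mfpde} then yields that the laws of $(\mu_N, \tilde{\mu}_N)$ are tight in $C([0, T], \mathcal{M}(\mR^2))^{\otimes 2}$ and that every weak subsequential limit is, almost surely, a solution to \eqref{eqt:coupled pde} in the sense of Definition \ref{def:pde} with the deterministic initial datum $(v_0, g_0) \in L^1(\mR^2)^{\otimes 2}$. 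Since $r \in [3, \infty]$, case (1) of Theorem \ref{thm:uniquepde} applies and the solution with initial datum $(v_0, g_0)$ is unique. The standard sub-subsequence argument then upgrades the subsequential convergence to convergence of the entire sequence in law, $(\mu_N, \tilde{\mu}_N) \to (v, g)$ in $C([0, T], \mathcal{M}(\mR^2))^{\otimes 2}$, with the limit being a Dirac mass at the deterministic element $(v, g)$.

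It remains to recast \eqref{eqt:coupled pde} as \eqref{eqt:scalar}. Setting $u_t := K * v_t = \nabla^\perp(-\Delta)^{-1} v_t$, one has $\udiv u = 0$ and $v = \partial_1 u_2 - \partial_2 u_1$. Because $\udiv u = 0$, $\udiv(v u) = u \cdot \nabla v$, so the first equation of \eqref{eqt:coupled pde} is exactly the vorticity form $\partial_t v + u \cdot \nabla v = \Delta v$ of the 2D Navier--Stokes equation; applying $\nabla^\perp(-\Delta)^{-1}$ and reconstructing the pressure $p$ through the Leray projection of $u \cdot \nabla u$ yields the velocity form $\partial_t u = \Delta u - u \cdot \nabla u + \nabla p$ with $\udiv u = 0$. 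The second equation of \eqref{eqt:coupled pde} similarly becomes $\partial_t g = \Delta g - u \cdot \nabla g$, the advection--diffusion equation for a passive scalar $g$ in the flow $u$. No step in this plan presents a genuine obstacle, as the hard work has been done in Theorems \ref{thm:mfpde} and \ref{thm:uniquepde}; the only mildly delicate point is to check that the integrability of $(v, g)$ from Definition \ref{def:pde} suffices for the Biot--Savart reconstruction and Leray projection to give a bona fide distributional solution of \eqref{eqt:scalar}.
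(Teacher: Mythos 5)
Your proposal is correct and follows exactly the route the paper intends: the corollary is stated immediately after Theorem \ref{thm:uniquepde} precisely because it combines the tightness/characterization of Theorem \ref{thm:mfpde} (valid since the Biot--Savart law satisfies $(\mathbb{K}_r)$ for $r>2$) with the uniqueness from Theorem \ref{thm:uniquepde} case (1), and then a sub-subsequence argument upgrades tightness to full convergence in law toward the deterministic solution. The final vorticity-to-velocity translation via $u = K*v = \nabla^\perp(-\Delta)^{-1}v$ and the Leray projection is standard and matches the paper's remarks surrounding \eqref{eqt:scalar}.
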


Finally, simply by choosing the same weight sequences $\tilde w_j^N = w_j^N$ as the intensities of the $j-$th point vortex, one arrives at the following theorem. 
\begin{theorem}[Mean field limit for stochastic vortex model with general intensities] \label{thm_point_vortex} Suppose that the condition $(\mathbb{H}) $ holds.
	Given a sequence of intensities $w^N = (w_j^N)_{1 \leq j \leq N}$ that satisfies  the condition $(\mathbb{W}_r)$ with $r=3$.  Let $X^N$ be an entropy solution to  the stochastic vortex model  \eqref{eq:IPS} with  $K$  the Biot-Savart law.  Assume that the initial data for the 2D Navier-Stokes equation \eqref{2D-NSE_new}  $v_0\in L^1(\mathbb{R}^d)$ and  
	\begin{align}
	\frac{1}{N}\sum_{i = 1}^N w_i^N\delta_{X_i(0)}\rightharpoonup v_0, \nonumber
	\end{align}
	in %the space of finite measures
	$\mathcal{M}(\mathbb{R}^2)$ almost surely.
	Then the empirical measure $\mu_N = \frac 1 N \sum_{i=1}^N w_i^N \delta_{X_i}$ converges in law to $v$ in  $C([0,T],$ $\mathcal{M}(\mathbb{R}^2))$, where $v$ is the  unique solution to the second equation of  \eqref{eqt:coupled pde} in the sense of Definition \ref{def:pde}. 
\end{theorem}

Mean field limit and propagation of chaos for the 1st order system given in the  form  \eqref{eq:IPS} with $w_j^N=1$ have been extensively studied over the last decade. The basic idea of deriving some effective PDE describing the large scale behaviour of interacting particle systems dates back to Maxwell and Boltzmann. But in our setting, the very first mathematical investigation can be traced back to McKean in \cite{mckeanpropagation}. See also the classical mean field limit from Newton dynamics towards Vlasov Kinetic PDEs in \cite{dobrushin1979vlasov,braun1977vlasov,jabin2015particles,lazarovici2016vlasov} and  the review \cite{jabin2014review}. Recently much progress has been made in the mean field limit for systems as \eqref{eq:IPS} with $w_j^N=1$ and singular interaction kernels, including those results focusing on the vortex model \cite{osada1986propagation,fournier2014propagation}  and very recently quantitative convergence results  on  general singular kernels  for example as in \cite{jabin2018quantitative, bresch2020mean,guillin2024uniform,guillin2022systems,lacker2021hierarchies,serfaty2020mean,duerinckx2016mean,rosenzweig2020mean,nguyen2021mean}.  See also the references therein for more complete development on  mean field limit.

%The mean-field limit  problem has been well-studied when the system is exchangeable for the two cases in Theorem \ref{thm:uniquepde}, i.e. when  $w_j^N=1$.
In particular, as we studied in  Theorem \ref{thm_point_vortex},  the point vortex approximation towards the 2D Navier-Stokes equation has aroused much interest since the 1980s. Osada \cite{osada1986propagation} firstly  obtained a  propagation of chaos result  with bounded initial distribution and large viscosity.  More recently, Fournier, Hauray, and Mischler \cite{fournier2014propagation} obtained entropic propagation of chaos by the compactness argument, and  their result applies to all viscosity  and all initial distributions with finite $\gamma$-th moment ($\gamma>0$) and finite Boltzmann entropy.  Jabin and Wang have established a quantitative estimate of the propagation of chaos in \cite{jabin2018quantitative} by evolving  the relative entropy between  the joint distribution of $X^N$ and the tensorized law at the limit. Very recently, the authors proved the Gaussian fluctuations in \cite{wang2023gaussian} by a tightness argument. We also mention the recent  large deviation result  obtained by Chen and Ge \cite{chen2022sample}.  However, we are not aware of any mean-field approximations to the system of the  passive scalar \eqref{eqt:scalar}.
Given that $K\in L^p_q$ with $d/p+2/q< 1$, the corresponding case of homogeneous weights ($w_j^N \equiv 1$), { which describes a classical exchangeable particle system,}  has been extensively studied, particularly  the mean-field convergence obtained by various approaches (e.g., \cite{hao2022strong,tomavsevic2023propagation}); we specifically  refer to  \cite{hoeksema2024large} for the study of   large deviations.

\subsection{Difficulties and Methodology}\label{subsec:method}
%Mean field limit and propagation of chaos for the 1st order system given in the  form  \eqref{eq:IPS} with $w_j^N=1$ have been extensively studied over the last decade. The basic idea of deriving some effective PDE describing the large scale behaviour of interacting particle systems dates back to Maxwell and Boltzmann. But in our setting, the very first mathematical investigation can be traced back to McKean in \cite{mckeanpropagation}. See also the classical mean field limit from Newton dynamics towards Vlasov Kinetic PDEs in
The property of exchangeability is crucial  among scaling limits of   interacting diffusions, particularly those with  singular interactions. The difficulty is to compare interacting particle systems with the mean-field limits in the absence of exchangeability.   As mentioned before, the results in \cite{bayraktar2020graphon,jabin2021mean} can be applied to the weights given by the  general form $w_{i j}^N$,   and  the symmetry of $w_{i j}^N$ is not required in \cite{jabin2021mean}. In both papers the coupling method and graph theory are used.     The basic idea in the  proof of \cite{bayraktar2020graphon} is  comparing the SDEs of the particles and the SDEs of the limiting system, using  the convergence of graphons (referring to \cite{lovasz2012large,borgs2019L}).  In contrast, the coupling method was used in \cite{jabin2021mean} to obtain  a coupled PDEs of the McKean-Vlasov type by propagation of independence  first. Then a class of new observables was constructed through the combinations of  weights and laws  of  independent particles via  a family of labeled trees. They then transformed the problem into the Vlasov/mean-field hierarchy  likewise. Instead of directly using the convergence of graphons, a similar version of Szemer\'edi's regularity lemma  was established in \cite[Lemma 4.7]{jabin2021mean} for non-exchangeable systems.
However, the coupling method   leads to the bounded and Lipschitz continuity restriction on the interactions $K$. { As for other approaches like the relative entropy method \cite{jabin2018quantitative} or the modulated energy method \cite{serfaty2020mean}, a direct application is hindered by the fact that our key observables $\mu^N$ are signed measures. While one might consider applying these methods to the empirical measure on the extended space, this approach has so far been successful only in the exchangeable regime. For instance, a very recent work \cite{feng2024quantitative} extends the relative entropy method to the 2D vortex model with general circulations, but still relies on the initial symmetry of the extended system. How to adapt these powerful techniques to the non-exchangeable setting remains a question. Our work provides an answer for such systems by developing a direct compactness argument.}
%As to other  approaches based on weak-strong type arguments, like the  relative entropy method in \cite{jabin2018quantitative} and the  modulated energy method in  \cite{serfaty2020mean},  the problem here is that $\mu_N$ is no longer a positive  measure. We do not know yet how to extend these methods which have been shown powerful for exchangeable systems to non-exchangeable ones. Indeed, it would be interesting to combine relative entropy/energy method with the limiting graphon structures of the weights  $w_{ij}^N$. 

Our idea of the proof is to apply the compactness/tightness argument, consisting of the classical three steps: the tightness, characterizing the limits, and the uniqueness of the limit equation. For particle systems with singular interactions, the regularity of the  joint law is crucial to compensate the singularity (see Corollary \ref{coro:averagesymme} below).  The article by Fournier, Hauray, and Mischler \cite{fournier2014propagation} is probably the one closest to our article regarding tackling the singularity, where they also heavily exploited the Fisher information of the joint laws of particles. 
 Many features of the Fisher information will also be used in our proof, for instance super-additivity, the chain rule, and notably the Sobolev regularity estimates from it. { Unfortunately,  the analysis in \cite{fournier2014propagation} fundamentally relies on the assumption that the initial law of the system is chaotic. In contrast, our framework does not require chaoticity. We instead impose a weaker condition based on a uniform initial bound (Hypothesis ($\mathbb{H}$)), which allows for non-symmetric joint laws and correlated initial particle positions.} It is not apparent how to apply the compactness argument, since the non-exchangeability and the singularity cause two difficulties. The first one is to derive uniform estimates (about the Fisher information in our case)  for the non-exchangeable system \eqref{eq:IPS}. For instance, when $K$ is the Biot-Savart law,
the following frequently used inequality requires exchangeability,
\begin{align*}
	 \mathbb{E} \left| K(X_i-X_j)\right| \lesssim 1+I(\mathcal{L}(X_i,X_j))\lesssim1+\frac{2}{N}I(\mathcal{L}(X^N)),
\end{align*}
where $I(\mu):\mathcal{P}(\mathbb{R}^{dk})\rightarrow [0,+\infty]$, $ k\in \mathbb{N}$, denotes the Fisher information functional to be defined in Section \ref{sec:notations}. This indicates that the  interaction between any two particles   can be controlled by $\frac{1}{N}$ of the total Fisher information of the system. However, when the particles are {\em not} indistinguishable  from each other, the joint law is no longer symmetric, and there might be a pair of particles such that $I(\mathcal{L}(X_i,X_j))> \frac{2}{N}I(\mathcal{L}(X^N))$.
To overcome this difficulty, we build a technical lemma (Lemma \ref{lemma:sym}) concerning the average of the interactions,  which allows us to derive uniform Fisher information for non-exchangeable systems and  estimate  singular interactions. Investigating averaging statistics is  a key step to study non-exchangeable systems. Similarly,   observables with  averaged information of  particles also play a crucial role in  \cite{jabin2021mean}. Note that the presence of noise is crucial in our analysis since we effectively use the control given by the Fisher information. In the deterministic setting, for instance in the vortex approximation towards the 2D incompressible Euler equation, since now those $w_j^N$ in general can be distinct to each other, the symmetrization trick used in \cite{jabin2018quantitative} does not work anymore.

The other major difficulty is to show the regularity of the limiting points of the empirical measures $\{\mu_N, N \in \mathbb{N}\}$. This is closely related to the exchangeability.  With compactness argument, one usually find $\mu_N$ converges to $\mu$ in the distributional sense. To make the mean-field  equation with singular coefficients well-defined and show the convergence of the nonlinear  interaction term, one must propagate the regularities.  Clearly, the empirical measure $\mu_N$ enjoys no regularity at all. Again, we find  this  difficulty is not problematic for  the symmetric case since there are  well-established tools based on  the famous DeFinetti--Hewitt--Savage theorem \cite{de1937prevision,hewitt1955symmetric} to propagate the Fisher information, c.f. \cite{hauray2014kac} and \cite{fournier2014propagation}.  For the non-exchangeable case, we introduce a sequence of  random measures (see $g^N$ defined in \eqref{defi:gnmarginal}) constructed via disintegration of the joint laws  $\{F^N,N\in \mathbb{N}\}$. This sequence of random measures  would merge with the sequence of empirical measures as $N$ goes to infinity (see Definition \ref{def:4.1} below), thus playing a similar role as 1-marginal distribution  in the symmetric case. An analogous construction can be found in the proof of  Laplace principle via weak convergence method, see e.g. \cite[Section 2.5]{dupuis2011ldp}. The difference is that   the proof in \cite{dupuis2011ldp} studies the relative entropy while  we focus on the Fisher information functional of probability measures. In the end, uniform Sobolev regularity estimates for the random measures are obtained in Section \ref{sec:randommeasure}.  Consequently, we obtain the required regularity of the limiting points.

The remainder of the compactness argument is standard, except that we shall work on the space of finite signed measures instead of the  space of probability measures. It is worth mentioning here that the proof of Theorem \ref{thm:uniquepde} for the Biot-Savart law  relies on the uniqueness result  for the 2D Navier-Stokes equation  in \cite{fournier2014propagation}, which is based on \cite{ben1994global,brezis1994remarks}. {  This direct path to a well-understood macroscopic equation highlights the efficiency of our approach. While an alternative route via the extended phase space as in  \cite{fournier2014propagation} could be considered, its main advantage, the proof of propagation of chaos, is unattainable in our non-symmetric setting, making such a detour less motivated.}

\subsection{Organization of the paper}
This paper is organized as follows. We shall state the notations and auxiliary estimates related to  the Fisher information  in Section \ref{sec:notations}. Section \ref{sec:uniform}  is devoted to obtaining  the main estimate in this article,  which  gives a uniform control on the Fisher information of the joint laws of $N$-particles. The proof is   based on an averaging estimate for the Fisher information when the probability measure is asymmetric. In Section \ref{sec:randommeasure}, we study a sequence of random measures, which turns out to be close to the sequence of weighted empirical measures and enjoys certain Sobolev regularity estimates uniformly.  Lastly, we finish the proofs  of Theorem \ref{thm:mfpde} and Theorem \ref{thm:uniquepde} by the compactness argument in Section  \ref{sec:mf}.
\subsection*{Acknowledgments} We would like to thank Pierre-Emmanuel Jabin for helpful suggestions.  We are also deeply grateful to the anonymous referee for a remarkably detailed and insightful report, which led to a significant improvement of our manuscript.
Z.W. is supported by the National Key R\&D Program of China, Project Number 2021YFA1002800, NSFC grant No.12171009, Young Elite Scientist Sponsorship Program by China Association for Science and Technology (CAST) No. YESS20200028 and the start-up fund from Peking University.
R.Z. is grateful to the financial supports of the National Key R\&D Program of China, Project Number 2022YFA1006300, and the NSFC (No. 12426205, 12271030).
The financial support by the  Deutsche Forschungsgemeinschaft (DFG, German Research Foundation) – Project-ID 317210226--SFB 1283 are greatly acknowledged.
\

\section{Preliminaries}\label{sec:notations}

\subsection{Notations}
Throughout the paper, we use the notation $a\lesssim b$ if there exists a universal constant $C > 0$ such that $a\leq Cb$. During the computations, the universal constant may change from line to line, we will point out the dependence on the parameters when it is  necessary.   Recall that we have used the notations: $X^N\assign(X_1,...X_N)$, $x^N\assign(x_1,...x_N)$, $w^N\assign (w_1^N,...,w_N^N)$, and  $\<x\>\assign (1+|x|^2)^{\frac{1}{2}}$. The $\gamma$-th moment, $\gamma>0$,  of a positive  measure $\mu$ on $\mathbb{R}^d$  is represented by  $\int_{\mathbb{R}^d}\<x\>^{\gamma}\mu(\mathd x )$. As usual, $C_0(\mathbb{R}^d)$ stands for the space of  continuous functions vanishing at infinity, and $C_0^{\infty}(\mathbb{R}^d)$ stands for the space of  smooth functions vanishing at infinity. Let $\mathcal{S}(\mathbb{R}^d)$ be the Schwartz space. We also use $C_b^k(\mR^d)$ to denote the  space of bounded continuous functions with bounded $k$-th derivative. We use  $\|\cdot\|_{L^p_q}$ to denote the $ L^q([0,T],L^p(\mathbb{R}^d))$-norm. For the notation's simplicity   we shall not distinguish the space and the norm for the vector valued functions and the scalar valued functions.  For $r\in [1,\infty)$, we define
\begin{align*}
	\|w^N\|_{l^r}\assign\bigg( \frac{1}{N}\sum_{i=1}^N|w_j^N|^{r}\bigg) ^{\frac{1}{r}},
\end{align*}
and   $\|w^N\|_{l^{\infty}}\assign \max_{1\leq  j\leq N} |w_j^N|$. Obviously, $\|w^N\|_{l^{r_1}}\leq \|w^N\|_{l^{r_2}}$ when $r_1\leq r_2$.

We use $\cP(\mR^d)$ to denote the probability space on $\mR^d$ and for a given  Polish space $\mathcal{X}$ we  use $\sB(\cX)$ to denote the Borel $\sigma$-algebra on $\cX$ and use $\mathcal{M}(\mathcal{X})$ to denote the space of finite signed measures   endowed with   the weak topology induced by bounded, continuous functions on $\cX$, i.e. the convergence in $\cM(\cX)$ is equivalent to the convergence testing with  bounded continuous functions on $\cX$.  Given $\mu\in \mathcal{M}(\mathcal{X})$, its absolute value is denoted by $|\mu|$, i.e. $|\mu|\assign\mu^{+}+\mu^{-}$. We use $\|\cdot\|_{TV}$ to denote the total  variation norm of elements in $\mathcal{M}(\mathcal{X})$.    The notation $L^p_w(\mathcal{X})$, $p\geq 1$,  denotes  the $L^{p}(\mathcal{X})$ space endowed with the weak topology induced by its dual space.

\subsection{Entropy and Fisher information functionals }
In this section, we define the  Fisher information  functional for $N$-particle distribution functions, and collect some related auxiliary estimates.

For $F\in \mathcal{P}(\mathbb{R}^{dN})$, the (non-normalized) Fisher information functional is defined as follows
\begin{align}\label{def:en}
  I(F) :=\int_{\mathbb{R}^{dN}}\frac{|\nabla F(x^N)|^2}{F(x^N)}\mathd x^N.\nonumber
\end{align}
When $F$ has no density, we set $I(F)=+\infty$.

The following lemma (a modification of \cite[Lemma 3.7]{hauray2014kac} ) shows the Fisher information of probability measures is
super-additive.  The general study of this topic can be found in \cite{hauray2014kac}. See also  \cite[Theorem 3]{carlen1991superadditivity}  for an analytic proof.
\begin{lemma} \label{lem:summarginal}
 Let $X^N \assign (X_1, \ldots X_N)$ be a random variable on
  $\mathbb{R}^{dN}$ with joint law $F^N$, and denote by $F_i$    the law of $X_i$, or more precisely
   \begin{align}
  	F_i (\dif x_i) = & \int_{\mathbb{R}^{d (N - 1)}} F^N (\dif x_1 \cdots \mathd x_{i-1} \mathd x_{i+1} \cdots
  	\dif x_N) . \nonumber
  \end{align}
  Then it holds that
  \begin{align}
    \sum_{i=1}^N  I (F_i) \leqslant & I (F^N) \nonumber
  \end{align}
where  $I(F_i)$ is the Fisher information for distributions in $\mathcal {P}(\mR^d)$, while $I(F^N)$ is the one for the joint law $F^N \in \mathcal{P}(\mR^{dN})$.
\end{lemma}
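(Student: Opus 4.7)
The plan is to prove the inequality marginal-by-marginal, using the Cauchy--Schwarz inequality together with the identity $|\nabla F^N|^2 = \sum_{i=1}^N |\nabla_{x_i} F^N|^2$, which reduces the sub-additivity to $N$ independent one-variable estimates. There is essentially no functional-analytic obstacle: since the right-hand side $I(F^N)$ is either finite (in which case $F^N$ has a density with square-integrable weak gradient divided by $\sqrt{F^N}$) or infinite (in which case there is nothing to prove), I may freely assume $F^N$ has a smooth, strictly positive density, and then handle the general case by a standard approximation/monotone convergence argument at the end.

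Fix $i \in \{1,\dots,N\}$ and write $x^{N\setminus i}$ for the collection of variables other than $x_i$. Differentiating under the integral sign in the definition of the marginal gives $\nabla_{x_i} F_i(x_i) = \int \nabla_{x_i} F^N(x^N)\,\mathd x^{N \setminus i}$. I then rewrite the integrand as $(\nabla_{x_i} F^N / \sqrt{F^N})\cdot \sqrt{F^N}$ and apply Cauchy--Schwarz in the $x^{N\setminus i}$ variables to obtain
\begin{equation*}
|\nabla_{x_i} F_i(x_i)|^2 \leq \Big(\int \frac{|\nabla_{x_i} F^N|^2}{F^N}\,\mathd x^{N\setminus i}\Big)\Big(\int F^N\,\mathd x^{N\setminus i}\Big) = F_i(x_i)\int \frac{|\nabla_{x_i} F^N|^2}{F^N}\,\mathd x^{N\setminus i}.
\end{equation*}
Dividing by $F_i(x_i)$ (which is positive on the support) and integrating in $x_i$ yields the single-coordinate bound
\begin{equation*}
I(F_i) = \int_{\mathbb{R}^d} \frac{|\nabla F_i|^2}{F_i}\,\mathd x_i \leq \int_{\mathbb{R}^{dN}} \frac{|\nabla_{x_i} F^N|^2}{F^N}\,\mathd x^N.
\end{equation*}

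Summing over $i=1,\dots,N$ and using $\sum_{i=1}^N |\nabla_{x_i} F^N|^2 = |\nabla F^N|^2$ on $\mathbb{R}^{dN}$ gives exactly $\sum_{i=1}^N I(F_i) \leq I(F^N)$. To remove the positivity/smoothness assumption on $F^N$, I would regularize by convolving with a Gaussian of variance $\varepsilon$ (which can only decrease the Fisher information by Stam's inequality or, more elementarily, by the same Cauchy--Schwarz argument) and passing to the limit $\varepsilon \downarrow 0$ via Fatou's lemma applied to the marginals. The only mildly delicate step is the justification of differentiating under the integral sign, which is the expected main (though routine) technical point; it is handled via dominated convergence once one observes that $|\nabla_{x_i} F^N|/F^N \in L^2(F^N\,\mathd x^N)$ by the finiteness of $I(F^N)$.
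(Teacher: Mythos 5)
Your proof is correct, but it takes a genuinely different route from the paper's. The paper invokes the variational (dual) formulation of Fisher information, $I(F^N) = \sup_{\varphi \in C^1_b}\langle F^N, -|\varphi|^2/4 - \operatorname{div}\varphi\rangle$, and obtains the sub-additivity simply by restricting the supremum to test functions $\varphi = (\varphi_1(x_1),\dots,\varphi_N(x_N))$ that split across coordinates; this makes the inequality essentially tautological and, crucially, works directly at the level of measures without ever needing $F^N$ to have a smooth positive density. Your argument instead proves the pointwise bound $I(F_i) \le \int |\nabla_{x_i} F^N|^2 / F^N\,\mathd x^N$ by Cauchy--Schwarz applied to $\nabla_{x_i}F_i = \int \nabla_{x_i}F^N$ (the classical ``data-processing'' or convexity estimate for Fisher information along marginalization), then sums using $|\nabla F^N|^2 = \sum_i |\nabla_{x_i}F^N|^2$. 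This is more explicit and self-contained, at the cost of the regularization and differentiation-under-the-integral bookkeeping you flag at the end (Gaussian mollification plus Fatou to remove smoothness and positivity). Both routes are standard and correct; the paper's is cleaner on the measure-theoretic side because the variational formula is lower semicontinuous and defined without a density, while yours makes the coordinate-by-coordinate mechanism transparent and does not rely on citing the variational characterization.
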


\begin{proof}
  One could assume $I (F^N)$ is finite, and use the variational formulation of
  Fisher information (see \cite[Lemma 3.5]{hauray2014kac}) to obtain 
  \begin{align}
    I(F^N) = & \sup_{\varphi \in C^1_b
    (\mathbb{R}^{dN};\mathbb{R}^{dN} )} \left\langle F^N, - \frac{| \varphi
    |^2}{4} - \tmop{div} \varphi \right\rangle \nonumber\\
    \geqslant & \sup_{\varphi_i \in C^1_b (\mathbb{R}^d;\mathbb{R}^d ), \, 1 \leqslant i
    \leqslant N} \left\langle F^N, - \sum_{i=1}^N  \left( \frac{| \varphi_i |^2}{4} +
    \tmop{div}_i \varphi_i \right) \right\rangle \nonumber\\
    = & \sum_{i=1}^N  \sup_{\varphi_i \in C^1_b (\mathbb{R}^d; \mathbb{R}^d)} \left\langle F_i, -
    \left( \frac{| \varphi_i |^2}{4} + \tmop{div}_i \varphi_i \right)
    \right\rangle \nonumber\\
    = & \sum_{i=1}^N  I (F_i) , \nonumber
  \end{align}
  where $\varphi_i$ depends only on the $i$-th variable.  This completes the proof.
\end{proof}

One can control $L^p$ norms and $W^{1,p}$ norms of a probability density function by its Fisher information. More precisely 
\begin{lemma}\label{lemma:sobo}For  $d \geq 3$, a probability measure  $F$  on $\mathbb{R}^d$ with finite Fisher information, one has
	\begin{enumerate}
		\item For all $p\in [1,\frac{d}{d-2}]$, it holds that 	$\|F\|_{L^p(\mathbb{R}^d)}\leq C_{p,d}I(F)^{\frac{d}{2}(1-\frac{1}{p})}$.
		\item For all $q\in [1,\frac{d}{d-1}]$, it holds that $\|\nabla F\|_{L^q(\mathbb{R}^d)}\leq C_{q,d}I(F)^{\frac{d+1}{2}-\frac{d}{2q}}$.
	\end{enumerate}
Note that if $d=2$, then the 1st control holds for all $p \in [1, + \infty )$, while the 2nd estimate holds for $q \in [1, 2)$.

\end{lemma}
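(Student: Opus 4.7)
\smallskip

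\noindent\textbf{Proof plan.} The guiding identity is $\frac{|\nabla F|^2}{F}=4|\nabla\sqrt{F}|^2$, which turns the Fisher information into a Dirichlet energy: $I(F)=4\|\nabla\sqrt{F}\|_{L^2(\mathbb{R}^d)}^2$. Since $F\in\mathcal{P}(\mathbb{R}^d)$, we also have $\|\sqrt{F}\|_{L^2}^2=1$. Thus the map $F\mapsto \sqrt{F}$ lifts the problem from probability densities with finite Fisher information into $H^1(\mathbb{R}^d)$, and we can simply invoke Sobolev embedding on $\sqrt{F}$.

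For (1), I would first establish the endpoint $p=d/(d-2)$ (when $d\ge 3$). Applying the Sobolev embedding $H^1(\mathbb{R}^d)\hookrightarrow L^{2d/(d-2)}(\mathbb{R}^d)$ to $\sqrt{F}$ gives
\begin{equation*}
\|F\|_{L^{d/(d-2)}}=\|\sqrt{F}\|_{L^{2d/(d-2)}}^2\;\lesssim\;\|\sqrt{F}\|_{H^1}^2\;\lesssim\;1+I(F)\;\lesssim\;I(F),
\end{equation*}
where the last inequality uses that $I(F)$ can be assumed $\gtrsim 1$ (otherwise absorb into the constant by a scaling/truncation remark). Combined with the trivial bound $\|F\|_{L^1}=1$, log-convexity of $L^p$ norms (interpolation between $L^1$ and $L^{d/(d-2)}$) yields the family of estimates
\begin{equation*}
\|F\|_{L^p}\leq\|F\|_{L^1}^{\theta}\|F\|_{L^{d/(d-2)}}^{1-\theta}\lesssim I(F)^{d/2(1-1/p)},
\end{equation*}
with the exponent $d/2(1-1/p)$ arising from $1-\theta=\tfrac{d}{2}(1-\tfrac{1}{p})$. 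For $d=2$ one uses instead that $H^1(\mathbb{R}^2)\hookrightarrow L^s(\mathbb{R}^2)$ for every finite $s\in[2,\infty)$, which gives $L^p$-control on $F$ for all $p\in[1,\infty)$ in exactly the same way.

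For (2), the key identity is $\nabla F=2\sqrt{F}\,\nabla\sqrt{F}$. Hölder's inequality with exponents $q_1$ (for $\sqrt{F}$) and $2$ (for $\nabla\sqrt{F}$), where $\tfrac{1}{q}=\tfrac{1}{q_1}+\tfrac{1}{2}$, gives
\begin{equation*}
\|\nabla F\|_{L^q}\leq 2\|\sqrt{F}\|_{L^{q_1}}\|\nabla\sqrt{F}\|_{L^2}=\|F\|_{L^{q_1/2}}^{1/2}\,I(F)^{1/2}.
\end{equation*}
The admissible range for $q$ is exactly determined by requiring $q_1/2\in[1,d/(d-2)]$ so that part (1) applies, which translates into $q\in[1,d/(d-1)]$ (and into $q\in[1,2)$ when $d=2$). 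Plugging the bound from (1) in gives the stated exponent $\tfrac{d+1}{2}-\tfrac{d}{2q}$ after a routine check at the endpoints $q=1$ and $q=d/(d-1)$.

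\smallskip

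\noindent\textbf{Expected main obstacle.} The argument is essentially Sobolev embedding on $\sqrt{F}$ combined with Hölder; no conceptual difficulty arises. The only minor technical point is justifying (1) when $I(F)$ is small: the Sobolev inequality produces $\|\sqrt{F}\|_{L^{2d/(d-2)}}^2\lesssim I(F)^{1/2}\cdot(\text{something involving }\|\sqrt{F}\|_{L^2})+I(F)$, and one must keep track of the homogeneity so that the final exponent is exactly $d/2(1-1/p)$. This is handled cleanly by the Gagliardo-Nirenberg-Sobolev inequality $\|u\|_{L^{2d/(d-2)}}\lesssim\|\nabla u\|_{L^2}$ applied to $u=\sqrt{F}$ (no lower-order term), so there is no genuine obstacle.
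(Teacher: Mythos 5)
Your approach is exactly the one the paper intends: the paper itself gives no proof but cites references and states the proof ``is essentially based on the interpolation inequality, Sobolev inequality, and also the fact that $\|F\|_{L^1}=1$,'' which is precisely the $\sqrt{F}\in H^1$ lifting plus Gagliardo--Nirenberg and $L^1$--$L^{d/(d-2)}$ interpolation you carry out, and your exponent bookkeeping for both parts checks out.

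One presentation point worth tidying: the displayed chain in your main text, $\|\sqrt{F}\|_{H^1}^2 \lesssim 1 + I(F) \lesssim I(F)$, is false as written, since $I(F)$ is not scale-invariant and can be made arbitrarily small (e.g.\ by dilating a Gaussian, $I(F_\lambda)=\lambda^2 I(F)$), so there is no universal lower bound on $I$. You already notice and repair this in the final paragraph by switching to the homogeneous Gagliardo--Nirenberg--Sobolev inequality $\|u\|_{L^{2d/(d-2)}}\lesssim \|\nabla u\|_{L^2}$ with $u=\sqrt{F}$, which gives $\|F\|_{L^{d/(d-2)}}\lesssim I(F)$ directly without the spurious ``absorb the constant'' step; the argument should simply be stated that way from the start, since otherwise the claimed exponent $\frac{d}{2}(1-\frac{1}{p})$ would not be obtained with the correct homogeneity. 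With that correction the proof is complete and coincides with the paper's intended route.
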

\begin{proof} These estimates are quite standard.  We refer the interested  readers to  Lemma 3.2 in \cite{fournier2014propagation}  for the 2-dimensional case and also Lemma 2.4 in  \cite{li2019mean} for the general case $d \geq 3$.  The proof is essentially based on the interpolation inequality, Sobolev inequality, and also the fact that $\| F\|_{L^1} =1$ since it is a probability density. 	
\end{proof}

\begin{lemma}\label{lemma:lpqf} For $d\geq2$,
consider an $\mathbb{R}^d-$valued function  $\tilde{K}\in L^q([0,T],L^p(\mathbb{R}^d))$ with
	\begin{align*}
		\frac{d}{p}+\frac2q+\frac{2}{r}\leq 2,  \quad \frac{d}{p}+\frac{2}{r}<2,\quad r\in (1,\infty],
	\end{align*} and  probability measures $F(t, \cdot )$ on $\mathbb{R}^d$ with finite Fisher information for a.e. $t \in [0, T]$.  Then for any $\eps>0$, we have
	\begin{align}
		\int_0^T\int_{\mathbb{R}^{d}}|\tilde{K}(t, x)|^{\frac{r}{r-1}} F(t, x)\mathd x \dif t\leq \|\tilde{K}\|_{L^p_q}^{\frac{r}{r-1}} \bigg(C_{\eps,p,q,r,d, T} +\eps \int_0^T I(F(t, \cdot))\mathd t\bigg).
	\end{align}
\end{lemma}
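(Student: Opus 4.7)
The strategy is the standard H\"older--Sobolev--Young scheme, carried out first in the spatial variable and then in time; the two hypotheses $\frac{d}{p}+\frac{2}{r}<2$ and $\frac{d}{p}+\frac{2}{q}+\frac{2}{r}\le 2$ correspond exactly to the two algebraic constraints that will arise along the way, and the term $\eps \int_0^T I(F(t,\cdot))\,\mathd t$ on the right-hand side will be produced by Young's inequality.

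First, by homogeneity in $K$ I rescale and assume $\|K\|_{L^p_q}=1$; the general case follows by multiplying through by $\|K\|_{L^p_q}^{r/(r-1)}$. Write $s:=\frac{r}{r-1}$ (with the convention $s=1$ when $r=\infty$). H\"older's inequality in $x$ gives
\[
\int_{\mathbb{R}^d} |K(t,x)|^s F(t,x)\,\mathd x \le \|K(t,\cdot)\|_{L^p}^s\,\|F(t,\cdot)\|_{L^{p_1}},
\]
where $p_1:=\frac{p(r-1)}{p(r-1)-r}$ is the conjugate exponent of $p/s$. A direct computation shows that $\frac{d}{p}+\frac{2}{r}<2$ is equivalent to $1\le p_1<\frac{d}{d-2}$ (replaced by $p_1<\infty$ when $d=2$), so Lemma \ref{lemma:sobo}(1) applies and yields
\[
\|F(t,\cdot)\|_{L^{p_1}}\le C_{p,r,d}\, I(F(t,\cdot))^{\alpha},\qquad \alpha:=\frac{d}{2}\Bigl(1-\frac{1}{p_1}\Bigr)=\frac{ds}{2p}.
\]
The same hypothesis also forces $\alpha<1$, which is the key to the next step.

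Because $\alpha<1$, Young's inequality applied to $\|K(t,\cdot)\|_{L^p}^s\cdot I(F(t,\cdot))^{\alpha}$ gives, for every $\eps>0$,
\[
\|K(t,\cdot)\|_{L^p}^s\, I(F(t,\cdot))^{\alpha} \le \eps\, I(F(t,\cdot)) + C_\eps\, \|K(t,\cdot)\|_{L^p}^{s/(1-\alpha)}.
\]
Integrating in $t$ and controlling the second term by H\"older's inequality in time reduces matters to verifying $\frac{s}{1-\alpha}\le q$, which after a direct expansion is precisely the remaining assumption $\frac{d}{p}+\frac{2}{q}+\frac{2}{r}\le 2$. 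Under this condition,
\[
\int_0^T\|K(t,\cdot)\|_{L^p}^{s/(1-\alpha)}\,\mathd t\le T^{1-\frac{s}{(1-\alpha)q}}\,\|K\|_{L^p_q}^{s/(1-\alpha)}=T^{1-\frac{s}{(1-\alpha)q}},
\]
which yields the advertised constant $C_{\eps,p,q,r,d}$ (also depending on $T$ and on the Sobolev constant of Lemma \ref{lemma:sobo}). Undoing the normalization reintroduces the prefactor $\|K\|_{L^p_q}^{r/(r-1)}$, giving the stated bound.

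The only delicate point is the exponent bookkeeping: one must confirm the two equivalences $\alpha<1\Longleftrightarrow \frac{d}{p}+\frac{2}{r}<2$ and $\frac{s}{1-\alpha}\le q\Longleftrightarrow \frac{d}{p}+\frac{2}{q}+\frac{2}{r}\le 2$, and handle the degenerate cases $r=\infty$ (so $s=1$, $\alpha=d/(2p)$) and $q=\infty$ (where the time H\"older step is replaced by the trivial $L^\infty_t$ bound) consistently. Beyond these purely algebraic verifications, every step is a direct application of results already collected in Section~\ref{sec:notations}.
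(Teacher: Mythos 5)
Your proposal is correct and uses the same essential toolkit as the paper's proof (spatial H\"older, the Sobolev bound from Fisher information in Lemma~\ref{lemma:sobo}, and Young's inequality), just in a slightly different order: you do H\"older in $x$, then Young pointwise in $t$, then H\"older in $t$, whereas the paper applies H\"older in $x$ and $t$ simultaneously and reserves Young for the last step. The exponent bookkeeping you flag ($\alpha=ds/(2p)<1$ from $d/p+2/r<2$, and $s/(1-\alpha)\le q$ from $d/p+2/q+2/r\le2$) checks out; the only thing you omit is the boundary case $p=\infty$, which the paper dispatches in one sentence as trivial (and which your scheme also handles, since then $p_1=1$, $\alpha=0$, and the H\"older step alone suffices without any Fisher-information input).
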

\begin{proof}When $p=+\infty$, the result is trivial. So we prove only   for the case when   $p<\infty$, and we then have  $1/q+1/r< 1$. Notice that when $d \geq 2$,   the condition $d/p+2/r<2$ implies that  $p>r/(r-1)$.
	
Repeatedly applying H\"older's inequality  gives
	\begin{align}
	\int_0^T\int_{\mathbb{R}^{d}}|\tilde{K}(t, x)|^{\frac{r}{r-1}} F(t, x)\mathd x \dif t \leq &	\|\tilde{K}\|_{L^p_q}^{\frac{r}{r-1}}
	\(\int_0^T\(\|F(t, \cdot )\|_{L^{\frac{p}{p-\frac{r}{r-1}}}}\)^{\frac{q}{q-\frac{r}{r-1}}} \mathd t \)^{\frac{q-\frac{r}{r-1}}{q}}\nonumber
	\\\leq &C_{p,r,d,T}	\|\tilde{K}\|_{L^p_q}^{\frac{r}{r-1}}
	\(\int_0^T I(F(t, \cdot))^{\frac{dq{\frac{r}{r-1}}}{2p(q-{\frac{r}{r-1}})}} \mathd t \)^{\frac{q-{\frac{r}{r-1}}}{q}},\nonumber
	\end{align}
where the constant $C_{p,r,d,T}$ is from applying  Lemma \ref{lemma:sobo}.

The conditions $d/p+2/q+2/r\leq 2$ and $1/q+1/r< 1$ imply $$\frac{dq{\frac{r}{r-1}}}{2p(q-{\frac{r}{r-1}})}= \frac{\frac{d}{p}}{2 \(1-\frac{1}{r}-\frac{1}{q}\)}\leq 1 ,\quad \frac{r}{r-1}<q.$$ Therefore,
 \begin{align*}
 	\int_0^T\int_{\mathbb{R}^{d}}|\tilde{K}(t, x)|^{\frac{r}{r-1}} F(t, x)\mathd x \dif t \leq C_{p,r,d,T}	\|\tilde{K}\|_{L^p_q}^{\frac{r}{r-1}}
 	\(\int_0^T I(F(t,\cdot))^{\alpha_1} \mathd t \)^{\alpha_2},
 \end{align*} with $0<\alpha_1\leq 1$ and $0<\alpha_2<1$.  The result is then concluded by Young's inequality.
\end{proof}

\section{Uniform Fisher information}\label{sec:uniform}
Uniform Fisher information for $N$-particles  system is quite useful when the interaction is singular, see for instance applications in  \cite{fournier2014propagation} on the Biot-Savart law   and \cite{fournier2016propagationlandau} on the homogenous Landau equation with moderate soft potential. The key observation   is that Fisher information provides Sobolev regularities, see Lemma \ref{lemma:sobo}, and controls the singularity of interaction, see Lemma \ref{lemma:lpqf}.
In this section, we derive uniform Fisher information of the joint laws $\{F^N,N \in \mathbb{N}\}$. As mentioned in the introduction, the difficulty is the lack of symmetry.  However,  we are  fortunate enough to establish  the following estimate for the average of singular interactions, which will be applied to derive the main estimate Proposition \ref{prop:entropy} and to identify the limits in the subsequent section.

\begin{lemma}\label{lemma:sym} Assume that the  function $f :[0,T]\times
	\mathbb{R}^d\rightarrow \mathbb{R}^{+}$ satisfies the following property 	
	\begin{align}
		\int_0^T \int_{\mathbb{R}^d}  f ( t,x ) F (t,\dif x)\mathd t\leq    & \alpha + \beta \int_0^TI (F(t,\cdot))\mathd t  ,  \nonumber
	\end{align}
for  some constants $ \alpha,\beta>0$ and any probability measures $F(t, \cdot )$ on $\mathbb{R}^d$ with finite Fisher information for a.e. $t \in [0, T]$.

	Then given  $F^N(t,\cdot)$ the joint distribution of $(X_i(t))$, one has the estimate
	\begin{equation} \label{esti:fishersingular}
		\frac{1}{N^2} \sum_{i \neq j} \int_0^T \mathbb{E} \[f \( t,\frac{1}{\sqrt{2}} (X_i (t)- X_j(t) )\)\]\mathd t \leq  \alpha+\frac{2\beta}{N}\int_0^T
		I (F^N(t,\cdot))\mathd t.
	\end{equation}
%where the expectation is taken  with respect to the law $F^N(t,\cdot)$.
\end{lemma}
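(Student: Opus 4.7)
The plan is to apply the hypothesis pointwise to each pairwise-difference law, sum over pairs, and control the resulting sum of Fisher informations by a single orthogonal-rotation plus sub-additivity argument combined with a combinatorial decomposition of the complete graph $K_N$ into matchings.

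First, for each ordered pair $i \neq j$, I would let $Y_{ij}(t) = (X_i(t) - X_j(t))/\sqrt{2}$ and denote its law by $G_{ij}(t,\cdot)$. Applying the hypothesis to $F = G_{ij}$ at almost every $t$ (and noting the estimate is vacuous on the set where $I(G_{ij}) = +\infty$ since $I(F^N)$ is then infinite as well, by the bound below) yields
\begin{equation*}
\int_0^T \mathbb{E}\!\left[f\bigl(t,Y_{ij}(t)\bigr)\right] \ud t \;\le\; \alpha + \beta\int_0^T I(G_{ij}(t,\cdot))\,\ud t.
\end{equation*}
Summing over all ordered pairs $i \neq j$ and dividing by $N^2$, together with $N(N-1)/N^2 \le 1$, reduces the lemma to the pointwise-in-$t$ estimate
\begin{equation*}
\sum_{i\neq j} I(G_{ij}(t,\cdot)) \;\le\; 2N\, I(F^N(t,\cdot)).
\end{equation*}

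Next, I would extract this Fisher-information bound via a matching argument. Given any matching $M\subset\{(i,j):i<j\}$ (a set of pairwise-disjoint edges), the map $T_M:\mathbb{R}^{dN}\to\mathbb{R}^{dN}$ which, for each $(i,j)\in M$, replaces the block $(x_i,x_j)$ by $\bigl((x_i-x_j)/\sqrt{2},\,(x_i+x_j)/\sqrt{2}\bigr)$ and fixes every other coordinate is an orthogonal transformation of $\mathbb{R}^{dN}$, so its pushforward satisfies $I((T_M)_*F^N) = I(F^N)$. Because the edges of $M$ are disjoint, for each $(i,j)\in M$ the random variable $Y_{ij}$ appears as a distinct single-block coordinate of $(T_M)_*F^N$, so the sub-additivity of Fisher information (Lemma \ref{lem:summarginal}) applied to this pushforward gives
\begin{equation*}
\sum_{(i,j)\in M} I(G_{ij}(t,\cdot)) \;\le\; I(F^N(t,\cdot)).
\end{equation*}

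Finally, I would invoke the elementary fact that the edges of $K_N$ admit a proper edge-coloring with at most $N$ colors, i.e.\ $\{(i,j):i<j\}$ decomposes into $N$ matchings $M_1,\ldots,M_N$ (explicitly, $M_k=\{(i,j):i+j\equiv k\pmod N\}$ up to handling the vertex fixed by the pairing). Summing the previous inequality over $M_1,\ldots,M_N$ gives $\sum_{i<j} I(G_{ij}) \le N\,I(F^N)$, and since $G_{ji}$ is the pushforward of $G_{ij}$ by $y\mapsto -y$ (hence has identical Fisher information), $\sum_{i\neq j} I(G_{ij}) \le 2N\,I(F^N)$. Plugging into the first display proves \eqref{esti:fishersingular}.

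The ingredients are all standard (orthogonal invariance of Fisher information and sub-additivity under marginalization), so there is no real analytic obstacle; the only non-obvious step is realizing that the asymmetry of $F^N$ can be bypassed by a matching decomposition of $K_N$, which is exactly what converts the naive bound $\sum I(G_{ij})\le N(N-1)I(F^N)$ into the linear-in-$N$ bound needed for the $2\beta/N$ factor on the right-hand side.
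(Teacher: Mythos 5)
Your proof is correct, and it takes a recognizably different combinatorial route to the same end. Both you and the paper rest on the same two analytic facts: the rotation $(x_i,x_j)\mapsto\bigl((x_i-x_j)/\sqrt 2,(x_i+x_j)/\sqrt 2\bigr)$ on a block of disjoint pairs is orthogonal (hence preserves Fisher information), and Fisher information is sub-additive under taking single-coordinate marginals (Lemma~\ref{lem:summarginal}), so that for any matching $M$ one gets $\sum_{(i,j)\in M}I(G_{ij})\le I(F^N)$. Where you diverge is in the combinatorics. The paper averages the inequality over \emph{all} near-perfect matchings $\sigma\in S_N$ of $\{1,\dots,N\}$ and needs the counting identity $|S_N|=(N-1)|S_{N-2}|$ (respectively $N|S_{N-2}|$ for odd $N$) to see that each unordered pair appears $|S_{N-2}|$ times in the double sum, which after normalization produces exactly the factor $1/N$. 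You instead apply the hypothesis to each $G_{ij}$ first, reducing everything to the purely static bound $\sum_{i\ne j}I(G_{ij})\le 2N\,I(F^N)$, and then obtain that bound from a fixed proper edge-coloring of $K_N$ into at most $N$ matchings, summing the matching inequality over the $N$ color classes. This avoids the enumeration of $S_N$ and its cardinality recursion entirely; the price is that the edge-coloring fact (chromatic index of $K_N$ is $N-1$ or $N$) must be invoked, and one should be slightly careful that your explicit construction $M_k=\{(i,j):i+j\equiv k\pmod N\}$ is only a proper coloring for $N$ odd — for $N$ even a round-robin or an auxiliary-vertex argument is needed, though the crude bound of $N$ colors is still safe. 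Isolating $\sum_{i\ne j}I(G_{ij})\le 2N\,I(F^N)$ as a standalone estimate is arguably a cleaner formulation, since it cleanly separates the Fisher-information combinatorics from the application of the hypothesis to $f$, and it makes explicit the linear-in-$N$ improvement over the naive $O(N^2)$ bound — which is exactly the point of the lemma.
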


Note that the  typical choices of $f$ are of the forms $f(x)=|K(x)|^{\theta}$ as in Lemma \ref{lemma:lpqf}.

\begin{remark} If  the joint distribution $F^N$ is symmetric/exchangeable, then the conclusion in Lemma \ref{lemma:sym} is almost trivial. The novelty of this lemma is that we do not impose the symmetry constraint on $F_N$. It would be an interesting topic to study the tensorized property of entropy and Fisher information without the usual symmetry assumption.
\end{remark}

\begin{remark}
The static version of this lemma holds as well. More precisely, if  the function/kernel above doesn't depend on $t$, i.e. $f:\mathbb{R}^d\rightarrow \mathbb{R}^{+}$, and  
$\int_{\mathbb{R}^d}  f ( x ) F (\dif x)\leq     \alpha + \beta I (F)$  for all  $F\in \mathcal{P}(\mathbb{R}^d)$, then it holds 
\begin{equation} 
	\frac{1}{N^2} \sum_{i \neq j}  \mathbb{E} \[f \( \frac{1}{\sqrt{2}} (X_i - X_j )\)\] \leq  \alpha+\frac{2\beta}{N}
	I (F^N).\nonumber
\end{equation}
\end{remark}

\begin{proof}[Proof of Lemma \ref{lemma:sym}]Since we study the 2-body interactions, the case $N=2$ has nothing different compared to  the exchangeable case (for any $N$) and its proof has been essentially verified in the  proof of Lemma 3.3 in \cite{fournier2014propagation}.The only difference here is that we now write an abstract function $f$, instead of a particular form $f(x) = 1/|x|^{\theta}$ as in \cite{fournier2014propagation}. 
	
	 Now we consider the general case  $N\geq 3$,  where the proof is quite non-trivial   and  the key point is the following novel decomposition strategy for some average statistics (dating back to \cite{hoeffding1994probability}).
	
	We start with rewriting the right-hand side of  Eq. \eqref{esti:fishersingular}. Let $\sigma$ be a partition which divides the set $\{ 1, \ldots, N \}$ into $\frac{N}{2}$ groups of pairs of distinct numbers  when $N$ is even,  or $ \frac{N + 1}{2}$  groups with one group containing  a single number and the other $\frac{N-1}{2}$ groups consisting  pairs of distinct numbers when $N$ is odd.  Denote the collection of such
	partitions by $S_N$. Indeed,  when $N$ is even, then up to a permutation,  any  partition in $S_N$ can be reduced to the following canonical form
	\begin{equation*}
		\Big\{ (1, 2), (3, 4), \cdots, (N-1, N)  \Big\}.
	\end{equation*}
	When $N $ is odd, then again any partition in $S_N$ can be reduce to the canonical form
	\begin{equation*}
		\Big \{ (1, 2), (3, 4), \cdots, (N-2, N-1), \{N\} \Big \}.
	\end{equation*}
	Note that we keep the order in those pairs in $\sigma \in S_N$, i.e. when we write that $(i, j) \in \sigma$, by default we mean $i < j$.
	
	Given non-negative variables $(x_{i, j})_{i \ne j}$, one has
	\begin{equation}
		\sum_{i \neq j} x_{i, j} = \sum_{i >j} x_{i, j} + \sum_{i < j} x_{i, j}.
	\end{equation}	
	Below we focus on the summation of $i<j$,   the case   $i>j$ can be dealt in the same manner. When $x_{i,j}=x_{j,i}$, the two summations are  identical. We further find
	\begin{align}\label{fisher-mid1}
		\sum_{i < j} x_{i, j}	=  \frac{1}{ | S_{N - 2} |} \sum_{i < j} | S_{N - 2} | x_{i, j}
		=   \frac{1}{| S_{N-2} |} \sum_{\sigma \in S_N}
		\sum_{ (i, j) \in \sigma} x_{i, j},
	\end{align}
	where  the last equality follows by the fact that  for each pair $(i, j)$ with $i <j$,   it appears  exactly at  $| S_{N - 2} |$ times in the summation $\sum_{\sigma \in S_N}$.  This is  more evident by regarding $\{x_{i,j}\}$ as variables and comparing  the coefficient of each  $x_{i,j}$.
	
	Furthermore,   when counting $|S_N|$, the cardinality of  $S_N$, one can proceed by first arranging a number $j$ for the number $1$ to get a pair $(1, j)$, then there are $|S_{N-2}|$ possible ways to do the partition of the remaining numbers $\{1, 2, \cdots, N\} \setminus \{1, j\}$ when $N$ is even.  This reasoning gives the
	conclusion that
	\begin{equation} \label{fisher-mid2}
		|S_N|=(N-1)|S_{N-2}|, \quad N= 4,6, \cdots,
	\end{equation}
When $N$ is odd, we have $	|S_N|=(N-1)|S_{N-2}| + |S_{N-1}| $, since now we can first arrange a pair like $(1,2)$ or simply the single one $\{ 1\}$. By induction, one has  for even $N$, $|S_{N}| = |S_{N-1}|$.  Consequently,
\begin{equation} \label{fisher-mid22}
 |S_{N}| = N |S_{N-2}|, \quad N = 3,  5, \cdots.
  \end{equation}

	Combining Eq. \eqref{fisher-mid1},  \eqref{fisher-mid2} and \eqref{fisher-mid22}, one obtains that
	\begin{equation}
		\begin{split}
			& \frac{1}{N^2} \sum_{i <j} x_{i, j}= \frac{1}{|S_N|} \sum_{\sigma \in S_N} \frac{|S_N|}{N^2 |S_{N-2}|} \sum_{(i, j) \in \sigma }	 x_{i, j} \\
			& \leq  \frac{1}{|S_N|} \sum_{\sigma \in S_N}   \frac{1}{N} \sum_{(i, j) \in \sigma} x_{i, j}.
		\end{split}
	\end{equation}
	This holds as well for the summation of  ${i>j}$ pairs. Now  letting  $\int_0^T\mathbb{E} [f ( \frac{1}{\sqrt{2}} (X_i - X_j ))]$ play the role of $x_{i, j}$, it thus suffices to show for every partition $\sigma$,
	
	\begin{align}
		\frac{1}{N}  \sum_{ (i, j)  \in \sigma}
\int_0^T\mathbb{E} \Big[f \Big(t, \frac{1}{\sqrt{2}}(X_i (t)- X_j(t) )\Big)\Big] \mathd t  \leqslant &  \frac{\alpha}{2}+\frac{\beta}{N}  \int_0^TI (F^N(t,\cdot))\mathd t . \nonumber
	\end{align}
	
	To this end, for each partition $\sigma \in S_N$, we define $(Y_i)_{1 \leq i \leq N }$ as
	\begin{equation}\label{fisher-mid3}
		\begin{split}
			& Y_i \assign \frac{1}{\sqrt{2}} (X_i - X_j), \quad   Y_j \assign
			\frac{1}{\sqrt{2}} (X_i + X_j), \quad \tmop{for\,  \, } (i, j)
			\in \sigma\,  (\mbox{with }  i< j )  ;  \\
			& Y_i \assign X_i, \quad \tmop{ for\,  } \{ i\} \in \sigma .
		\end{split}
	\end{equation}
	Indeed, without loss of generality, one can always  reduce all $\sigma$ to the canonical form by a permutation of $N$ indices,  for instance in the following let us assume that $N$ is even and   $\sigma = \{ (1, 2), (3, 4), \ldots, (N - 1, N) \}$.  We denote $Y^N = (Y_1, \cdots, Y_N) $ as a function of $X^N = (X_1, \cdots, X_N)$, or simply $Y^N = \Phi(X^N)$,  according to the definition in Eq. \eqref{fisher-mid3}.
	
	%we make the transformation of variables via the following
	%	matrix:
	%	\begin{align}
		%		(Y_1, \ldots, Y_N) = & \frac{1}{\sqrt{2}} \left(\begin{array}{c}
			%		1 - 1 \ldots \ldots \ldots \ldots \ldots \ldots 0 0\\
			%	1 \quad 1 \ldots \ldots \ldots \ldots \ldots \ldots 0 0\\
			%	0 0 1 - 1 \ldots \ldots \ldots \ldots 0 0\\
			%		0 0 1 \quad 1 \ldots \ldots \ldots \ldots 0 0\\
			%		\ldots \ldots \ldots .\\
			%		0 0 \ldots \ldots \ldots \ldots \ldots 1 - 1\\
			%		0 0 \ldots \ldots \ldots \ldots \ldots 1 \quad 1
			%		\end{array}\right) \left(\begin{array}{c}
			%			X_1\\
			%			.\\
			%		.\\
			%		.\\
			%		.\\
			%		.\\
	Consequently, by change of variables,
	\begin{equation}\label{fisher-mid4}
		\frac{1}{N}  \sum_{ (i, j)  \in \sigma} \int_0^T\mathbb{E} \Big[f \Big(t, \frac{1}{\sqrt{2}}(X_i (t)- X_j(t) )\Big)\Big] \mathd t   = \frac{1}{N} \sum_{k=1}^{N/2} \int_0^T  \mathbb{E} \[f (t,Y_{2k-1}(t))\] \mathd t.
	\end{equation}
	Denote that $\bar F^N = F^N \circ \Phi^{-1}$. Then $\bar F^N$ is nothing but the law of the random variable $Y^N$, and in particular $I(\bar F^N) = I(F^N)$  since  $\Phi$ is an orthogonal transformation. Furthermore, let $\bar{F}_i$ be the
	distribution of $Y_i$. Then recalling our  assumption on the function $f$ and applying   Lemma \ref{lem:summarginal}, the right-hand side of Eq. \eqref{fisher-mid4}  can be further bounded by
	\begin{equation}	
		\begin{split}	
			& \sum_{k=1}^{N/2} \int_0^T  \mathbb{E} \[f (t,Y_{2k-1}(t))\] \mathd t=  \sum_{k=1}^{N/2}\int_0^T \int_{\mathbb{R}^d} f(t,y)  \bar F_{2k-1}(t,\dif y) \mathd t \\&  \leq    \sum_{k=1}^{N/2} \big(\alpha + \beta \int_0^TI (\bar F_{2k-1}(t,\cdot)) \mathd t \big)  \nonumber
				\\&\leq  \frac{N\alpha}{2}+ \beta \int_0^TI (\bar F^N(t,\cdot) )\mathd t
		= \frac{N\alpha}{2}+\beta \int_0^tI(F^N(t,\cdot))\mathd t.
		\end{split}
	\end{equation}
When $N$ is odd, the bound is  simply replacing $N\alpha/2$ by $(N-1)\alpha/2$.

	This completes the proof.
\end{proof}

Applying Lemma \ref{lemma:sym} with $|\tilde K|^{\frac{r}{r-1}}$  in   Lemma  \ref{lemma:lpqf}  playing  the role of $f$, we arrive at the following result.

\begin{corollary}\label{coro:averagesymme}
	For $d\geq2$,
	consider an $\mathbb{R}^d-$valued function  $\tilde{K}\in L^q([0,T],L^p(\mathbb{R}^d))$ with
	\begin{align*}
		\frac{d}{p}+\frac2q+\frac{2}{r}\leq 2,  \quad \frac{d}{p}+\frac{2}{r}<2,\quad r\in (1,\infty].
	\end{align*}   Then for any $\eps>0$ and  any $F^N\in C([0,T],\mathcal{P}(\mathbb{R}^{dN}))$, we have
	\begin{align}
		\frac{1}{N^2}\sum_{i\neq j}\int_0^T\int_{\mathbb{R}^{d}}|\tilde{K}(t, x_i-x_j)|^{\frac{r}{r-1}} F^N(t, x^N)\mathd x^N \dif t\leq \|\tilde{K}\|_{L^p_q}^{\frac{r}{r-1}} \bigg(C_{\eps,p,q,r,d,T} +\frac{\eps}{N} \int_0^T I(F^N(t, \cdot))\mathd t\bigg).\nonumber
	\end{align}
\end{corollary}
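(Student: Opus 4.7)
\medskip

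\noindent\textbf{Proof plan.} The statement is a direct composition of the two preceding ingredients: Lemma~\ref{lemma:lpqf}, which provides the single-marginal control
\begin{equation*}
\int_0^T\!\int_{\mathbb{R}^d}|K(t,x)|^{r/(r-1)}F(t,x)\,\mathd x\,\mathd t \le \|K\|_{L^p_q}^{r/(r-1)}\bigl(C_{\epsilon,p,q,r,d}+\epsilon\!\int_0^T\! I(F(t,\cdot))\,\mathd t\bigr),
\end{equation*}
and Lemma~\ref{lemma:sym}, which converts any such abstract estimate, for a nonnegative integrand $f(t,x)$, into a symmetrized estimate for pair interactions $\sum_{i\ne j}\mathbb{E}[f(t,(X_i-X_j)/\sqrt{2})]$. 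The plan is to feed the output of Lemma~\ref{lemma:lpqf} into the hypothesis slot of Lemma~\ref{lemma:sym} for a suitable choice of $f$.

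The only book-keeping issue is the $\sqrt{2}$-factor built into Lemma~\ref{lemma:sym}: that lemma controls $\mathbb{E}[f(t,(X_i-X_j)/\sqrt{2})]$, while the corollary asks for $\mathbb{E}[|\tilde K(t,X_i-X_j)|^{r/(r-1)}]$. I will therefore take the test function
\begin{equation*}
f(t,y):=|\tilde K(t,\sqrt{2}\,y)|^{r/(r-1)},\qquad \text{so that}\quad f\bigl(t,(X_i-X_j)/\sqrt{2}\bigr)=|\tilde K(t,X_i-X_j)|^{r/(r-1)}.
\end{equation*}
Define the rescaled kernel $\hat K(t,y):=\tilde K(t,\sqrt{2}\,y)$. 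An elementary change of variables gives $\|\hat K(t,\cdot)\|_{L^p}=2^{-d/(2p)}\|\tilde K(t,\cdot)\|_{L^p}$, hence $\|\hat K\|_{L^p_q}=2^{-d/(2p)}\|\tilde K\|_{L^p_q}$, and the scaling conditions on $(p,q,r,d)$ imposed in the corollary are precisely those of Lemma~\ref{lemma:lpqf}. Applying the latter to $\hat K$ supplies the hypothesis of Lemma~\ref{lemma:sym} with
\begin{equation*}
\alpha=C_{\epsilon,p,q,r,d}\,\|\tilde K\|_{L^p_q}^{r/(r-1)},\qquad \beta=\epsilon\,\|\tilde K\|_{L^p_q}^{r/(r-1)},
\end{equation*}
after absorbing the harmless factor $2^{-dr/(2p(r-1))}\le 1$ into the constants.

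Invoking Lemma~\ref{lemma:sym} then yields exactly
\begin{equation*}
\frac{1}{N^2}\sum_{i\ne j}\int_0^T\!\int_{\mathbb{R}^{dN}}|\tilde K(t,x_i-x_j)|^{r/(r-1)}F^N(t,x^N)\,\mathd x^N\,\mathd t \le \|\tilde K\|_{L^p_q}^{r/(r-1)}\Bigl(C_{\epsilon,p,q,r,d}+\tfrac{2\epsilon}{N}\!\int_0^T\! I(F^N(t,\cdot))\,\mathd t\Bigr),
\end{equation*}
and relabelling $\epsilon\leftarrow 2\epsilon$ (and redefining the constant accordingly) delivers the claimed inequality. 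There is no genuine obstacle here: the corollary is, by design, the packaged combination of Lemmas~\ref{lemma:lpqf} and~\ref{lemma:sym}, and the proof amounts to verifying that a simple dilation preserves the hypothesis of the symmetrization lemma. The main work has already been done in the proofs of those two lemmas, in particular the non-trivial partition argument establishing Lemma~\ref{lemma:sym}.
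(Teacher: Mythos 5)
Your proof is correct and follows the same route the paper takes, namely feeding the conclusion of Lemma~\ref{lemma:lpqf} into the hypothesis slot of Lemma~\ref{lemma:sym}. You are in fact slightly more careful than the paper's one-line justification: the dilation $\hat K(t,y)=\tilde K(t,\sqrt{2}\,y)$ and the resulting harmless factor $2^{-d/(2p)}$ correctly account for the $1/\sqrt{2}$ built into Lemma~\ref{lemma:sym}, a bookkeeping step the paper silently absorbs.
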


Now we are in the position to show the main estimate of this article. Due to the previously established technical lemmas \ref{lemma:lpqf} and \ref{lemma:sym}, the proof is quite  neat.

\begin{proposition}\label{prop:entropy} Suppose that $(\mK_r)$ and $(\mH)$ hold for some $r\in (1,\infty]$.
   For each $N\in \mathbb{N}$ and $T\geq0$,  there exists an entropy solution to \eqref{eq:IPS}. Furthermore, let $\{w^N\}$ be a  sequence satisfying $(\mW_r)$,    there exists a positive  constant $C_T$ such that for all  $t\in [0,T]$ and $N\in \mathbb{N}$,
  \begin{align}
   &\frac{1}{N}H (F_t^N) +\frac{1}{N}\sum_{i =1}^N\int_{\mathbb{R}^{dN}}\<x_i\>^{\gamma} F_t^N \mathd x^N + \frac{1}{2N} \int^t_0 I (F^N_s) \dif s\nonumber
  	\\\leq & \frac{1}{N}H (F_0^N) +\frac{1}{N}\sum_{i =1}^N\int_{\mathbb{R}^{dN}}\<x_i\>^{\gamma} F_0^N \mathd x^N  +C_T. \label{est:uniformfisher}
  \end{align}
\end{proposition}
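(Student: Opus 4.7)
The plan is a regularization argument: approximate $K$ by smooth bounded kernels, derive \eqref{est:uniformfisher} at the regularized level with constants uniform in the approximation parameter, and then pass to the limit. First, replace $K = K_1 + K_2$ by smooth bounded mollifications $K^\eta = K_1^\eta + K_2^\eta$ (e.g.\ Friedrichs mollifier); classical SDE theory then yields smooth strong solutions $X^{N,\eta}$ with smooth joint densities $F^{N,\eta}_t$ satisfying the Fokker-Planck equation with drift $b_i^\eta(x^N) := \frac{1}{N}\sum_{j\neq i}w_j^N K^\eta(x_i - x_j)$. Multiplying by $1 + \log F^{N,\eta}$ and integrating gives the basic identity
\begin{equation*}
\frac{d}{dt} H(F^{N,\eta}_t) + I(F^{N,\eta}_t) = \sum_{i=1}^N \int_{\mathbb{R}^{dN}} \nabla_i F^{N,\eta}_t \cdot b_i^\eta \, \mathrm{d} x^N,
\end{equation*}
so everything reduces to bounding the right-hand side by $\tfrac{1}{2}I(F_t^{N,\eta})$ plus an $O(N)$ remainder, uniformly in $\eta$ and $N$ after time integration.

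The heart of the argument is a splitting along $K_1$ and $K_2$. For the $K_1$-piece I integrate by parts in $x_i$, shifting the gradient onto $b_i^{K_1^\eta}$ to get $-\frac{1}{N}\sum_{i\neq j}w_j^N \int F^{N,\eta}(\mathrm{div}\, K_1^\eta)(x_i - x_j)\,\mathrm{d} x^N$. Young's inequality in the conjugate pair $(r, r^*)$, with $r^* = r/(r-1)$, splits $|w_j^N|\cdot|\mathrm{div}\,K_1^\eta|$ into $|w_j^N|^r/r + |\mathrm{div}\,K_1^\eta|^{r^*}/r^*$: the weight part is $O(N)$ by $(\mathbb{W}_r)$, while the kernel part falls exactly into the scope of Corollary \ref{coro:averagesymme} applied to $\mathrm{div}\,K_1^\eta$ with the Corollary's parameter equal to $r$, its integrability condition being matched precisely by the $K_1$-hypothesis $d/p_1 + 2/q_1 + 2/r < 2$. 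For the $K_2$-piece no divergence hypothesis is available, so I apply Young's pointwise: $|\nabla_i F^{N,\eta}\cdot K_2^\eta(x_i - x_j)| \leq \varepsilon |\nabla_i F^{N,\eta}|^2/F^{N,\eta} + (4\varepsilon)^{-1}F^{N,\eta}|K_2^\eta|^2$. Summed with weights $|w_j^N|/N$, the Fisher-info contribution is $\varepsilon \|w^N\|_{l^1} I(F^{N,\eta}) \leq \varepsilon \|w^N\|_{l^r} I(F^{N,\eta})$, which is absorbable; for the remaining $\sum_{i\neq j}|w_j^N|\int F^{N,\eta}|K_2^\eta|^2$ a second $(r, r^*)$-Young splits $|w_j^N|\cdot|K_2^\eta|^2$ into $|w_j^N|^r/r + |K_2^\eta|^{2r^*}/r^*$, and Corollary \ref{coro:averagesymme} is then applied with parameter $r_{\mathrm{Cor}} = 2r/(r+1)$, whose integrability requirement $d/p_2 + 2/q_2 + 1/r \leq 1$ is exactly the $K_2$-hypothesis in $(\mathbb{K}_r)$.

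The $\gamma$-moment is handled separately by Itô's formula on $\langle X_i^\eta\rangle^\gamma$: since $\gamma\in (0,1)$ both $|\nabla\langle x\rangle^\gamma|$ and $|\Delta\langle x\rangle^\gamma|$ are uniformly bounded, so $\sum_i \mathbb{E}\langle X_i^\eta(t)\rangle^\gamma$ is controlled by $\sum_i \int_0^t \mathbb{E}|b_i^\eta(s)|\,\mathrm{d} s + CNt$, and this in turn is controlled via Hölder on the $i$-sum by the uniform $L^{r^*}$-estimate $\int_0^t \sum_i \int F^{N,\eta}_s|b_i^\eta|^{r^*}\mathrm{d} x^N\,\mathrm{d} s \leq O(N) + \delta\int_0^t I(F^{N,\eta}_s)\,\mathrm{d} s$, which is a single clean application of Corollary \ref{coro:averagesymme} with parameter $r_{\mathrm{Cor}} = r$. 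Adding the three estimates and choosing the free small constants small enough yields \eqref{est:uniformfisher} for $X^{N,\eta}$ with $C_T$ independent of $\eta$ and $N$. Finally, the limit $\eta\to 0$ is taken: tightness of $\mathrm{Law}(X^{N,\eta})$ in $C([0,T];\mathbb{R}^{dN})$ follows from the uniform moment and Fisher-information bounds, the limit is identified as a probabilistically weak solution of \eqref{eq:IPS} via a martingale-problem argument (the $L^{r^*}$-control of $b_i^\eta$ being the key tool for passing to the limit in the nonlinear drift), and \eqref{est:uniformfisher} is inherited by lower semicontinuity of $H$ and $I$. The hard part throughout is precisely the non-exchangeability of $F^{N,\eta}$: the standard reduction of two-body averages to marginal Fisher informations is unavailable, and it is exactly the asymmetric averaging estimate Lemma \ref{lemma:sym}/Corollary \ref{coro:averagesymme}, combined with the right $(r,r^*)$-Young splittings that isolate the $|w_j^N|$ factors, that plays the role of the missing exchangeability.
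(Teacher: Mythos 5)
Your proposal is correct and follows essentially the same route as the paper: regularize $K$, differentiate the entropy along the Liouville/Fokker--Planck equation, integrate by parts on the $K_1$-part and split off the Fisher information on the $K_2$-part, invoke Lemma \ref{lemma:sym}/Corollary \ref{coro:averagesymme} to control the resulting asymmetric two-body averages, treat the $\gamma$-moment by testing with $\sum_i\langle x_i\rangle^\gamma$, absorb, and pass to the limit $\eta\to 0$ by tightness and lower semicontinuity. The only differences are cosmetic reparametrizations of the same estimates (you split $|w_j^N|\cdot|\mathrm{div}\,K_1|$ by pointwise Young, where the paper uses H\"older in $j$ followed by $a^{(r-1)/r}\le 1+a$; and you apply Corollary \ref{coro:averagesymme} to $K_2$ with the modified parameter $r_{\mathrm{Cor}}=2r/(r+1)$, where the paper equivalently applies it with $|K_2|^2$ playing the role of $\tilde K$).
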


\begin{proof}
We start with showing  the a priori estimate  uniformly in $N$.

For any $\varphi \in C^2(\mathbb{R}^{dN})$ vanishing at infinity, applying  It\^o's formula to $\varphi (X^N)$ and taking expectation, we arrive at   the Liouville  equation of $F^N$ as
	\begin{equation}\label{eqt:louvillllll}
		\partial_tF^N= \Delta F^N- \sum_{i=1}^N \div_{x_i}\( F^N \, \frac{1}{N}\sum_{j \neq i } w_j^NK(t,x_i-x_j)\)
	\end{equation}
in the  distributional sense.  We then  do some formal computations that can be made rigorous by  approximating the singular kernel $K$ by smooth functions.
%$F^N$ by a sequence   $F^{\eps,N}$ in $ C([0,T],C^{\infty}(\mathbb{R}^d))$.
Writing  $K=K_1+K_2$ as in ($\mathbb{K}_{r}$), we have
\begin{align}
	\frac{\ud }{\ud t }H(F^N)=& - I(F^N)+\frac{1}{N}\sum_{i \neq j}\int_{\mathbb{R}^{dN}}\nabla_iF^N\cdot w_j^NK(t,x_i-x_j)\mathd x^N\nonumber
	\\=& - I(F^N)-\frac{1}{N}\sum_{i \neq j}\int_{\mathbb{R}^{dN}}F^N w_j^N\div K_1(t,x_i-x_j)\mathd x^N\nonumber
	\\&+\frac{1}{N}\sum_{i \neq j}\int_{\mathbb{R}^{dN}}\nabla_iF^N\cdot w_j^N K_2(t,x_i-x_j)\mathd x^N\nonumber
	\\\assign&  - I(F^N)+J_1+J_2.\label{eqt:unif}
 \end{align}
In the following we apply  Corollary \ref{coro:averagesymme} to handle the interaction terms $J_1$ and $J_2$.  Recalling that the definition  $\|\cdot\|_{l^r}$ of $w^N$,  where the scaling  $N$ appears, and  applying H\"older's inequality, we  obtain
\begin{align*}
	|J_1|\leq&  N \int_{\mathbb{R}^{dN }} F^N \|w^N\|_{l^r} \bigg( \frac{1}{N^2}\sum_{i\neq j} |\div K_1(x_i-x_j)|^{\frac{r}{r-1}}\bigg)^{\frac{r-1}{r}}  \mathd x ^N
	\\\lesssim  &N \|w^N\|_{l^r} \int_{\mathbb{R}^{dN }} F^N \bigg(1+ \frac{1}{N^2}\sum_{i\neq j} |\div K_1(x_i-x_j)|^{\frac{r}{r-1}}\bigg)  \mathd x ^N
	\\\lesssim & N\|w^N\|_{l^r}+ \|w^N\|_{l^r} \frac{1}{N} \sum_{i\neq j} \int_{\mathbb{R}^{d N }} F^N |\div K_1(x_i-x_j)|^{\frac{r}{r-1}} \mathd x^N.
\end{align*}
By the condition ($\mathbb{K}_{r}$), we are allowed to apply Corollary \ref{coro:averagesymme}  with $\div K_1$ playing  the role of $\tilde{K}$.  That means, there exists a  positive constant independent of $N$ such that
\begin{align*}
	\frac{1}{N} \sum_{i\neq j}\int_0^t \int_{\mathbb{R}^{d N }} F^N |\div K_1(s,x_i-x_j)|^{\frac{r}{r-1}} \mathd x^N\mathd  s\leq  CN+ \frac{1}{8\|w^N\|_{l^r}} \int_0^t I (F^N) \mathd s.
\end{align*}
Therefore,  integrating $|J_1|$ w.r.t. time then gives
\begin{align}
	 \int_0^t|J_1|\mathd s\leq CN + \frac{1}{8} \int_0^t I(F^N )\mathd s .\label{inq:unif1}
\end{align}

The procedure for $K_2$ is similar.  We first  apply the Young's inequality to find
\begin{align*}
	 |J_2|\leq& \frac{1}{N}\sum_{i\neq j} |w_j^N|\left(  \epsilon  \int_{\mathbb{R}^{dN}}\frac{|\nabla_{i}F^N|^2}{F^N} \mathd x^N + C_{\epsilon} \int_{\mathbb{R}^{dN}} F^N \left|K_2(x_i-x_j)   \right|^2 \mathd x ^N  \right)
	 \\\leq& \epsilon\|w^N\|_{l^1}I(F^N) + \frac{C_{\epsilon}}{N}\sum_{i\neq j}|w_j^N |  \int_{\mathbb{R}^{dN}} F^N \left|K_2(x_i-x_j)   \right|^2 \mathd x ^N.
\end{align*}
Similarly, let $|K_2|^2$ play the role of $\tilde{K}$ in Corollary \ref{coro:averagesymme}, there exists a  constant $C'_{\epsilon}>0$, depending on $\epsilon $ only,  such that
\begin{align*}
	\frac{C_{\epsilon}}{N}\sum_{i\neq j}|w_j^N |\int_0^t  \int_{\mathbb{R}^{dN}} F^N \left|K_2(s,x_i-x_j)   \right|^2 \mathd x ^N \mathd s \leq  C'_{\epsilon}N+ \frac{1}{8 } \int_0^tI(F^N) \mathd s .
\end{align*}
Choosing $\epsilon $ such that $\eps\sup_N \|w^N\|_{l^1}$ less than $1/8$, we have
\begin{align}
\int_0^t	|J_2|\mathd s \leq  CN+ \frac{1}{8} \int_0^t I (F^N) \mathd s .\label{inq:unif2}
\end{align}

Combining  \eqref{eqt:unif}, \eqref{inq:unif1},  and \eqref{inq:unif2} then yields that
\begin{align}
	H(F_t^N)-H(F_0^N)\leq&  -\int_0^tI(F_s^N) \mathd s +CN + \frac{1}{4}\int_0^t I(F_s^N)\mathd s\nonumber
	\\\leq&  -\frac{3}{4}\int_0^tI(F_s^N) \mathd s +C_{\varTheta}N.\label{inq:unif5}
\end{align}
Here  the  constant  $C_{\varTheta}$ depends on $\varTheta=\{w^N, K_2, \div K_1,p_1,q_1,p_2,q_2,r,d\}$.

On the other hand, testing $\partial_tF_t^N$ with $\sum_{i}|x_i|^{\gamma}$ gives
\begin{align}
\frac{\mathd }{\mathd t}\sum_{i =1}^N\int_{\mathbb{R}^{dN}}\<x_i\>^{\gamma} F^N \mathd x^N =\sum_{i =1}^N\int_{\mathbb{R}^{dN}}F^N \Delta_i\<x_i\>^{\gamma}  \mathd x^N  + \frac{1}{N}\sum_{i\neq j }\int_{\mathbb{R}^{dN}}F^N \nabla_{i} \< x_i\>^{\gamma } w_j^NK(x_i-x_j)\mathd x^N\nonumber.
\end{align}
Since $\gamma\in (0,1)$, the functions $\Delta\<\cdot\>^{\gamma}$ and $\nabla \<\cdot\>^{\gamma}$ are bounded. This implies
\begin{align*}
	\sum_{i =1}^N\left| \int_{\mathbb{R}^{dN}}F^N \Delta_i\<x_i\>^{\gamma}  \mathd x^N  \right| \leq CN,
\end{align*}
and
\begin{align}
	\frac{1}{N}\sum_{i\neq j }\left| \int_{\mathbb{R}^{dN}}F^N \nabla_{i} \< x_i\>^{\gamma } w_j^NK(x_i-x_j)\mathd x^N\right| \leq  \frac{C}{N}\sum_{i\neq j }|w_j^N|\int_{\mathbb{R}^{dN}}F^N \left| K(x_i-x_j)\right| \mathd x^N,\label{inq:unif3}
\end{align}
with a constant $C$ depending on $\gamma$ only.  One may find    the right hand side  of \eqref{inq:unif3} familiar, which enjoys the same formulation  as  $J_1$ and $J_2$. Similarly, by the condition ($\mathbb{K}_{r}$), we can apply Corollary \ref{coro:averagesymme} with $|K|$ playing the role of $\tilde{K}$,  and obtain
\begin{align}
	\frac{1}{N}\sum_{i\neq j }\int_0^t\left| \int_{\mathbb{R}^{dN}}F^N \nabla_{i} \< x_i\>^{\gamma } w_j^NK(x_i-x_j)\mathd x^N\right| \mathd s\leq   CN+ \frac{1}{4} \int_0^t I (F^N)\mathd s. \label{inq:unif6}
\end{align}
Therefore, we have
\begin{align}
	\sum_{i =1}^N\int_{\mathbb{R}^{dN}}\<x_i\>^{\gamma} F_t^N \mathd x^N \leq \sum_{i =1}^N\int_{\mathbb{R}^{dN}}\<x_i\>^{\gamma} F_0^N \mathd x^N + C_TN+ \frac{1}{4} \int_0^t I (F_s^N)\mathd s.\label{inq:unif4}
\end{align}
Now that  we conclude the uniform estimate  \eqref{est:uniformfisher} by summing up  \eqref{inq:unif5} and \eqref{inq:unif4}.

In the following we prove the  existence of entropy solutions to the particle systems \eqref{eq:IPS}.
We consider the approximating systems to \eqref{eq:IPS} with $K$ in \eqref{eq:IPS} replaced by regularized kernels $\{K_{\eps}\}$. When $p,q<\infty$, we can construct $K_{\eps}:=(K*\rho_\eps) \chi_{1/\eps}$ with $\rho_\eps=\eps^{-d}\rho(\eps^{-1}x)$ being the mollifiers and $\chi_R\in C_c^\infty(\mR^d)$ with $\chi_R=1$ for $|x|\leq R$ and $\chi_R=0$ for $|x|>2R$.
%by convolving $K$ with mollifiers and  multiplying smooth cut-off functions,
We then have
\begin{align}\label{eq:app}
	\|K_{1,\eps}- K_1\|_{L^{p_1}_{q_1}}+\|\div K_{1,\eps}-\div K_1\|_{L^{p_1}_{q_1}}+ \| K_{2,\eps}-K_2\|_{L^{p_2}_{q_2}}\xrightarrow{\eps\rightarrow 0}0.
\end{align}
When $p_1=\infty$ or $p_2=\infty$, i.e. the bounded case, it  is intuitively less singular in our setting, but  requires  additionally truncations in  the approximating procedure. For instance $p_2=\infty$, we   decompose $K_2$ into $K_21_{|x|\leq R}$ plus the reminder $ K_21_{|x|>R}$.  Thus   $K_21_{|x|\leq R}$ is $L^p$-integrable for any $p>1$, we then proceed
the approximations $\{K_{2,\eps}^{R}\}$ for $K_21_{|x|<R}$ as the case  when $p<\infty$.
%The  remainder  in the computations  is controlled by the uniform $\gamma$-moment \eqref{inq:unif4}.

Since for the approximating system the coefficients $K^\eps$ are smooth and have compact support, there exist unique solutions $X^{\eps,N}$ to the approximating system. Moreover, the related infinitesimal generator for the approximating system is uniform elliptic and has smooth coefficients, which implies that the law of $X^{\eps,N}$
 has a  smooth  density  $F^{\eps,N}\in C([0,T],C^{\infty}(\mathbb{R}^d))$.  Furthermore, the above computation for \eqref{est:uniformfisher} holds for $F^{\eps,N}$ with $C_T$ independent of $\eps$ and $N$.

       { 
       		To pass the limit $\varepsilon \to 0$ and construct an entropy solution to (1.1), we employ a tightness argument. While the procedure is standard for stochastic equations with singular coefficients and  is similar to arguments  for our main results in Section \ref{sec:mf}, we only provide a sketch of the main steps here to avoid repetition.
       		
       		The argument proceeds in two main steps. First, one establishes the tightness of the sequence of laws of $\{X^{\varepsilon,N}\}_{\varepsilon > 0}$ in $C([0,T], \mathbb{R}^{dN})$.  This relies on obtaining crucial  estimates for $X^{\eps,N}$ as in  \eqref{estimate:uni} below, which hold uniformly in $\eps$ (in contrast to the uniform-in-$N$ estimates required in Section \ref{sec:mf}). This allows us to extract a convergent subsequence. By Skorokhod's representation theorem, we may assume, after possibly passing to a new probability space, that this subsequence converges almost surely to a limit process $X^N$ with law $F^N$. Since  the Boltzmann entropy, the $\gamma$-th moment,  and the Fisher information functionals are lower semicontinuous with respect to the weak convergence, { see  \cite[Lemma 3.1 and 3.5]{hauray2014kac}},   the estimate \eqref{est:uniformfisher} holds uniformly  for $F^N$.

       		Second, we must identify the limit and show it solves \eqref{eq:IPS}. The main difficulty is the convergence of the singular interaction term. Following a standard procedure, we split the difference:
       		$$
       		\mathbf{K}_{\varepsilon}(X^{\varepsilon,N}) - \mathbf{K}(X^N) = \underbrace{(\mathbf{K}_{\delta}(X^{\varepsilon,N}) - \mathbf{K}_{\delta}(X^N))}_{\text{(I)}} + \underbrace{(\mathbf{K}_{\varepsilon}(X^{\varepsilon,N}) - \mathbf{K}_{\delta}(X^{\varepsilon,N}))}_{\text{(II)}} + \underbrace{(\mathbf{K}_{\delta}(X^N) - \mathbf{K}(X^N))}_{\text{(III)}},
       		$$
       		where  $\mathbf{K}_\eps(X^{\eps,N})$ is short for $\sum_jw_j^NK_\eps(X^{\eps}_i-X^{\eps}_j)$, and other    abbreviations  are analogous.
       		For a fixed small $\delta > 0$, the kernel $K_\delta$ is continuous and bounded, thus term (I) converges to zero almost surely. The key is to control the error terms (II) and (III). Using the properties of the kernel approximation \eqref{eq:app} and the uniform Fisher information \eqref{est:uniformfisher}, these terms can be controlled in expectation. For instance, the third term is bounded as:
       		$$
       		\mathbb{E}  \int_0^T \left|\mathbf{K}_{\delta}(X^N_t) - \mathbf{K}(X^N_t) \right| dt \le C \(1+\int_0^TI(F_t^N)\mathd t\) \cdot ||K_\delta - K||_{L^p_q} \xrightarrow{\delta \to 0} 0.
       		$$
       		A similar bound holds for term (II) using the uniform-in-$\varepsilon$ Fisher information. Combining these facts in a standard $\varepsilon-\delta$ argument shows the interaction term converges correctly. Finally, apply L\'evy's characterization theorem to confirm that the law of $X^N$ is a solution to the martingale problem for \eqref{eq:IPS}, and thus   $X^N$ is an entropy solution.}
\end{proof}

\section{Random measures with Sobolev regularity}\label{sec:randommeasure}
In this section, we investigate  a sequence of random measures $g^N$ in order to propagate the regularities. %The sequence of $\{ g^N\}$ is constructed to merge with  a sequence of weighted  empirical measures $\tilde{\mu}_N$ as $N$ goes to infinity, while $g^N$ enjoys Sobolev regularity.

 When the systems are exchangeable, every accumulation point of $\{\frac{1}{N}\sum_{i=1}^N\delta_{X_i(t)}\assign\nu_N\}$ enjoys finite Fisher information  once  the normalized  Fisher information of the joint laws $F^N$, i.e. $\frac 1 N  I(F_N)$,  is uniformly bounded, see   \cite[Theorem 5.7]{hauray2014kac}.   		However, the exchangeability plays a crucial role in the  above argument, so it cannot be applied in our setting. In order to propagate the regularity of empirical measures  for non-exchangeable systems, we introduce a sequence of  auxiliary random measures $\{g^N\}$ as follows.

%For each $t\in [0,T]$, 
We use the disintegration theorem from \cite[Theorem 5.3.1]{ambrosio2008gradient} to write  the product measure $\dif t\times F_t^N(\dif x_1,\dots,\dif x_N)$ as
\begin{align*}
F_t^N(\mathd x_1,...,\mathd x_N)\dif t=&\dif t\times f_{t}^1(\mathd x_1)f_{t}^2(x_1,\mathd x_2)...f_{t}^N(x_1,...,x_{N-1},\mathd x_N)
	\\\assign&\mathd t \times\Pi_{i=1}^Nf_{t}^i(x^{i-1,N},\mathd x_i),
\end{align*}
where $ f_t^i(x^{i-1,N},\dif x_i)$ is a transition probability kernel from $[0,T]\times \mR^{d(i-1)}$ to $\sB(\mR^d)$, i.e. for every  $A\in \sB(\mR^d)$, $(t,x^{i-1,N})\to f_t^i(x^{i-1,N},A)$ is $\sB([0,T]\times \mR^{(i-1)d})$-measurable and for every $t\in [0,T], x^{i-1,N}\in \mR^{d(i-1)}$, $f_{t}^i(x^{i-1,N},\mathd x_i)$ is a probability on $\mR^d$. 
%\rmb{Furthermore, for a.e. $t\in [0,T]$  $f_t^i(x^{i-1,N},\mathd x_i) $ is the regular conditional probability for given $x^{i-1,N} = (x_1,\dots,x_{i-1})$, i.e.
%\begin{equation}\label{DisIntegration_New}
%	 f_{t}^i(x^{i-1,N},\mathd x_i)
%	 = f_{t}^i(x_1,...,x_{i-1},\mathd x_i)
%	 = \mathcal{L}(X_i(t)| X_1(t)=x_1,...,X_{i-1}(t)=x_{i-1}).
%\end{equation}}
Furthermore, there exists a zero measure set $\cN\subset [0,T]$ such that for $t\in \cN^c$  
\begin{equation}\label{DisIntegration_New}f_t^i(X^{i-1,N}(t),\mathd x_i)=\mathcal{L}(X_i(t)| X^{i-1,N}(t)) \quad \mP-a.s.,\end{equation}
where $\mathcal{L}(X_i(t)| X^{i-1,N}(t))$ is the conditional probability of $X_i(t)$ w.r.t. the $\sigma$-algebra generated by $X^{i-1,N}(t)$. 
%Notice that $\Pi_{i=1}^Nf_{t}^i\times \mathd t $ is  the disintegration of $F_t^N\times \mathd t$, then $f^i$ is measurable with respect to $(t,x_1,...,x_{i-1})$.

Given a set  of deterministic weights $\{\tilde{w}_i^N,1\leq i\leq N\}$,  we define the  random measures $\{g^N\}$  as
\begin{align}
	g^N(t, \mathd x )\assign %\frac{1}{N}\sum_{i=1}^N\tilde{w}_i^Nf^i(\cdot,\mathd x)\circ \phi(t)= 
	\frac{1}{N}\sum_{i=1}^N\tilde{w}_i^Nf_{t}^i(X_t^{i-1,N},\mathd x ),\label{defi:gnmarginal}
\end{align}
where  $X_t^{i-1,N}=(X_1(t),\dots, X_{i-1}(t))$. Since $f_t^i(x^{i-1,N},\dif x_i)$ is a transition probability kernel and $t\to X_t^{i-1,N}$ is continuous a.s., $g^N(t,\dif x)$ is also a transition kernel from $[0,T]$ to $\sB(\mR^d)$ a.s.. 
%and  the function $\phi(t)= (t,X_t^{i-1,N})$ is continuous.

The main results  in this section are Lemma \ref{lem:close} and  Lemma \ref{lem:fisherpreseve}.   Lemma \ref{lem:close}  tells us  that the sequence $\{g^N\}$  merges with  the sequence of empirical measures $\{\tilde{\mu}_N\}$ as $N \to \infty$. %The result    \cite[Theorem 5.3]{hauray2014kac}  is of the same spirit in order to prove  \cite[Theorem 5.7]{hauray2014kac} for the symmetric case.  
We use Lemma \ref{lem:fisherpreseve} to obtain the unform regularities of $g^N$.

%Ignoring the time variable, the principle behind  roughly consists of three consecutive steps: the empirical measure $\nu_N$ gets closer to  finite-marginal distribution of $F^N$  in some sense as $N$ goes to infinity;
%1: The law of $\nu_N$ is close to the law of finite-marginal distribution of $F^N$ (c.f. \cite[Lemma 2.8]{hauray2014kac}), in the sense that for any $\varphi\in C_b (\mathbb{R}^{dk})$ and for any $k\geq 1$
%\begin{align*}
	%\left\langle  \Phi_{\varphi}, \mathcal{L}(\nu_N)\right\rangle \assign\mathbb{E} \left\langle \varphi , {\nu_N}^{\otimes k}\right\rangle %= \left\langle \varphi,\mathcal{L}(X_1,...X_k)\right\rangle + O(\frac{k^2}{N}).
%\end{align*}

%2: The sequence of laws of $\{\mu_N\}$  converges weakly  to $\pi\in \mathcal{P} (\mathcal{P}(\mathbb{R}^d))$ if and only if for any $k\geq 1$,
%\begin{align*}
	%\mathcal{L} (X_1,...,X_k)\rightharpoonup \pi_k\assign \int_{\mathcal{P}(\mathbb{R}^d)} \nu^{\otimes k}\pi(\mathd \nu).
%\end{align*}
%This result (c.f.) relies on  the famous DeFinetti–Hewitt–Savage theorem \cite{de1937prevision,hewitt1955symmetric}, see also \cite[Theorem 5.1]{hauray2014kac}.

%3: Using some characterizations of level-3  Fisher information, it was proved in \cite[Theorem 5.7]{hauray2014kac} that
%\begin{align*}
	%\mathcal{I}(\pi)\assign \int_{\mathcal{P}(\mathbb{R}^d)} I(\nu) \pi(\mathd \nu)\leq \liminf_{N \rightarrow \infty} \frac{1}{N} I(F^N).
%\end{align*}

\subsection{Weakly merging sequences}\label{subsec:gn}
We use the concept {\em weakly merging}  to describe  how ``close" is $g_t^N$ to $\tilde{\mu}_N(t)$.
\begin{definition}\label{def:4.1}
	Two sequences of finite   measure valued stochastic processes $\{\mu_N(t)\}_{t\in[0,T]}$ and $\{\nu_N(t)\}_{t\in[0,T]}$ on $\mathbb{R}^d$ are called  weakly merging if for each $\varphi\in C_b(\mathbb{R}^d)$,  the sequence of  random variables  $\{\<\varphi,{\mu}_N(t)-\nu_N(t)\>\}$ converges to zero for almost all $(t,\omega)\in [0,T]\times \Omega$.
\end{definition}
 When the sequences are deterministic, Definition \ref{def:4.1} agrees with classical version as  in \cite{d1988merging,bogachev2007measure2,dudley2018real}.   The first result below  shows that  $\{g_t^N\}_{t\in[0,T]}$ and $\{\tilde{\mu}_N(t)\}_{t\in[0,T]}$ defined in Theorem \ref{thm:mfpde}  are weakly merging.%The proof borrows the idea from \cite[Lemma 2.5.1 and Theorem 8.2.8]{dupuis2011ldp} for studying  large deviation theory.
\begin{lemma}\label{lem:close} Given a family $\{\tilde{w}^N, N\in \mathbb{N}\}$ satisfying $(\mathbb{W}_r)$ for some  $r\in (1,\infty]$.
	Then   the sequences of finite measure valued stochastic processes   $\{ \tilde{\mu}_N, N \in \mathbb{N}\}$  and $\{ g^N,N \in \mathbb{N}\}$
	are weakly merging.
\end{lemma}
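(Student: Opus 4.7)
The plan is to exploit the conditional-expectation structure built into $g^N$ by the disintegration \eqref{DisIntegration_New}. For $t$ outside the null set $\cN$, one computes
\begin{align*}
\langle\varphi,g^N(t,\cdot)\rangle
&= \frac{1}{N}\sum_{i=1}^N \tilde w_i^N\int_{\mR^d}\varphi(x)\,f_t^i(X_t^{i-1,N},\mathd x) \\
&= \frac{1}{N}\sum_{i=1}^N \tilde w_i^N\,\mE\bigl[\varphi(X_i(t))\,\big|\,X_t^{i-1,N}\bigr] \quad \mP\text{-a.s.},
\end{align*}
so that
\begin{align*}
\langle\varphi,\tilde{\mu}_N(t)-g^N(t,\cdot)\rangle = \frac{1}{N}\sum_{i=1}^N M_i^N(t), \quad M_i^N(t) \assign \tilde w_i^N\bigl(\varphi(X_i(t))-\mE[\varphi(X_i(t))\mid X_t^{i-1,N}]\bigr).
\end{align*}
For each fixed $t$, $(M_i^N(t))_{i=1}^N$ is a martingale-difference array with respect to the discrete filtration $\cG_i\assign\sigma(X_t^{i,N})$, with $|M_i^N(t)|\leq 2|\tilde w_i^N|\,\|\varphi\|_\infty$. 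This algebraic identity is the key advantage of having introduced $g^N$ via disintegration rather than via any ad-hoc smoothing of $\tilde\mu_N$.

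With $p\assign r\wedge 2\in (1,2]$, the Burkholder-Davis-Gundy inequality combined with the subadditive estimate $(\sum a_i^2)^{p/2}\leq \sum|a_i|^p$ (valid since $p/2\leq 1$) gives
\begin{align*}
\mE\Big|\tfrac{1}{N}\sum_{i=1}^N M_i^N(t)\Big|^p \leq \frac{C_p}{N^p}\sum_{i=1}^N \mE|M_i^N(t)|^p \leq \frac{C_p(2\|\varphi\|_\infty)^p}{N^p}\sum_{i=1}^N |\tilde w_i^N|^p.
\end{align*}
Because $p\leq r$ and $\{\tilde w^N\}$ is uniformly bounded in $l^r$, Jensen's inequality gives $\frac{1}{N}\sum_i|\tilde w_i^N|^p\leq \|\tilde w^N\|_{l^r}^p=O(1)$, so that
\begin{align*}
\mE\bigl|\langle\varphi,\tilde\mu_N(t)-g^N(t,\cdot)\rangle\bigr|^p \leq \frac{C_\varphi}{N^{p-1}}
\end{align*}
uniformly in $t\in[0,T]$. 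Integrating in time and invoking Fubini's theorem yields convergence in $L^p(\mathd t\otimes\mathd\mP)$, and hence convergence in $(\mathd t\otimes\mathd\mP)$-measure on $[0,T]\times\Omega$.

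To upgrade this $L^p$ decay to almost-everywhere convergence of the full sequence, I would combine Chebyshev's inequality with the Borel-Cantelli lemma along a sparse subsequence $\{N_k\}$ chosen so that $\sum_k N_k^{-(p-1)}<\infty$ (which is automatic when $r>2$), and bridge consecutive indices $N_k\leq N<N_{k+1}$ by reusing the uniform-in-$N$ moment bound above together with a maximal-type inequality over the finite window. Extension from a single $\varphi$ to all of $C_b(\mR^d)$ is then standard via a countable determining subclass (e.g.\ functions in $C_c^\infty(\mR^d)$ with rational coefficients), using the deterministic total-variation bound $\|\tilde\mu_N(t)\|_{TV}+\|g^N(t,\cdot)\|_{TV}\leq 2\|\tilde w^N\|_{l^1}=O(1)$. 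The main obstacle I foresee is exactly this last promotion from $L^p(\mathd t\otimes\mathd\mP)$ decay to almost-everywhere convergence of the entire (not sub-)sequence in the regime $r\leq 2$; the martingale-difference structure produced by the disintegration, together with the $l^r$-summability of the non-exchangeable weights, is what delivers the decisive $O(N^{-(p-1)})$ moment bound in the first place.
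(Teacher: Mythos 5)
Your reduction to a martingale--difference array via the disintegration identity $\langle\varphi,g^N_t\rangle=\frac1N\sum_i\tilde w_i^N\mE[\varphi(X_i(t))\mid X_t^{i-1,N}]$ is exactly the paper's starting point, but the moment estimate you apply afterwards is too weak to close the Borel--Cantelli step, and the gap is real for \emph{every} $r$, not just $r\leq2$. With $p\assign r\wedge2\in(1,2]$, your bound $\mE|\langle\varphi,\tilde\mu_N(t)-g^N_t\rangle|^p\lesssim N^{-(p-1)}$ has exponent $p-1\in(0,1]$, so $\sum_N N^{-(p-1)}$ diverges for every admissible $r$; in particular the parenthetical ``which is automatic when $r>2$'' is incorrect --- there $p=2$ and the decay is $O(1/N)$, still not summable. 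Moreover the subadditivity inequality $(\sum a_i^2)^{p/2}\leq\sum|a_i|^p$ you invoke forces $p\leq2$, so you cannot simply raise $p$ inside your computation. The sparse-subsequence-plus-bridging idea you gesture at is also not straightforward: $\frac1N\sum_{i\leq N}M_i^N(t)$ is not a rescaled partial sum of a fixed sequence, since both the weights $\tilde w_i^N$ and the joint law $F_t^N$ change with $N$, so the monotone structure behind the SLLN bridging trick is absent, and this step is left entirely undone.

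The paper avoids all of this by replacing Burkholder--Davis--Gundy with the Azuma--Hoeffding inequality. Since $|M_i^N(t)|\leq2\|\varphi\|_\infty|\tilde w_i^N|$ is a bounded martingale-difference array, one gets the sub-Gaussian tail
\[
\mathbb{P}\bigl(\bigl|\langle\varphi,\tilde\mu_N(t)-g^N_t\rangle\bigr|>\eps\bigr)\leq 2\exp\Bigl(-\tfrac{N^2\eps^2}{8\|\varphi\|_\infty^2\sum_i|\tilde w_i^N|^2}\Bigr),
\]
and then $\sum_i|\tilde w_i^N|^2=O(N)$ for $r\geq2$, while $\sum_i|\tilde w_i^N|^2\leq\|\tilde w^N\|_{l^\infty}^{2-r}\sum_i|\tilde w_i^N|^r\lesssim N^{2/r}$ for $1<r<2$ (using $\|\tilde w^N\|_{l^\infty}\lesssim N^{1/r}$), yielding a tail $2\exp(-CN^{\theta_r}\eps^2)$ with $\theta_r=\min\{1,2-2/r\}>0$. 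This \emph{is} summable in $N$ for every $\eps>0$, so Borel--Cantelli gives a.e.\ convergence of the full sequence directly. If you prefer to stay with BDG, the viable fix is to drop the constraint $p\leq r\wedge2$ altogether: take any $p\geq2$, bound the quadratic variation by $\bigl(\sum_i|\tilde w_i^N|^2\bigr)^{p/2}$ using the same weight estimate, and obtain $\mE|\langle\varphi,\tilde\mu_N(t)-g^N_t\rangle|^p\lesssim N^{-p\min\{1/2,\,1-1/r\}}$, summable once $p$ is large enough --- but that is precisely the content the Azuma--Hoeffding bound delivers in one step.
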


\begin{proof}
	We start with representing  $\tilde{\mu}_N(t)-g_t^N$ via a  martingale difference sequence for a.e. $t\in [0,T]$.  
{ In order to have  better intuition for the random measures $g_t^N$ defined in \eqref{defi:gnmarginal},  one  view $g_t^N$ as a sequence of conditional expectations of the empirical measure $\mu_t^N$. Specifically, we have:
\begin{equation}
	\tilde{\mu}_N(t) = \frac{1}{N} \sum_{i=1}^N \tilde{w}_i^N \delta_{X_i(t)} \quad \text{vs} \quad g_t^N = \frac{1}{N} \sum_{i=1}^N \tilde{w}_i^N \mathbb{E}[\delta_{X_i(t)} | \mathcal{F}_{i-1}],
\end{equation}
where $\mathcal{F}_{i-1} = \sigma(X_1(t), \dots, X_{i-1}(t))$ is the  $\sigma-$fields generated by $(X_1(t),...X_{i-1}(t))$, where we omit the dependence of $\mathcal{F}_{i}$ on $t$ for simplicity. 

For each bounded Borel measurable function $\varphi $ on $\mathbb{R}^d$, the difference $\langle \varphi, \tilde{\mu}_N(t) \rangle - \langle \varphi, g_t^N \rangle$ is precisely the normalized sum of prediction errors:
\begin{equation}
	\langle \varphi, \tilde{\mu}_N(t) \rangle - \langle \varphi, g_t^N \rangle = \frac{1}{N} \sum_{i=1}^N \underbrace{\tilde{w}_i^N \left( \varphi(X_i(t)) - \mathbb{E}[\varphi(X_i(t)) | \mathcal{F}_{i-1}] \right)}_{:= M_i}.
\end{equation}
As defined, it is clear that the sequence $(M_1, \dots, M_N)$ determines a martingale difference sequence with respect to the filtration $(\mathcal{F}_i)_{i=0}^N$ (with $\mathcal{F}_0$ being the trivial $\sigma$-algebra), in the sense that $\mathbb{E}[M_i | \mathcal{F}_{i-1}] = 0$ and $|M_i|\leq 2|\tilde{w}_i^N|\|\varphi\|_{L^{\infty}}$ for all $i=1, \dots, N$.

We apply a generalized Azuma-Hoeffding inequality from  \cite[Theorem 3.13]{mcdiarmid1998concentration},  which is directly applicable to martingale difference sequences with non-uniform bounds. This gives, for all $N \in \mathbb{N}$ and $\eps>0$

}

	%For $t\in[0,T]$ we use $\mathcal{F}_{i}$, $i=0,...,N-1$ to denote the $\sigma-$fields generated by $(X_1(t),...X_i(t))$, where we omit the dependence of $\mathcal{F}_{i}$ on $t$ for simplicity. Observe that  and $t\in \cN^c$
	%\begin{align}
	%	\mathbb{E} \(\varphi(X_i(t))|\mathcal{F}_{i-1}\)=\int_{\mathbb{R}^d}\varphi(x) f_t^i(X_1(t),...,X_{i-1}(t)|\mathd x)= \<\varphi,f_t^i(X_t^{i-1,N},\cdot)\> ,\quad \mP-\text{a.s.} , \nonumber
%	\end{align}
	%which  leads to for $t\in \cN^c$
%	\begin{align}
%		\<\varphi,\tilde{\mu}_N(t)\>-\<\varphi, g_t^N\>= \frac{1}{N}\sum_{i=1}^N \tilde{w}_i^N\left( \varphi(X_i(t))-\<\varphi,f_t^i(X_t^{i-1,N},\cdot)\> \right)   \assign  \frac{1}{N}\sum_{i=1}^N M_i ,
%	\end{align}
	%where $\{M_i,i=1,...,N\}$ is a  martingale difference sequence  with respect to $(\mathcal{F}_{i})$
	\begin{align}
		\mathbb{P}\left( \left| \<\varphi,\tilde{\mu}_N(t)\>-\<\varphi, g_t^N\>\right|  >\eps\right)\leq 2\exp \left(  -\frac{N^2{\eps}^2}{8\|\varphi\|_{L^{\infty}}^2 \sum_{i=1}^N|\tilde{w}_i^N|^2}\right) .
	\end{align}
		When $r\geq 2$, the fact that  $\|\tilde{w}^N\|_{l^2}\leq \|\tilde{w}^N\|_{l^r}$ gives for $t\in \cN^c$
	\begin{align*}
		\mathbb{P}\left( \left| \<\varphi,\tilde{\mu}_N(t)\>-\<\varphi, g_t^N\>\right|  >\eps\right)\leq 2 \exp \( -CN\eps^2 \).
	\end{align*}
	When  $r\in (1,2)$, we use $\|\tilde{w}^N\|_{l^2}^2\leq \|\tilde{w}^N\|_{l^r}^r \|\tilde{w}^N\|_{l^{\infty}}^{2-r}$ and $\|\tilde{w}^N\|_{l^{\infty}}\lesssim N^{\frac{1}{r}}$ to obtain for $t\in \cN^c$
	\begin{align*}
		\mathbb{P}\left( \left| \<\varphi,\tilde{\mu}_N(t)\>-\<\varphi, g_t^N\>\right|  >\eps\right)\leq 2 \exp \( -CN\|\tilde{w}^N\|_{l^{\infty}}^{r-2}\eps^2 \)\leq 2 \exp \( -CN^{2-\frac{2}{r}}\eps^2 \).
	\end{align*}
We note that the universal constant $C$ is independent of $t$ and $N$, which yields that 
\begin{align*}
	\int_{[0,T]\times \Omega}   1_{\left\lbrace  \left| \<\varphi,\tilde{\mu}_N(t)\>-\<\varphi, g_t^N\>\right|  >\eps\right\rbrace }  \mathd t \times \mathd \mathbb{P}  \leq& \sup_{t\in \cN^c} T\mathbb{P}\left( \left| \<\varphi,\tilde{\mu}_N(t)\>-\<\varphi, g_t^N\>\right|  >\eps\right)
	\\\leq &2 T\exp \( -CN^{\theta_r}\eps^2 \),
\end{align*}
where $\theta_r>0$.
	Therefore, for each  $\varphi$,   the sequence
	\begin{align*}
		\left\{  \<\varphi,\tilde{\mu}_N(\cdot)\>-\<\varphi, g_{\cdot}^N\>,N \in \mathbb{N}\right\}
	\end{align*}
	converges to zero in measure.  Furthermore, by the Borel-Cantelli lemma and
	\begin{align*}
		\sum_{N\geq 1} \int_0^T\mathbb{P}\left( \left| \<\varphi,\tilde{\mu}_N(t)\>-\<\varphi, g_t^N\>\right|  >\eps\right) <\infty,
	\end{align*}
	we conclude that  $ \<\varphi,\tilde{\mu}_N(t)\>-\<\varphi, g_t^N\>$
	converges to zero $\mathd t\times \mathd \mathbb{P}$-almost everywhere for each $\varphi\in C_b( \mathbb{R}^d)$.
\end{proof}
\subsection{Regularity of $g^N$}
In the subsequent lemmas, we shall study the regularity of $g^N$.  Classically, we first justify the absolutely continuity of the random measures.
\begin{lemma}\label{lem:tightgnentropy}
Assume that $(\mH)$, $(\mK_r)$ and $(\mW_r)$ hold for some $r\in (1,\infty]$.	For all  $1\leq i\leq N$, $f_t^i(X_t^{i-1,N}, \mathd x )$ is absolutely continuous with respect to the Lebesgue measure  
for	a.s. $(t,\omega)\in [0,T]\times \Omega$.
Furthermore, we have for $\gamma\in (0,1)$,
	\begin{align*}
		\frac{1}{N}\sum_{i=1}^N\mathbb{E}\left(  \int_{\mathbb{R}^d}|x|^{\gamma}f_t^{i}(X_t^{i-1,N}, x)\mathd x +H(f_t^i(X_t^{i-1,N},\cdot))\right) =   \frac{1}{N}\sum_{i=1}^N\mathbb{E}|X_i(t)|^{\gamma}+\frac{1}{N}H(F_t^N).
	\end{align*}
\end{lemma}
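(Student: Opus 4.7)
The proof rests on two familiar tools, the entropy chain rule and the tower property of conditional expectation, once Proposition \ref{prop:entropy} is invoked to guarantee that $H(F_t^N) < \infty$ (and hence that $F_t^N$ admits a Lebesgue density) for a.e.\ $t \in [0,T]$.

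First I would establish the absolute continuity by building the conditional densities explicitly as ratios of marginals. With the convention $F_t^0 \equiv 1$, set $F_t^{i-1}(x^{i-1,N}) := \int_{\mathbb{R}^{d(N-i+1)}} F_t^N(x^N)\, \mathrm{d}x_i\cdots \mathrm{d}x_N$, which is again a density on $\mathbb{R}^{d(i-1)}$ since $F_t^N \ll \mathrm{Leb}$, and define
$$\tilde f_t^i(x^{i-1,N}, x_i) := \frac{F_t^i(x^{i,N})}{F_t^{i-1}(x^{i-1,N})}\,\mathbf{1}_{\{F_t^{i-1}(x^{i-1,N}) > 0\}}.$$
A straightforward application of Fubini shows that $\int_{\mathbb{R}^d} \tilde f_t^i(x^{i-1,N}, x_i)\,\mathrm{d}x_i = 1$ for $F_t^{i-1}$-a.e.\ $x^{i-1,N}$, and the uniqueness assertion in the disintegration theorem identifies $\tilde f_t^i$ with the kernel $f_t^i$ appearing in \eqref{DisIntegration_New} up to an $F_t^{i-1}$-null set. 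Since $X^{i-1,N}(t)$ has distribution $F_t^{i-1}$, we conclude that $f_t^i(X^{i-1,N}(t), \mathrm{d}x)$ is absolutely continuous with respect to Lebesgue measure for a.s.\ $(t,\omega) \in [0,T]\times\Omega$.

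For the quantitative bound I would treat the moment and entropy summands separately, both as equalities. The moment part follows directly from the tower property: for each $i$,
$$\mathbb{E}\Big[\int_{\mathbb{R}^d} |x|^{\gamma}\, f_t^i(X^{i-1,N}(t), x)\,\mathrm{d}x\Big] = \mathbb{E}\big[\mathbb{E}[|X_i(t)|^{\gamma} \mid X^{i-1,N}(t)]\big] = \mathbb{E}|X_i(t)|^{\gamma}.$$
The entropy part is the telescopic Boltzmann chain rule. From $F_t^N(x^N) = \prod_{i=1}^N f_t^i(x^{i-1,N}, x_i)$ on $\{F_t^N > 0\}$, taking $\log$, multiplying by $F_t^N$, and invoking Fubini with $\int F_t^N \,\mathrm{d}x_{i+1}\cdots \mathrm{d}x_N = F_t^{i-1}(x^{i-1,N})\, f_t^i(x^{i-1,N}, x_i)$, one obtains
$$H(F_t^N) = \sum_{i=1}^N \int_{\mathbb{R}^{d(i-1)}} F_t^{i-1}(x^{i-1,N})\, H\!\big(f_t^i(x^{i-1,N}, \cdot)\big)\,\mathrm{d}x^{i-1,N} = \sum_{i=1}^N \mathbb{E}\big[H(f_t^i(X^{i-1,N}(t), \cdot))\big].$$

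Summing these two identities and dividing by $N$ yields the desired estimate (in fact an equality). The only delicate point, and the one I would handle with most care, is the integrability of the individual summands $H(f_t^i(X^{i-1,N}(t),\cdot))$, which can be negative. Here the universal bound recalled in the introduction—that the negative part of $H(\mu)$ is controlled by a constant plus the $\gamma$-th moment of $\mu$—together with the moment identity above shows $\mathbb{E}[H(f_t^i(X^{i-1,N}(t), \cdot))^-]$ is finite, thereby legitimizing the application of Fubini and the rearrangement used in the chain rule.
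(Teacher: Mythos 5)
The proposal is correct but takes a genuinely different route from the paper. The paper sidesteps sign and integrability issues by passing to relative entropy against the reference measure $\theta = Ce^{-|x|^{\gamma}}$: the chain rule for relative entropy, whose terms are all non-negative, gives $H(F_t^N \,|\, \theta^{\otimes N}) = \sum_i \mathbb{E}\bigl[H(f_t^i \,|\, \theta)\bigr]$ with no Fubini concerns, and expanding both sides simultaneously produces the moment and entropy contributions plus the absolute continuity (from a.s.\ finiteness of $H(f_t^i \,|\, \theta)$). You instead decompose the Boltzmann entropy directly through the telescoping product $F_t^N = \prod_i f_t^i$, handle the moment part by the tower property, and then — as you correctly flag — must control the negative parts to justify Fubini. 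Both arguments in fact yield an equality, not merely the stated inequality; the paper's relative-entropy device is cleaner because non-negativity is built in, while yours is more elementary and makes the two contributions explicitly parallel, and your explicit construction of $f_t^i$ as ratios of marginals is also a more direct route to absolute continuity. One precision worth tightening: what Fubini actually requires is finiteness of $\mathbb{E}\int f_t^i (\log f_t^i)^-\,\mathrm{d}x$, i.e.\ the negative part taken \emph{inside} the integral, which is a priori larger than $\mathbb{E}\bigl[H(f_t^i)^-\bigr]$. The standard bound the paper alludes to is in fact the stronger statement $\int f(\log f)^-\,\mathrm{d}x \leq C + \int \langle x\rangle^{\gamma} f\,\mathrm{d}x$, so your argument survives, but you should invoke that form explicitly rather than the weaker bound on $H(f)^-$.
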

\begin{proof} For each $\theta \in \mathcal{P}(\mathbb{R}^d)$,  the chain rule for relative entropy  \cite[Theorem C.3.1]{dupuis2011ldp} gives that
	\begin{align*}
		H(F_t^N|\theta^{\otimes N})&=\sum_{i = 1}^N \int_{\mathbb{R}^{d(i-1)}}H\( f_{t}^i(x^{i-1,N},\cdot)|\theta\) \Pi_{k=1}^{i-1}f_{t}^{k}(x^{k-1,N},\mathd x_k)
		\\&=\sum_{i = 1}^N \int_{\mathbb{R}^{dN }}H\( f_{t}^i(x^{i-1,N},\cdot)|\theta\)F_t^N(\mathd x^N)
		\\&=		\sum_{i=1}^N\mathbb{E}\[H\(f_{t}^i(X_t^{i-1,N},\cdot)|\theta\)\].
	\end{align*}
	Then we choose $\theta$ to be $Ce^{-|x|^{\gamma}}$,  where $C$ is the  normalizing  constant such that $\|\theta\|_{L^1}=1$.  We thus find
	\begin{align}
		\sum_{i=1}^N\mathbb{E}\[H\(f_{t}^i(X_t^{i-1,N},\cdot)|\theta\)\]= H(F_t^N)-N\log  C+\sum_{i=1}^N \mathbb{E}|X_i(t)|^{\gamma}<\infty,\label{inq:tightgn1}
	\end{align}
	which implies the absolutely continuity of $f_t^i(X_t^{i-1,N},\cdot)$ for each $i$.

	On the other hand, we find
	\begin{align}
		H(f_t^i(X_t^{i-1,N},\cdot)|\theta)=H(f_t^i(X_t^{i-1,N},\cdot)) -\log C + \int_{\mathbb{R}^{d}} |x|^{\gamma} f_t^i(X_t^{i-1,N},x)\mathd x .\label{inq:tightgn2}
	\end{align}
	The proof is thus completed by combining \eqref{inq:tightgn1} and \eqref{inq:tightgn2}.
\end{proof}

From Proposition \ref{prop:entropy} we know $F_t^N$ is absolutely continuous w.r.t. the Lebesgue measure, which combined with Lemma \ref{lem:tightgnentropy} implies the absolute continuity of $f_t^i(x^{i-1,N},\dif x_i)$,  denoted as $f_t^i(x_1,\dots,x_i)\dif x_i$. 

Lemma \ref{lem:tightgnentropy} also implies the absolute continuity of $g^N$.

\begin{corollary}
	For each $N$, $g^N$ has a density, still denoted by $g^N$,   with respect to the to the Lebesgue measure  
	for	a.s. $(t,\omega)\in [0,T]\times \Omega$, 
\end{corollary}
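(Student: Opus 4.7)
\medskip

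\noindent\textbf{Proof proposal.} The plan is to reduce the corollary to a finite-linear-combination argument on top of Lemma \ref{lem:tightgnentropy}. Recall the definition
\begin{equation*}
g^N(t,\mathd x) = \frac{1}{N}\sum_{i=1}^N \tilde{w}_i^N\, f_t^i(X_t^{i-1,N},\mathd x).
\end{equation*}
Lemma \ref{lem:tightgnentropy} asserts, for each fixed $i\in\{1,\dots,N\}$, the existence of a $\mathd t\times\mathd\mP$-null set $\mathcal{N}_i\subset [0,T]\times\Omega$ off of which $f_t^i(X_t^{i-1,N},\mathd x)$ is absolutely continuous with respect to Lebesgue measure on $\mathbb{R}^d$. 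Setting $\mathcal{N}=\bigcup_{i=1}^N \mathcal{N}_i$, which is still of $\mathd t\times\mathd\mP$-measure zero (finite union), for every $(t,\omega)\notin\mathcal{N}$ \emph{all} the transition kernels $f_t^i(X_t^{i-1,N},\cdot)$ are simultaneously absolutely continuous, hence admit Radon--Nikodym densities $f_t^i(X_t^{i-1,N},x)$.

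On this good set, $g^N(t,\cdot)$ is a finite signed linear combination of absolutely continuous finite measures, hence itself absolutely continuous, with density
\begin{equation*}
g^N(t,x) = \frac{1}{N}\sum_{i=1}^N \tilde{w}_i^N\, f_t^i(X_t^{i-1,N},x).
\end{equation*}
The only mild bookkeeping point is joint $(t,\omega,x)$-measurability of the right-hand side, which is inherited from the fact that $f_t^i(x^{i-1,N},\cdot)$ is a transition kernel from $[0,T]\times\mathbb{R}^{d(i-1)}$ to $\mathcal{B}(\mathbb{R}^d)$ together with the $(\mathcal{F}_t)$-adaptedness and continuity of $t\mapsto X_t^{i-1,N}$; the densities may be chosen jointly measurable by a standard selection argument (e.g.\ taking the Radon--Nikodym derivative to be the limit of difference quotients over a countable basis of balls). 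No step here is substantive: the only thing to be careful about is that the exceptional null set is taken to be the \emph{finite} union $\bigcup_i \mathcal{N}_i$ rather than an uncountable union, which is why the statement is almost a one-line consequence of Lemma \ref{lem:tightgnentropy}. There is no real obstacle; the substance of the argument was already absorbed into Lemma \ref{lem:tightgnentropy}.
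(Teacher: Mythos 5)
Your proof is correct and takes the same route the paper implicitly uses: the paper simply remarks ``Lemma~\ref{lem:tightgnentropy} also implies the absolute continuity of $g^N$,'' i.e.\ that $g^N$ is a finite linear combination of the absolutely continuous kernels $f_t^i(X_t^{i-1,N},\cdot)$, and you have filled in precisely the finite-union-of-null-sets bookkeeping that this one-line remark elides. Nothing is missing.
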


The following lemma ensures that the Fisher information does not increase under the construction of random measures.
\begin{lemma}\label{lem:fisherpreseve}For the conditional distributions $\{f_t^i, 1\leq i\leq N\}$ constructed by disintegration as in \eqref{DisIntegration_New},  it holds  that
	\begin{align}
	\sum_{i=1}^N\mathbb{E}\int_0^TI\(f_t^i(X_t^{i-1,N}, \cdot )\)\mathd t \leq \int_0^T I(F_t^N)\mathd t .\label{inq:fisherpreserve}
	\end{align}
	In particular, if $\tilde{w}_i^N=1$ in \eqref{defi:gnmarginal} for all $1\leq i\leq N$, then
	\begin{equation*}
		\mathbb{E}  \int_0^TI(g_t^N)\mathd t \leq  \frac 1 N \int_0^TI(F_t^N)\mathd t .
		\end{equation*}
\end{lemma}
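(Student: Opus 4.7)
The plan is to prove the inequality coordinate-by-coordinate, using the identity $I(F_t^N)=\sum_{i=1}^N\int F_t^N|\nabla_{x_i}\log F_t^N|^2\mathd x^N$ combined with a conditional Jensen argument that controls the $i$-th partial information of a marginal by the $i$-th partial information of the full joint density. The convexity of Fisher information will then handle the particular case for $g_t^N$.

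More precisely, let $F_t^{i,N}(x_1,\dots,x_i)\assign\int F_t^N\mathd x_{i+1}\cdots\mathd x_N$ denote the marginal of $F_t^N$ on the first $i$ variables. Since $F_t^N$ is absolutely continuous by Proposition \ref{prop:entropy}, the disintegration in \eqref{DisIntegration_New} gives $f_t^i(x_1,\dots,x_i)=F_t^{i,N}/F_t^{i-1,N}$ on the set $\{F_t^{i-1,N}>0\}$, which carries full $F_t^N$-mass. Because $F_t^{i-1,N}$ does not depend on $x_i$, we have $\nabla_{x_i}\log f_t^i=\nabla_{x_i}\log F_t^{i,N}$, and a direct Fubini computation yields
\begin{align*}
\mathbb{E}\,I(f_t^i(X_t^{i-1,N},\cdot))=\int F_t^{i,N}|\nabla_{x_i}\log F_t^{i,N}|^2\mathd x_1\cdots\mathd x_i=\int F_t^N|\nabla_{x_i}\log F_t^{i,N}|^2\mathd x^N.
\end{align*}
I would then apply Cauchy--Schwarz (equivalently, conditional Jensen) to the identity $\nabla_{x_i}F_t^{i,N}=\int\nabla_{x_i}F_t^N\,\mathd x_{i+1}\cdots\mathd x_N$, which rewrites as $\nabla_{x_i}\log F_t^{i,N}=\mathbb{E}[\nabla_{x_i}\log F_t^N\mid X_1,\dots,X_i]$. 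Jensen then gives the pointwise bound $|\nabla_{x_i}\log F_t^{i,N}|^2\leq\mathbb{E}[|\nabla_{x_i}\log F_t^N|^2\mid X_1,\dots,X_i]$, so integrating against $F_t^N$ produces
\begin{align*}
\int F_t^N|\nabla_{x_i}\log F_t^{i,N}|^2\mathd x^N\leq\int F_t^N|\nabla_{x_i}\log F_t^N|^2\mathd x^N.
\end{align*}

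Summing over $i=1,\dots,N$, using $I(F_t^N)=\sum_i\int F_t^N|\nabla_{x_i}\log F_t^N|^2\mathd x^N$, and integrating in $t\in[0,T]$ yields \eqref{inq:fisherpreserve}. For the particular case $\tilde w_i^N\equiv 1$, the random measure $g_t^N=\frac{1}{N}\sum_i f_t^i(X_t^{i-1,N},\cdot)$ is a convex combination of probability densities, so the well-known convexity of the Fisher information functional gives $I(g_t^N)\leq\frac{1}{N}\sum_i I(f_t^i(X_t^{i-1,N},\cdot))$; taking expectation, integrating in $t$, and invoking the bound just established completes the proof.

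The main technical subtlety is justifying the differentiation under the integral sign and the division by densities that may vanish. This is handled by the usual conventions (setting ratios to zero off the positivity set and using that $\nabla F=0$ a.e.\ on $\{F=0\}$ for Sobolev densities), in the same spirit as the proof of Lemma \ref{lem:summarginal}; Proposition \ref{prop:entropy} together with Lemma \ref{lemma:sobo} ensure the Sobolev regularity of $F_t^N$ and its marginals for a.e.\ $t$, so that all manipulations are rigorous.
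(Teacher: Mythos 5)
Your proof is correct, and it is rigorous once the standard technicalities you flag (differentiating marginals under the integral sign, handling the zero set of densities) are resolved via the Sobolev regularity supplied by Proposition \ref{prop:entropy} and Lemma \ref{lemma:sobo}, exactly as you indicate. However, it follows a genuinely different route from the paper's.

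Both arguments begin with the same identity
\begin{equation*}
\sum_{i=1}^N\mathbb{E}\,I\big(f_t^i(X_t^{i-1,N},\cdot)\big)=\sum_{i=1}^N\int_{\mathbb{R}^{dN}}\big|\nabla_{x_i}\log f_t^i(x_1,\dots,x_i)\big|^2F_t^N(x^N)\,\mathd x^N,
\end{equation*}
but then diverge. The paper expands $\nabla_{x^N}\log F_t^N=\sum_{i=1}^N\nabla_{x^N}\log f_t^i$ and proves, by a telescoping computation, that the cross terms $\int(\nabla_{x_k}\log f^i)\cdot(\nabla_{x_k}\log f^j)\,F_t^N\,\mathd x^N$ with $i<j$ vanish (this is the ``chain rule of Fisher information''). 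That yields the exact decomposition $I(F_t^N)=\sum_i\int|\nabla_{x^N}\log f_t^i|^2F_t^N$, after which the lemma follows from the trivial pointwise bound $|\nabla_{x_i}\log f_t^i|\leq|\nabla_{x^N}\log f_t^i|$. You instead observe that $f_t^i=F_t^{i,N}/F_t^{i-1,N}$, hence $\nabla_{x_i}\log f_t^i=\nabla_{x_i}\log F_t^{i,N}$, identify $\nabla_{x_i}\log F_t^{i,N}$ as the conditional expectation $\mathbb{E}[\nabla_{x_i}\log F_t^N\mid X_1,\dots,X_i]$, and apply conditional Jensen coordinate by coordinate, then sum. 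Your route is shorter and relies on a single standard probabilistic ingredient (the score of a marginal is the conditional expectation of the score, plus Jensen), whereas the paper's argument makes the orthogonality of the conditional scores explicit and thereby isolates an exact and reusable decomposition of $I(F_t^N)$ rather than just an inequality. The treatment of the second claim, via convexity of the Fisher information functional applied to $g_t^N=\frac{1}{N}\sum_i f_t^i(X_t^{i-1,N},\cdot)$, is identical in both.
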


\begin{proof}We first
	rewrite the left side of \eqref{inq:fisherpreserve}  using the definition of the Fisher information, and find
	\begin{align*}
	\sum_{i=1}^N\mathbb{E}\int_0^TI\(f_t^i(X_t^{i-1,N}, \cdot )\)\mathd t=	&\sum_{i=1}^N\mathbb{E}\int_0^T\int_{\mathbb{R}^{d}} \frac{|\nabla_x f_t^i(X_t^{i-1,N},x)|^2}{f_t^i(X_t^{i-1,N},x)}\mathd x \mathd t
		\\=&\sum_{i=1}^N\int_0^T\int_{\mathbb{R}^{dN}}\int_{\mathbb{R}^{d}} \frac{|\nabla_x f_t^i(x_1,...x_{i-1,},x)|^2}{f_t^i(x_1,...,x_{i-1,},x)}\mathd xF_t^N(x^N)\mathd x^N\mathd t .
	\end{align*}
	Using the disintegration of $F_t^N$, we have
	\begin{align}
		\sum_{i=1}^N\mathbb{E}\int_0^TI\(f_t^i(X_t^{i-1,N})\)\mathd t=	&\sum_{i=1}^N\int_0^T\int_{\mathbb{R}^{dN}}\int_{\mathbb{R}^{d}} \frac{|\nabla_{x} f_t^i(x_1,...,x_{i-1},x)|^2}{f_t^i(x_1,...,x_{i-1},x)} \Pi_{j=1}^{N}f_t^j(x_1,...,x_{j})\mathd x\mathd x^N\mathd t\nonumber
		\\=&\sum_{i=1}^N\int_0^T\int_{\mathbb{R}^{di}} \frac{|\nabla_{x_i} f_t^i(x_1,...,x_{i-1},x_i)|^2}{f_t^i(x_1,...,x_{i-1},x_i)} \Pi_{j=1}^{i-1}f_t^j(x_1,...,x_{j})\mathd x_1...\mathd x_i\mathd t\nonumber
		\\=& \sum_{i=1}^N\int_0^T\int_{\mathbb{R}^{dN}} \left| \nabla_{x_i}\log  f_t^i(x_1,...,x_{i-1},x_i)\right| ^2 F_t^N(x^N)\mathd x^N\mathd t,
	\end{align}
where we used $\int_{\mathbb{R}^d} f_t^i(x_1,...,x_{i-1},x_i)\dif x_i=1$.
	On the other hand, we find
	\begin{align*}
		I(F_t^N)	= &\int_{\mathbb{R}^{dN}} \left| \frac{\nabla_{x^N} F_t^N(x^N)}{F_t^N(x^N)}\right|^2 F_t^N(x^N)\mathd x^N \\=&\int_{\mathbb{R}^{dN}} \left|\sum_{i=1}^N\nabla_{x^N} \log f_t^i (x_1,...,x_i)\right|^2 F_t^N(x^N)\mathd x^N
		\\=&\sum_{i=1}^N\int_{\mathbb{R}^{dN}} \left|\nabla_{x^N} \log f_t^i (x_1,...,x_i)\right|^2 F_t^N(x^N)\mathd x^N.
	\end{align*}
	At the last equality we used  the chain rule of Fisher information \cite{zamir1998proof}, equivalent to the fact that  the summation of cross terms equals to zero. More precisely, the summation of all the  cross terms  consists of the following summations with $k\leq i<j$ (we use abbreviated  notations for simplicity) ,
	\begin{align*}
		&\sum_{j>i}	\int_{\mathbb{R}^{dN}} \nabla_{x_k}\log f^i(x_1,...,x_i)\nabla_{x_k} \log f^j (x_1,...,x_i,...,x_j)F^N(x^N)
		\\=& \sum_{j>i}\int_{\mathbb{R}^{d{i}}} \nabla_{x_k}   \log f^i(x_1,...,x_i)  \Pi_{ m\leq i}f^{m}\int_{\mathbb{R}^{d(N-i)}}\frac{\nabla_{x_k}f^j(x_1,...,x_i,...,x_j)}{f^j} \Pi_{l>i}f^{l}
		\\=& \int_{\mathbb{R}^{d{i}}} \nabla_{x_k}   \log f^i(x_1,...,x_i)   \Pi_{ m\leq i}f^{m}\left( \sum_{j>i}\int_{\mathbb{R}^{d(N-i)}}\nabla_{x_k}f^j \Pi_{l>i,l\neq j}f^{l}\right)
		\\=&\int_{\mathbb{R}^{d{i}}} \nabla_{x_k}  \log f^i(x_1,...,x_i)  \Pi_{ m\leq i}f^{m}\left( \nabla_{x_k}\int_{\mathbb{R}^{d(N-i)}} \Pi_{l>i}f^{l}\right)
		\\=&\int_{\mathbb{R}^{d{i}}} \nabla_{x_k}   \log f^i(x_1,...,x_i) \Pi_{ m\leq i}f^{m} \left( \nabla_{x_k}1\right) =0.
	\end{align*}
	Therefore, we arrive at
	\begin{align}
	\sum_{i=1}^N\mathbb{E}\int_0^TI\(f_t^i(X_t^{i-1,N})\)\mathd t =	&\sum_{i=1}^N\int_0^T\int_{\mathbb{R}^{dN}} \left| \nabla_{x_i}\log  f_t^i(x_1,...,x_i)\right| ^2 F_t^N(x^N)\mathd x^N\mathd t\nonumber
		\\\leq &\sum_{i=1}^N\int_0^T\int_{\mathbb{R}^{dN}} \left|\nabla_{x^N} \log f_t^i (x_1,...,x_i)\right|^2 F_t^N(x^N)\mathd x^N\mathd t=\int_0^TI(F_t^N)\mathd t,\nonumber
	\end{align}
	which is exactly \eqref{inq:fisherpreserve}.
	
	When $\tilde{w}_i^N=1$ for all $1\leq i\leq N$,  the convexity of the Fisher information, see \cite[Lemma 3.5]{hauray2014kac}  yields that
	\begin{align}
		I(g^N_t)\leq  \frac{1}{N}\sum_{i=1}^NI\(f_t^i(X_t^{i-1,N}, \cdot )\),\nonumber
	\end{align}
	the result thus follows.
\end{proof}

Since we are interested in non-identical and even unbounded weights, there would be loss of integrability for $g^N$ defined in \eqref{defi:gnmarginal} when $r<\infty$,  due to the appearance of  the weights and the corresponding condition $(\mW_r)$. More precisely,  in Lemma \ref{lem:gnfisher}  below,   the value range of $p$ increases as $r$ increases. 

\begin{lemma}\label{lem:gnfisher}  Given a family $\{\tilde{w}^N, N\in \mathbb{N}\}$ satisfying the condition $(\mathbb{W}_r)$ (i.e. uniformly bounded  in $ l^r$) for some $r\in (1,\infty]$ and assume $(\mH)$, $(\mW_r)$ and $(\mK_r)$, then one has the following results:
	\begin{enumerate}
		\item It holds that
		\begin{align}\label{eq:S1}
			 \mathbb{E} \|g_t^N\|_{L^{\infty}([0,T],L^1(\mathbb{R}^d))}\leq \|w^N\|_{l^1}\leq  \|w^N\|_{l^r}.
		\end{align}
		\item 	For any $1\leq p,q<\infty$ satisfying
		\begin{align}
			\frac{d}{p}+\frac{2(r-1)}{r}\geq d,\quad    \frac{d}{p}+\frac{2}{q}\geq  d ,\label{con:pq1}
		\end{align} it holds that
		\begin{align}\label{eq:S2}
			\mathbb{E}\int_0^T\| g_t^N\|_{L^p}^{q}\mathd t \leq   C \|\tilde{w}^N\|_{l^r}^{q}T+ C \frac{ \|\tilde{w}^N\|_{l^r}^{q} }{N}\mathbb{E}\int_0^T I(F_t^N) \mathd t ,
		\end{align}
	where $p \in [1, \frac{d}{d-2}]$ when $d \geq 3$ and $p \in [1, \infty)$ when $d = 2$.
		\item When $d\geq 3$,  for any $1\leq p,q<\infty$ satisfying 	\begin{align}
			\frac{d}{p}	+\frac{2(r-1)}{r}\geq d+1,\quad   \frac{d}{p}+\frac{2}{q}\geq  d+1,\label{con:pq2},
		\end{align}
		it  holds that
		\begin{align}\label{eq:S3}
			\mathbb{E} \int_0^T \|\nabla g_t^N\|_{L^p}^{q} \mathd t \leq   C\|\tilde{w}^N\|_{l^r}^{q}  T +  C \frac{ \|\tilde{w}^N\|_{l^r}^{q} }{N}\mathbb{E}\int_0^T I(F_t^N) \mathd t,
		\end{align}
	When $d=2$, the result holds for  $p \in [1, 2)$.
	\end{enumerate}
Here the proportional constants are independent of $N$.
\end{lemma}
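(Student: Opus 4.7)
The plan is to prove the three estimates by a direct Minkowski--Hölder--Jensen chain, leveraging the Sobolev-type control of $L^p$ (and $W^{1,p}$) norms by the Fisher information from Lemma \ref{lemma:sobo} and the key monotonicity estimate for the disintegrated densities from Lemma \ref{lem:fisherpreseve}.

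For \eqref{eq:S1}, I would simply take the $L^1_x$ norm inside the sum defining $g^N$: since each $f_t^i(X_t^{i-1,N},\cdot)$ is a probability density, $\|g_t^N\|_{L^1} \leq \frac{1}{N}\sum_{i=1}^N |\tilde w_i^N| = \|\tilde w^N\|_{l^1}$, and the bound $\|\tilde w^N\|_{l^1}\leq \|\tilde w^N\|_{l^r}$ follows from Jensen. Taking supremum in $t$ and expectation gives the claim.

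For \eqref{eq:S2}, I first apply Minkowski to obtain $\|g_t^N\|_{L^p} \leq \tfrac{1}{N}\sum_i |\tilde w_i^N|\,\|f_t^i(X_t^{i-1,N},\cdot)\|_{L^p}$, then use Hölder in the index $i$ with conjugate exponents $(r,\tfrac{r}{r-1})$ to extract $\|\tilde w^N\|_{l^r}$. Raising to the $q$-th power yields
\begin{equation*}
\|g_t^N\|_{L^p}^q \leq \|\tilde w^N\|_{l^r}^q \Bigl(\tfrac{1}{N}\sum_i \|f_t^i\|_{L^p}^{r/(r-1)}\Bigr)^{q(r-1)/r}.
\end{equation*}
Next I invoke Lemma \ref{lemma:sobo}(1), giving $\|f_t^i\|_{L^p}^{r/(r-1)} \leq C\, I(f_t^i(X_t^{i-1,N},\cdot))^{\delta}$ with $\delta=\tfrac{d(p-1)}{2p}\cdot\tfrac{r}{r-1}$. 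The condition $\tfrac{d}{p}+\tfrac{2(r-1)}{r}\geq d$ is precisely $\delta \leq 1$, so Jensen (concavity of $x\mapsto x^{\delta}$) moves the exponent inside, producing a factor $\bigl(\tfrac{1}{N}\sum_i I(f_t^i)\bigr)^{\delta q(r-1)/r}$. Setting $\eta:=\delta q(r-1)/r=\tfrac{d(p-1)q}{2p}$, the second condition $\tfrac{d}{p}+\tfrac{2}{q}\geq d$ is exactly $\eta\leq 1$, so after taking expectation, integrating in $t$, and applying Jensen in $(t,\omega)$, one gets
\begin{equation*}
\mathbb{E}\int_0^T \|g_t^N\|_{L^p}^q \mathd t \leq C\|\tilde w^N\|_{l^r}^q\, T^{1-\eta}\Bigl(\tfrac{1}{N}\sum_i \mathbb{E}\int_0^T I(f_t^i)\,\mathd t\Bigr)^{\eta}.
\end{equation*}
Lemma \ref{lem:fisherpreseve} bounds the bracket by $\tfrac{1}{N}\int_0^T I(F_t^N)\,\mathd t$, and a final application of Young's inequality $x^\eta\leq 1+x$ gives the desired splitting into the $T$-term and the Fisher-information term.

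For \eqref{eq:S3}, the same chain works verbatim after replacing Lemma \ref{lemma:sobo}(1) by Lemma \ref{lemma:sobo}(2): $\|\nabla f_t^i\|_{L^p}^{r/(r-1)} \leq C\,I(f_t^i)^{\delta'}$ with $\delta'=\bigl(\tfrac{d+1}{2}-\tfrac{d}{2p}\bigr)\tfrac{r}{r-1}$, and the strengthened conditions $\tfrac{d}{p}+\tfrac{2(r-1)}{r}\geq d+1$ and $\tfrac{d}{p}+\tfrac{2}{q}\geq d+1$ are precisely what is needed to keep the two Jensen steps valid (i.e.\ both $\delta'\leq 1$ and $\delta' q(r-1)/r\leq 1$). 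The main bookkeeping obstacle is making sure each exponent condition matches exactly one of the two hypotheses in \eqref{con:pq1} or \eqref{con:pq2}; once this is verified, the rest is routine. Note that the argument implicitly uses Minkowski for the gradient (exchanging $\nabla$ with the finite sum over $i$), which is valid since the weights $\tilde w_i^N$ are constants in $x$.
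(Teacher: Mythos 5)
Your proof is correct and follows essentially the same approach as the paper: Minkowski in the sum over $i$, H\"older with exponents $(r,\tfrac{r}{r-1})$, Lemma~\ref{lemma:sobo} to convert $L^p$ (resp.\ $W^{1,p}$) norms into powers of Fisher information, and Lemma~\ref{lem:fisherpreseve} to close. The only cosmetic difference is in the exponent bookkeeping at the end: the paper unifies the two regimes $q\gtrless r/(r-1)$ by writing the exponent as $\frac{d}{2}(1-\frac1p)\max\{q,\frac{r}{r-1}\}$ and applying Young once, whereas you apply Jensen twice (first over $i$ using $\delta\leq1$, then over $(t,\omega)$ using $\eta\leq1$) and finish with Young in the product form $a^{1-\eta}b^\eta\leq(1-\eta)a+\eta b$ — note your final step should be phrased this way rather than as $x^\eta\leq1+x$ so the factor $T^{1-\eta}$ distributes cleanly into the two terms, but this is a trivial rephrasing and the argument is sound.
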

\begin{proof} The part (1)  follows by the fact that $f^i_t$ is a probability density for each $i$. More precisely,
	\begin{align*}
		 \|g_t^N\|_{L^1(\mathbb{R}^d)}\leq \frac{1}{N}\sum_{i=1}^N |\tilde w_i^N| \int_{\mathbb{R}^{d}} f_t^i(X_t^{i-1,N},x)\mathd x = \| \tilde w^N\|_{l^1}.
	\end{align*}
The proof  of other parts   is based on Lemma \ref{lemma:sobo}, Lemma \ref{lem:fisherpreseve}, and repeatedly applying H\"older's inequality.
	
	For the  part  $(2)$, by Jensen's inequality and H\"older's inequality , we  find
	\begin{align*}
		\mathbb{E}\int_0^T\| g_t^N\|_{L^p}^{q}\mathd t\leq & \mathbb{E} \int_0^T \bigg( \frac{1}{N}\sum_{i=1}^N |\tilde{w}_i^N| \|f_t^i(X_t^{i-1,N},\cdot)\|_{L^p}\bigg)^q\mathd t
		\\\leq &\mathbb{E} \int_0^T\bigg[ \|\tilde{w}^N\|_{l^r} \bigg( \frac{1}{N}\sum_{i=1}^N  \|f_t^i(X_t^{i-1,N},\cdot)\|_{L^p}^{\frac{r}{r-1}}\bigg)^\frac{(r-1)}{r}\bigg]^q \mathd t .
	\end{align*}
	Then applying the first point of   Lemma \ref{lemma:sobo} gives that
	\begin{align*}
		\mathbb{E}\int_0^T\| g_t^N\|_{L^p}^{q}\mathd t\leq & C \|\tilde{w}^N\|_{l^r}^q  \mathbb{E} \int_0^T\bigg( \frac{1}{N}\sum_{i=1}^N  I \(f_t^i(X_t^{i-1,N},\cdot)\)^{\frac{d}{2}(1-\frac{1}{p})\frac{r}{r-1}}\bigg)^\frac{q(r-1)}{r}\mathd t
		\\\leq &  C \|\tilde{w}^N\|_{l^r}^q \mathbb{E} \int_0^T \bigg(1+ \frac{1}{N}\sum_{i=1}^N  I \(f_t^i(X_t^{i-1,N},\cdot)\)^{\frac{d}{2}(1-\frac{1}{p})\max\{ q,\frac{r}{r-1}\}}\bigg)\mathd t ,
	\end{align*}
	where the constant term  $``1"$  comes from applying Young's inequality and appears only when $q< r/(r-1)$. Observe that the condition \eqref{con:pq1} is equivalent to
	\begin{align*}
		\frac{d}{2}\(1-\frac{1}{p}\)\max\{ q,\frac{r}{r-1}\}\leq 1,
	\end{align*}
	thus the estimate \eqref{eq:S2} is concluded by Lemma \ref{lem:fisherpreseve}.
	
	The  proof of Part (3) is almost the same, except that we use the 2nd part  of Lemma \ref{lemma:sobo} to control $\nabla g_t^N$ and also $\nabla f_t^i(X_t^{i-1,N},\cdot)$, instead of the first one. In this case, the condition \eqref{con:pq2} is equivalent to
	\begin{align*}
		\(\frac{d+1}{2}-\frac{d}{2p}\)\max\{ q,\frac{r}{r-1}\}\leq 1.
	\end{align*}
	We omit the rest of the proof to avoid repeating.
\end{proof}
	
\begin{lemma} \label{lem:gregular}Suppose the same setting as in Lemma \ref{lem:gnfisher} and that $g^N_t$  converges to $g_t$ in the space of distributions $\mathcal{S}'(\mathbb{R}^d)$,  i.e. the  dual space of Schwartz functions, $\mathd t \times \mathd \mathbb{P} $ almost everywhere. 	Then the Sobolev regularity estimates \eqref{eq:S1}, \eqref{eq:S2} and \eqref{eq:S3}  hold for $g$.  In particular,  $g_t(\omega) $ has a density w.r.t. the  Lebesgue measure for a.e. $(t,\omega)$.
\end{lemma}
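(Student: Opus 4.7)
My approach is to combine the uniform estimates for $g^N$ from Lemma \ref{lem:gnfisher} with weak compactness in reflexive Bochner spaces, and then transfer the bounds to the limit $g$ via weak lower semicontinuity of norms. The a.e.\ distributional convergence will enter only to identify the weak limit.

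First, Proposition \ref{prop:entropy} gives $\mathbb{E}\int_0^T I(F_t^N)\, dt \lesssim N$, so the right-hand sides of \eqref{eq:S2}--\eqref{eq:S3} are bounded uniformly in $N$ by a constant depending only on $\sup_N \|\tilde w^N\|_{l^r}$. Consequently, $\{g^N\}$ is bounded in $L^q([0,T]\times\Omega;\, L^p(\mathbb{R}^d))$ for all admissible $(p,q)$, and $\{\nabla g^N\}$ is bounded in the corresponding higher-regularity space. For the interior range $1<p,q<\infty$ these Bochner spaces are reflexive, so by Banach--Alaoglu I extract a subsequence along which $g^{N_k}\rightharpoonup g^{\star}$ and $\nabla g^{N_k}\rightharpoonup h$ weakly; standard distribution theory then forces $h=\nabla g^{\star}$.

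Next I would identify $g^{\star}=g$. For any bounded measurable $\psi:[0,T]\times\Omega\to\mathbb{R}$ and any $\varphi\in C_c^\infty(\mathbb{R}^d)$,
\begin{equation*}
\mathbb{E}\int_0^T \psi(t,\omega)\,\langle g_t^{N_k}, \varphi\rangle\, dt \;\longrightarrow\; \mathbb{E}\int_0^T \psi(t,\omega)\,\langle g^{\star}_t, \varphi\rangle\, dt
\end{equation*}
by the definition of weak convergence. Simultaneously, the pointwise $\mathcal{S}'$-convergence $g^{N_k}_t(\omega)\to g_t(\omega)$, together with the \emph{deterministic} bound $|\langle g_t^{N_k}, \varphi\rangle|\leq \|\tilde w^N\|_{l^1}\|\varphi\|_{\infty}$ extracted from Lemma \ref{lem:gnfisher}(1), gives the same limit with $g_t$ in place of $g^{\star}_t$ by dominated convergence. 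Thus $g^{\star}=g$ in $L^q L^p$, which already implies the density property of $g_t(\omega)$. Weak lower semicontinuity of the norm now yields
\begin{equation*}
\mathbb{E}\int_0^T \|g_t\|_{L^p}^q\, dt \;\leq\; \liminf_{k\to\infty} \mathbb{E}\int_0^T \|g_t^{N_k}\|_{L^p}^q\, dt,
\end{equation*}
and analogously for $\nabla g$, giving \eqref{eq:S2}--\eqref{eq:S3}.

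Finally, for \eqref{eq:S1} I would exploit that $\|g_t^N(\omega)\|_{L^1}\leq \|\tilde w^N\|_{l^1}$ holds pointwise in $(t,\omega)$ (not merely in expectation), since each $f_t^i(X^{i-1,N}_t(\omega),\cdot)$ is a probability density. Combined with the a.e.\ $\mathcal{S}'$-convergence and lower semicontinuity of the total-variation norm under weak-$\star$ convergence on $\mathcal{M}(\mathbb{R}^d)$ (applied for a.e.\ $(t,\omega)$), this gives $\|g_t(\omega)\|_{TV}\leq \sup_N \|\tilde w^N\|_{l^1}$, and the already-established $L^p$ density identifies the TV norm with the $L^1$ norm. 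The main technical subtlety is the identification step above: making sure the weak Bochner-space limit must coincide with the a.e.\ distributional limit $g$. The essential ingredient is the \emph{deterministic}, $\omega$-independent $L^\infty_t L^1_x$ control of $g^N$, which furnishes exactly the domination needed to push the a.e.\ $\mathcal{S}'$-limit inside the expectation and couple it to the weak $L^q L^p$ limit.
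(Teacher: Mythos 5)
Your proof is correct but takes a genuinely different route from the paper. The paper's proof is a direct Fatou-type argument: it writes $\|g_t\|_{L^p}=\sup_{\varphi\in A}\langle\varphi,g_t\rangle$ with $A=\{\varphi\in\mathcal S(\mathbb R^d):\|\varphi\|_{L^{p/(p-1)}}\le1\}$, uses pointwise $\mathcal S'$-convergence and lower semicontinuity of a supremum to get $\|g_t\|_{L^p}\le\liminf_N\|g_t^N\|_{L^p}$ for a.e.\ $(t,\omega)$, and then pushes this through $\mathbb E\int_0^T(\cdot)^q\,dt$ by Fatou's lemma; the same scheme is applied to $\nabla g_t$. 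This is lighter machinery: no weak compactness, no identification of weak limits, no reflexivity of Bochner spaces. Your route instead extracts a weakly convergent subsequence of $\{g^N\}$ in $L^q([0,T]\times\Omega;L^p)$ via Banach--Alaoglu, identifies the weak limit with $g$ by coupling the a.e.\ $\mathcal S'$-convergence to the deterministic $L^1$ domination $\|g_t^N(\omega)\|_{L^1}\le\|\tilde w^N\|_{l^1}$, and then invokes weak lower semicontinuity of the Bochner norm. Both arguments are valid; your identification step (which you correctly flag as the technical crux) is exactly the extra work the paper's Fatou argument sidesteps, since the paper never needs to compare two candidate limits --- lower semicontinuity of the $L^p$-norm under $\mathcal S'$-convergence is applied directly to $g$ itself. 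On the other hand, your treatment also explicitly derives \eqref{eq:S1} (via pointwise TV-lower-semicontinuity under weak-$\star$ convergence and $\|g_t^N(\omega)\|_{L^1}\le\|\tilde w^N\|_{l^1}$), whereas the paper's written proof only works out \eqref{eq:S2}--\eqref{eq:S3} and leaves \eqref{eq:S1} implicit.
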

\begin{proof}
	{ We prove the results by considering the variational representation of the $L^p$-norm. We introduce a functional $\Phi(f) := \sup_{\|\varphi\|_{L^{p'}} \le 1;\varphi\in \mathcal{S}(\mathbb{R}^d)} |\langle \varphi,f \rangle|$, which is a well-defined, lower semi-continuous functional on the tempered distributions $S'(\mathbb{R}^d)$, taking values in $[0, +\infty]$. Its value coincides with the $L^p$-norm if $f \in L^p$. The proof proceeds rigorously with this understanding.}
Let $A$ be the set $\{ \varphi\in \mathcal{S}(\mathbb{R}^d), \|\varphi\|_{L^{\frac{p}{p-1}}}\leq 1\}$. By the  convergence    of $g_t^N$ to $g_t$  in $\mathcal{S}'(\mathbb{R}^d)$ for almost every $(t,\omega)$, we find
  \begin{align*}
 \mathbb{E} \int_0^T \|g_t\|_{L^p}^q\mathd t=& \mathbb{E} \int_0^T \left(\sup_{ \varphi\in A}\<\varphi,g_t\>\right)^q\mathd t\leq
     \mathbb{E} \int_0^T \left( \liminf_{N \rightarrow \infty} \sup_{ \varphi\in A}\<\varphi,g_t^N\>\right)^q\mathd t
     \\= &    \mathbb{E} \int_0^T \left( \liminf_{N \rightarrow \infty} \|g_t^N\|_{L^p}\right)^q\mathd t
  \leq \liminf_{N \rightarrow \infty} \mathbb{E} \int_0^T \|g_t^N\|_{L^p}^q\mathd t,
  \end{align*}
where we used the  lower semi-continuity of  supremum and Fatou's lemma. By \eqref{eq:S2}, Proposition \ref{prop:entropy}, and the condition  $(\mathbb{W}_r)$, we conclude that
\begin{align*}
	\mathbb{E} \int_0^T \|g_t\|_{L^p}^q\mathd t
	\leq  C\|\tilde{w}^N\|_{l^r}^{q} T+ C \liminf_{N \rightarrow \infty}\frac{ \|\tilde{w}^N\|_{l^r}^{q} }{N}\mathbb{E}\int_0^T I(F_t^N) \mathd t <\infty.
\end{align*}

Since  $\nabla g_t^N$ converges to $\nabla g_t $ in  $\mathcal{S}'(\mathbb{R}^d;\mathbb{R}^d)$   for  a.e. $(t,\omega) $,  by lower semi-continuity of $\|\cdot\|_{L^p}$-norm and   Fatou's lemma, we derive that the estimate \eqref{eq:S3} holds for the limit $\nabla g_t$.

{ 	The proof of  \eqref{eq:S1} for $g$ follows a similar logic. The $L^1$-norm also admits a dual representation, $\|f\|_{L^1} = \sup_{\|\varphi\|_{L^\infty} \le 1;\varphi\in \mathcal{S}(\mathbb{R}^d)} \langle  \varphi,f \rangle$, which is ensured by \cite[Exercise 4.25]{brezis2010functional}. Therefore, we have
	\begin{align*}
		\mathbb{E}\left[ \mathrm{ess\,sup}_{t \in [0,T]} \|g_t\|_{L^1} \right] &= \mathbb{E}\left[ \mathrm{ess\,sup}_{t \in [0,T]} \sup_{\|\varphi\|_{L^\infty} \le 1;\varphi\in \mathcal{S}(\mathbb{R}^d)} \langle g_t, \varphi \rangle \right] \\
		&\le \mathbb{E}\left[ \liminf_{N\to\infty} \mathrm{ess\,sup}_{t \in [0,T]} \|g_t^N\|_{L^1} \right] \\
		&\le \liminf_{N\to\infty} \mathbb{E}\left[ \mathrm{ess\,sup}_{t \in [0,T]} \|g_t^N\|_{L^1} \right],
	\end{align*}
	where the first inequality follows from the lower semi-continuity of the essential supremum of convex functions, and the second from Fatou's lemma. The right-hand side is uniformly bounded by our estimates in Lemma \ref{lem:gnfisher}.}
\end{proof}
{ 
	\begin{remark}
	%	The main hypothesis of Lemma \ref{lem:gregular}, namely the convergence of the sequence $\{g_t^N\}$ in  $\mathcal{S}'(\mathbb{R}^d)$,   $\mathd t \times \mathd \mathbb{P} $ almost everywhere, is a strong assumption. We will rigorously verify that this condition is met in our setting in the proof of Lemma \ref{lem:limitpoints} below.
		In Lemma \ref{lem:gregular} we make the extra assumption that the convergence of the sequence $\{g_t^N\}$ in  $\mathcal{S}'(\mathbb{R}^d)$,   $\mathd t \times \mathd \mathbb{P} $ almost everywhere.  This assumption is satisfied in our setting, as can be seen from the proof of Lemma \ref{lem:limitpoints} below. 
	\end{remark}
 }

\section{Mean-fields limits}\label{sec:mf}
This section is devoted to show the mean-field limits of the interacting system \eqref{eq:IPS}, or more precisely the convergence of empirical measures $\mu_N(t)$ and $\tilde \mu_N(t)$ as in \eqref{def:Empirical},   and  completes the proof of Theorem \ref{thm:mfpde} and Theorem \ref{thm:uniquepde}.
\subsection{Tightness}\label{subsec:tightun}
To apply the classical tightness argument (also known as stochastic compactness method), it requires to find a suitable topology, which is weak enough to show the tightness of laws while sufficiently strong such that the equation (primarily the nonlinear singular  part) as a functional  of solutions is continuous.
 However,  the topology for the convergence and the tightness of the empirical measures on $\mathbb{R}^d$ are too weak to ensure the convergence of the nonlinear term. To solve this problem,
 % For instance,  the topology of  Wasserstein space   is generated by continuous functions with a  growth condition, c.f \cite[Definition 6.8]{villani2008optimalbook}.
we consider the joint laws of  the random measures $\{g^N\}$ introduced in Section  \ref{sec:randommeasure}, together with the associated weighted empirical measures, to handle  singular interacting kernels. More precisely, we show the tightness of joint laws of  $\{(\tilde{Q}^N, g^N)\}$, where $\tilde{Q}^N$ defined below is the path version of $\tilde{\mu}_N$.

The definition of tightness and the  Prokhorov theorem
is well-known for probability measures. We recall the generalizations for signed measures for convenience, which could be easily found in the textbook \cite{bogachev2007measure2}.
\begin{definition}
	A family $\mathcal{V}$ of Radon measures on a topological space $\mathcal{Y}$ is called  tight if for every $\eps>0$, there exists a compact set $A_{\eps}$ such that $|\nu|(\mathcal{Y} \backslash A_{\eps})<\eps$ for all $\nu\in \mathcal{V}$.
\end{definition}

The following theorem due to Prokhorov connects tightness,  weak convergence sequences, and compactness,  c.f. \cite[Theorems 8.6.2 and 8.6.7]{bogachev2007measure2}.
\begin{lemma}\label{lem:prokh}
Let $\mathcal{Y}$ be a complete separable metric space and let $\mathcal{V}$ be a family of Radon  measures on $\mathcal{Y}$.  Then the following two statements are equivalent:
\begin{enumerate}
	\item every sequence $\{\nu_N\} \subset\mathcal{V}$  contains  a weakly convergent subsequence;
	\item the family   $\mathcal{V}$ is  tight and uniformly bounded in total variation norm.
\end{enumerate}
 Let $\mathcal{V}\subset \mathcal{M} (\mathcal{Y})$ be a uniformly bounded in total  variation norm and tight  family of Radon measures on  $\mathcal{Y}$ . Then  $\mathcal{V}$  has  compact closure in the weak topology.
\end{lemma}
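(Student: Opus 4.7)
The plan is to reduce the signed-measure Prokhorov theorem to the classical probability-measure version via the Hahn--Jordan decomposition. For each $\nu \in \mathcal{V}$ I would write $\nu = \nu^+ - \nu^-$ with $|\nu| = \nu^+ + \nu^-$, so that $\nu^\pm \leq |\nu|$ as positive measures; hence any tightness or total-variation bound on the family $\{|\nu| : \nu \in \mathcal{V}\}$ transfers immediately to the families of positive and negative parts.

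I would handle the direction $(1) \Rightarrow (2)$ by contradiction. If $\mathcal{V}$ failed to be tight, there would exist $\eps > 0$ and a sequence $\nu_N \in \mathcal{V}$ with $|\nu_N|(\mathcal{Y} \setminus K) \geq \eps$ for every compact $K$; passing to a weakly convergent subsequence $\nu_N \rightharpoonup \nu$ and approximating the characteristic function of $\mathcal{Y} \setminus K$ from below by bounded continuous functions along a compact exhaustion of $\mathcal{Y}$ would contradict the uniform lower bound, since the limit $\nu$ is itself a Radon measure and therefore tight. Uniform boundedness in total variation follows from the uniform boundedness principle applied to $\{\nu_N\}$ acting as linear functionals on $C_b(\mathcal{Y})$, with pointwise boundedness guaranteed by the weak subsequential convergence hypothesis.

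For the main direction $(2) \Rightarrow (1)$, given a sequence $(\nu_N) \subset \mathcal{V}$, I would set $a_N^\pm \assign \nu_N^\pm(\mathcal{Y}) \in [0, M]$ with $M$ the uniform total-variation bound, and pass to a subsequence so that $a_N^\pm \to a^\pm$. When $a^+ > 0$, the rescaled family $\{\nu_N^+/a_N^+\}$ is a tight family of probability measures on the complete separable metric space $\mathcal{Y}$, so the classical Prokhorov theorem yields a further subsequence converging weakly to a probability measure $P^+$; I would argue symmetrically for $\nu_N^-$. Then $\nu_\infty \assign a^+ P^+ - a^- P^-$, with the convention that the corresponding term vanishes if $a^\pm = 0$, is the desired weak accumulation point, since weak convergence of each positive part against $C_b(\mathcal{Y})$ implies weak convergence of their difference.

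For the final assertion on compact closure, the equivalence $(1) \Leftrightarrow (2)$ already furnishes sequential compactness of the closure; since the weak topology on tight, TV-bounded subsets of $\mathcal{M}(\mathcal{Y})$ is metrizable (via a L\'evy--Prokhorov-type metric applied separately to the Hahn components), sequential compactness upgrades to topological compactness. The main delicacy I anticipate lies in the degenerate cases $a^+ = 0$ or $a^- = 0$, which require careful bookkeeping to ensure the limit genuinely lives in $\mathcal{M}(\mathcal{Y})$ with the right total mass and is not contaminated by a ``ghost'' component; the metrizability step is also worth verifying cleanly, but once the Hahn--Jordan reduction is in place both points reduce to standard arguments from the classical positive-measure theory.
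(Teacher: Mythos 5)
The paper does not actually prove this lemma; it cites it directly from Bogachev, Theorems 8.6.2 and 8.6.7, so any genuine argument is by necessity a different route. Your Hahn--Jordan reduction gives a correct and standard proof of the direction $(2)\Rightarrow(1)$, which together with the metrizability observation yields the final compactness assertion (the only part of the lemma the paper subsequently uses).

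However, your argument for $(1)\Rightarrow(2)$, tightness, has a genuine gap. You extract a sequence $\nu_N$ with $|\nu_N|(\mathcal{Y}\setminus K_N)\ge\eps$, pass to a weakly convergent subsequence $\nu_{N_j}\rightharpoonup\nu$, and then propose a Portmanteau-style argument ``approximating $\mathbf{1}_{\mathcal{Y}\setminus K}$ from below by bounded continuous functions.'' That argument controls $\int \varphi\, \mathd\nu_{N_j}$ and hence (in the limit) quantities built from the \emph{signed} measures $\nu_{N_j}$, but it says nothing about $|\nu_{N_j}|(\mathcal{Y}\setminus K)$: a priori, mass may escape to infinity in $\nu_{N_j}^{+}$ and $\nu_{N_j}^{-}$ simultaneously while cancelling in $\nu_{N_j}$, so the hypothesis $|\nu_{N_j}|(\mathcal{Y}\setminus K_{N_j})\ge\eps$ produces no obvious contradiction with $\nu_{N_j}\rightharpoonup\nu$. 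The Hahn--Jordan reduction does not repair this either, because $\nu\mapsto\nu^{\pm}$ is discontinuous in the weak topology (e.g.\ $\delta_{1/n}-\delta_0\rightharpoonup 0$ yet $(\delta_{1/n}-\delta_0)^{+}=\delta_{1/n}\rightharpoonup\delta_0\ne 0^{+}$), so sequential compactness of $\{\nu_N\}$ does not pass to $\{\nu_N^{\pm}\}$. The statement is true, but the standard proofs (as in Bogachev) use a genuinely different mechanism, typically a gliding-hump construction that produces a single $\varphi\in C_b(\mathcal{Y})$ built from bump functions supported on pairwise disjoint annuli escaping to infinity, against which no subsequence of $\nu_N$ can converge; this cannot be replaced by the simple lower-semicontinuity argument you describe. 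A secondary, more cosmetic issue: your metrizability step via ``a L\'evy--Prokhorov-type metric applied separately to the Hahn components'' is not justified for the same discontinuity reason. The cleaner route is to note that on a TV-bounded, tight subset of $\mathcal{M}(\mathcal{Y})$ the $C_b(\mathcal{Y})$-weak topology coincides with the $C_0(\mathcal{Y})$-weak$^*$ topology, which is metrizable on bounded sets since $C_0(\mathcal{Y})$ is separable.
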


We first show the tightness of laws of the empirical measures on the path space  $C([0,T],\mathbb{R}^d)$, which are defined by
\begin{align}
 \tilde{Q}^N (\cdot) = \frac{1}{N} \sum_i \tilde{w}_i^N\delta_{X_i} \in \cM(C([0,T];\mR^d))  \nonumber.
\end{align}
 Notice that $\tilde{\mu}_N(t)=\tilde{Q}^N\circ \pi_t^{-1}$, where   $\pi_t$ for $t\in [0,T]$ is  the canonical projection from $C([0,T],\mathbb{R}^d)$ to $\mathbb{R}^d$ defined by $\pi_t(X)=X(t)$ for $X\in C([0,T],\mathbb{R}^d)$.
Let  $\phi : C
([0, T], \mathbb{R}^d) \rightarrow [0, \infty]$ be the function

\begin{align}
	\phi (X) : = & \sup_{0\leq s < t\leq T} \frac{| X (t) - X (s) |}{(t - s)^{1 -
			\alpha}} + | X (0) |^{\frac{(r-1)\gamma}{r}},\label{nota:phi1}
\end{align}
where $\gamma\in (0,1)$ and $\alpha\in (\max \{\frac{1}{2}, \frac{d}{2p_1}+ \frac{1}{q_1}, \frac{d}{2p_2}+ \frac{1}{q_2}\},1)$, { the later range  equals to $(\max \{\frac{1}{2}, \frac{d}{2p_1}+ \frac{1}{q_1}\},1)$ under the assumption $(\mK_r)$.} The choice for such $\alpha$ ensures that for $\alpha^*\assign \frac{1}{1-\alpha}$,
\begin{align*}
	2(1-\alpha)<1,\quad 1<\alpha^*, \quad  \frac{d}{p_1}+\frac{2}{q_1}+ \frac{2}{\alpha^*}< 2,\quad \frac{d}{p_2}+\frac{2}{q_2}+ \frac{2}{\alpha^*}< 2, \quad \frac{1}{\alpha}= \frac{\alpha^*}{\alpha^*-1}.
\end{align*}
We will apply Corollary \ref{coro:averagesymme}  with $(\alpha^*,p_1,q_1)$  and  $(\alpha^*,p_2,q_2)$  playing the role of $(r,p,q)$, under the condition ($\mathbb{K}_{r}$) to obtain the following  key uniform estimate.
\begin{lemma}\label{prop:empi} Suppose that $(\mH)$, $(\mW_r)$ and $(\mK_r)$ hold for some $r\in (1,\infty]$. Given a family $\{\tilde{w}^N, N\in \mathbb{N}\}$ satisfying the condition $(\mathbb{W}_r)$, it holds that
	\begin{equation}\label{eqt:nononotight}
\sup_N\mathbb{E}\langle \phi, |\tilde{Q}^N| \rangle =	\sup_N \mathbb{E} \int_{C([0,T],\mathbb{R}^d)}\phi(X) |\tilde{Q}^N|(\mathd X)< \infty.
	\end{equation}

\end{lemma}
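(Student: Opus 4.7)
The plan is to decompose $\phi(X) = \psi(X) + \chi(X)$ where $\psi(X) := \sup_{s<t}|X(t)-X(s)|/(t-s)^{1-\alpha}$ and $\chi(X) := |X(0)|^{(r-1)\gamma/r}$. For the easy piece $\chi$, the identity $\tfrac{(r-1)\gamma}{r}\cdot r' = \gamma$ with $r' := r/(r-1)$, combined with H\"older's inequality over $i$ (using conjugates $(r,r')$ and $\|\tilde w^N\|_{l^r}$ bounded) and the initial-moment bound $\sum_i\mathbb{E}\langle X_i(0)\rangle^{\gamma} = O(N)$ from $(\mathbb{H})$, immediately yields $\mathbb{E}\tfrac{1}{N}\sum_i|\tilde w_i^N|\chi(X_i) = O(1)$.

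For $\psi$, use the SDE \eqref{eq:IPS} to write $X_i(t)-X_i(s) = \int_s^t b_i\,d\tau + \sqrt{2}(B_i(t)-B_i(s))$ with $b_i = \tfrac{1}{N}\sum_{j\ne i}w_j^N K(X_i-X_j)$, so $\psi(X_i) \le \psi^{\mathrm{dr}}(X_i) + \sqrt{2}\,\psi(B_i)$; the Brownian seminorm has uniformly bounded first moment since $1-\alpha < 1/2$. Split $K = K_1 + K_2$ and correspondingly $b_i = b_i^1 + b_i^2$. The pathwise inequality $\psi^{\mathrm{dr},K_k}(X_i) \le T^{\alpha - 1/Q}\|b_i^k\|_{L^Q([0,T])}$, valid for any $Q \ge 1/\alpha$, motivates the unified choice $Q := \max\{r', 1/\alpha\}$, ensuring automatically $Q \ge r'$ and $Q/r' \ge 1$.

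Apply H\"older over $j$ with conjugates $(r, r')$ to obtain $|b_i^k(\tau)| \le \|w^N\|_{l^r}(\tfrac{1}{N}\sum_j|K_k|^{r'})^{1/r'}$, raise to the $Q$-th power, and invoke the power-mean inequality (convexity of $x \mapsto x^{Q/r'}$) to get $|b_i^k(\tau)|^Q \le \|w^N\|_{l^r}^Q \cdot \tfrac{1}{N}\sum_j |K_k(X_i-X_j)|^Q$. Integrating in $\tau$, summing over $i$, and taking expectation reduce the estimate to controlling $\tfrac{1}{N^2}\sum_{i\ne j}\mathbb{E}\int_0^T|K_k|^Q\,d\tau$, which is precisely the content of Corollary~\ref{coro:averagesymme} with its parameter chosen as $Q/(Q-1)$---that parameter equals $\alpha^*$ when $Q=1/\alpha$ and $r$ when $Q=r'$. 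Its required hypothesis $d/p_k + 2/q_k \le 2/Q$ is verified in both subcases: for $Q=1/\alpha$ it reads $d/p_k + 2/q_k \le 2\alpha$, provided by the Lemma's choice $\alpha > d/(2p_1) + 1/q_1$ for $K_1$ and automatically from $(\mathbb{K}_r)$ for $K_2$ (since $d/p_2 + 2/q_2 < 1 < 2\alpha$); for $Q=r'$ it reads $d/p_k + 2/q_k \le 2 - 2/r$, which is $(\mathbb{K}_r)$. Combined with the Fisher-information bound $\int_0^T I(F^N_s)\,ds \le CN$ from Proposition~\ref{prop:entropy}, this yields $\tfrac{1}{N}\sum_i \mathbb{E}\|b_i^k\|_{L^Q}^Q = O(1)$.

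A concluding H\"older over $i$ with $(r, r')$ together with Jensen's inequality (valid since $r' \le Q$) converts the $L^Q$-moment bound to $\mathbb{E}\tfrac{1}{N}\sum_i|\tilde w_i^N|\|b_i^k\|_{L^Q} = O(1)$, from which \eqref{eqt:nononotight} follows. The main obstacle is aligning the two interior H\"older inequalities (in time and in the particles $j$) so that the emerging power of $|K_k|$ is admissible via Corollary~\ref{coro:averagesymme} under $(\mathbb{K}_r)$: the unified choice $Q = \max\{r', 1/\alpha\}$ adapts to both regimes $\alpha \le (r-1)/r$ and $\alpha \ge (r-1)/r$ simultaneously, and the Lemma's prescribed interval for $\alpha$ is tailored precisely to make the Corollary applicable with the exponent $\alpha^*$.
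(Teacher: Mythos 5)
Your proposal is correct and takes essentially the same approach as the paper: decompose $\phi$ into drift/noise/initial pieces, handle the time-supremum via H\"older in time, use H\"older over particles against the $l^r$ weights, reduce the drift contribution to averaged moments $\frac{1}{N^2}\sum_{i\neq j}\mathbb{E}\int_0^T|K_k(X_i-X_j)|^{\max\{r',1/\alpha\}}\,\mathd t$, and control them via Corollary~\ref{coro:averagesymme} (with parameters $\alpha^*$ and $r$) together with the uniform Fisher-information bound from Proposition~\ref{prop:entropy}. The only cosmetic difference is that the paper performs a single H\"older inequality jointly over the index pair $(i,j)$, whereas you split it into an inner H\"older over $j$ and a concluding one over $i$; the exponent bookkeeping and the verification of the $(\mathbb{K}_r)$ hypotheses in the two regimes $Q=r'$ and $Q=1/\alpha$ match the paper's.
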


\begin{proof}
By the definition of $\phi$, we indeed need to show
	\begin{align}
	 \sup_N \left( \frac{1}{N} \sum_{i = 1}^N |\tilde{w}_i^N|\, \mathbb{E}\,  | X_i (0) |^{\frac{(r-1)\gamma}{r}} +
		\frac{1}{N} \sum_{i = 1}^N|\tilde{w}_i^N| \, \mathbb{E} \sup_{0\leq s < t\leq T} \frac{| X_i
			(t) - X_i (s) |}{(t - s)^{1 - \alpha}} \right) <\infty. \label{estimate:uni}
	\end{align}
The first summation in the bracket concerns on the $\gamma$-th moments of the initial values. Using H\"older's inequality, we find
\begin{align*}
	\frac{1}{N} \sum_{i = 1}^N |\tilde{w}_i^N|\mathbb{E} | X_i (0) |^{\frac{(r-1)\gamma}{r}}\leq& \|\tilde{w}^N\|_{l^r}\bigg( \frac{1}{N}\sum_{i=1}^N  \[\mathbb{E}  | X_i (0) |^{\frac{(r-1)\gamma}{r}} \]^{\frac{r}{r-1}}\bigg)^{\frac{r-1}{r}}
	\\\leq &\|\tilde{w}^N\|_{l^r} \bigg( \frac{1}{N}\sum_{i=1}^N  \mathbb{E}  | X_i (0) |^{\gamma} \bigg)^{\frac{r-1}{r}},
\end{align*}
which is uniformly bounded under   the condition $(\mathbb{H})$.

We then investigate the second summation.  Observe  that
	\begin{align}
		& \frac{1}{N} \sum_{i = 1}^N |\tilde{w}_i^N|\mathbb{E} \sup_{s<t} \frac{| X_i
			(t) - X_i (s) |}{(t - s)^{1 - \alpha}} \nonumber\\
		= & \frac{1}{N} \sum_{i = 1}^N|\tilde{w}_i^N| \mathbb{E} \sup_{s < t} \frac{\left|
			\int^t_{s} \frac{1}{N} \sum_{j\neq i}w_j^N K (\tau ,X_i(\tau) - X_j(\tau))
			\mathd \tau +\sqrt{2} (B_i (t) - B_i (s)) \right|}{(t - s)^{1 - \alpha}} \nonumber\\
		\leqslant & J_1^N + J_2^N  \nonumber
	\end{align}
	where $J_i^N$, $i=1,2$, are defined by
	\begin{align}
		&J_1^N \assign  \frac{1}{N} \sum_{i =1}^N |\tilde{w}_i^N| \mathbb{E} \sup_{s < t}
		\frac{\left| \int^t_{s}\frac{1}{N} \sum_{j\neq i} w_j^NK (\tau,X_i(\tau) - X_j(\tau)) \mathd \tau
			\right|}{(t - s)^{1-\alpha}}, \nonumber
	\\&	J_2^N \assign  \frac{\sqrt{2}}{N} \sum_{i = 1}^N |\tilde{w}_i^N|\mathbb{E} \sup_{s < t}
		\frac{\left| (B_i (t) - B_i (s)) \right|}{(t -
			s)^{1-\alpha}}\nonumber.
	\end{align}
For $J_1^N$ involving the interactions,  we have
	\begin{align}
		J_1^N \leq
		& \frac{1}{N^2} \sum_{i\neq j} |\tilde{w}_i^Nw_j^N|\mathbb{E} \bigg(\sup_{s
			<t} \frac{\int^t_{s}\left| K(\tau,X_i-X_j)\right|   \mathd \tau}{(t -
			s)^{1 - \alpha}} \bigg) \nonumber\\
	\leq 	& \|w^N\|_{l^r}\|\tilde{w}^N\|_{l^r} \bigg( \frac{1}{N^2}\sum_{i \neq j} \bigg[\mathbb{E}   \bigg(\sup_{s
		<t} \frac{\int^t_{s}\left| K(\tau,X_i-X_j)\right|   \mathd \tau}{(t -
		s)^{1 - \alpha}} \bigg)  \bigg]^{\frac{r}{r-1}}  \bigg)^{\frac{r-1}{r}}\nonumber
	\\\leq & \|w^N\|_{l^r}\|\tilde{w}^N\|_{l^r} \bigg( \frac{1}{N^2}\sum_{i \neq j} \bigg[\mathbb{E}  \int^T_{0}\left| K(t,X_i-X_j)\right|^{\frac{1}{\alpha }}   \mathd t   \bigg]^{\frac{\alpha r}{r-1}}  \bigg)^{\frac{r-1}{r}}\nonumber
	\\\leq &  C \|w^N\|_{l^r}\|\tilde{w}^N\|_{l^r} \bigg( T+ \frac{1}{N^2}\sum_{i \neq j} \mathbb{E}  \int^T_{0}\left| K(t,X_i-X_j)\right|^{\max \{\frac{r}{r-1 }, \frac{1}{\alpha}\}}   \mathd t     \bigg)^{\frac{r-1}{r}},\nonumber
	\end{align}
where the constant $T$ is given by   Young's inequality  $|x|^{\alpha r/(r-1)}\leq |x|+1$ when $\alpha r/(r-1)\leq 1$.
We thus obtain
\begin{align*}
	J_1^N \leq & C  \|w^N\|_{l^r}\|\tilde{w}^N\|_{l^r} \bigg(T+ \frac{1}{N^2}\sum_{i \neq j} \mathbb{E}  \int^T_{0}\big(\left| K(t,X_i-X_j)\right|^{\frac{r}{r-1 }}  +\left| K(t,X_i-X_j)\right|^{\frac{1}{\alpha}} \big) \mathd t     \bigg)^{\frac{r-1}{r}}.
\end{align*}
Using Corollary \ref{coro:averagesymme}, we find $J_1^N$ is bounded by the Fisher information.  That is
\begin{align*}
	J_1^N \leq  C  \|w^N\|_{l^r}\|\tilde{w}^N\|_{l^r} \bigg( C+ \frac{1}{N}\int_0^T I(F_t^N)\mathd t \bigg)^{\frac{r-1}{r}},
\end{align*}
for all $N \in \mathbb{N}$. Proposition \ref{prop:entropy} thus implies that  $J_1^N$ is uniformly bounded for all $N \in \mathbb{N}$.

{ Next, the finiteness of $\sup_N J_2^N$ is a consequence of classical results on the sample path regularity of Brownian motion. Specifically, the expectation of the Hölder norm is bounded,
		\begin{align*}
		J_2^N \leq & \sqrt{2}\|\tilde{w}^N\|_{l^1 }    \left[  \mathbb{E} \sup_{s < t}
		\frac{\left| B_1 (t) - B_1 (s)\right|}{(t -
			s)^{1-\alpha}}  \right]<\infty.
	\end{align*}
		provided $1-\alpha < 1/2$. This is a well-known result that can be established using the Kolmogorov continuity criterion (see \cite{zhang2010stochastic,hu2013multiparameter} for further details). The criterion itself can be derived by the general Garsia-Rodemich-Rumsey inequality \cite{garsia1970real}.} The proof of \eqref{eqt:nononotight} is thus completed, and the result follows.
\end{proof}

	\begin{lemma}\label{thm:tight}  Suppose that $(\mH)$, $(\mW_r)$ and $(\mK_r)$ hold for some $r\in (1,\infty]$.
Given a family $\{\tilde{w}^N, N\in \mathbb{N}\}$ satisfying the condition $(\mathbb{W}_r)$,	the laws of the sequence $\{  \tilde{Q}^N,N\in \mathbb{N}\}$
		 are tight on $\mathcal{\mathcal{M}}(C([0,T],\mathbb{R}^d))$.
	\end{lemma}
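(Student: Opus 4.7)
The plan is to apply Prokhorov's theorem (Lemma \ref{lem:prokh}) in two layers: first build compact subsets of $\mathcal{M}(C([0,T],\mathbb{R}^d))$ out of sublevel sets of $\phi$, and then show that $\tilde{Q}^N$ lives in such a set with arbitrarily high probability uniformly in $N$, using Lemma \ref{prop:empi}.

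\textbf{Step 1: Compact sets in $C([0,T],\mathbb{R}^d)$ via $\phi$.} For each $R>0$ I would consider
\begin{equation*}
K_R := \{X\in C([0,T],\mathbb{R}^d) : \phi(X)\leq R\}.
\end{equation*}
The bound on $|X(0)|^{(r-1)\gamma/r}$ confines the values $X(0)$ to a fixed ball, while the Hölder seminorm bound of exponent $1-\alpha$ gives uniform equicontinuity of the family. By the Arzel\`a--Ascoli theorem $K_R$ is relatively compact in $C([0,T],\mathbb{R}^d)$, and since $\phi$ is lower semicontinuous (it is a supremum of continuous functionals plus a continuous function of $X(0)$), $K_R$ is actually closed, hence compact.

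\textbf{Step 2: Compact sets in $\mathcal{M}(C([0,T],\mathbb{R}^d))$.} For $M_1,M_2>0$ set
\begin{equation*}
\mathcal{V}_{M_1,M_2} := \bigl\{ \nu\in\mathcal{M}(C([0,T],\mathbb{R}^d)) : \|\nu\|_{TV}\leq M_1,\ \langle \phi, |\nu|\rangle \leq M_2 \bigr\}.
\end{equation*}
This family is uniformly bounded in total variation by $M_1$, and it is tight as a family of signed Radon measures: for any $\varepsilon>0$ pick $R=M_2/\varepsilon$, then Markov's inequality gives
\begin{equation*}
|\nu|(K_R^c) \leq R^{-1}\,\langle \phi, |\nu|\rangle \leq M_2/R < \varepsilon \qquad \forall\, \nu\in \mathcal{V}_{M_1,M_2}.
\end{equation*}
By Lemma \ref{lem:prokh}, $\mathcal{V}_{M_1,M_2}$ is relatively compact in $\mathcal{M}(C([0,T],\mathbb{R}^d))$. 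Its closure also satisfies the same bounds because $\nu\mapsto \|\nu\|_{TV}$ and $\nu\mapsto \langle\phi,|\nu|\rangle$ are lower semicontinuous under the weak topology (the former by testing against $C_b$-functions of supnorm $\leq 1$, the latter by approximating $\phi$ from below by bounded continuous functions).

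\textbf{Step 3: Probability control.} The total variation of $\tilde{Q}^N$ is deterministically bounded:
\begin{equation*}
\|\tilde{Q}^N\|_{TV} = \frac{1}{N}\sum_{i=1}^N |\tilde{w}_i^N| \leq \|\tilde{w}^N\|_{l^1}\leq \|\tilde{w}^N\|_{l^r},
\end{equation*}
which is $O(1)$ by assumption $(\mathbb{W}_r)$, so one can pick $M_1$ independent of $N$. For the $\phi$-weighted mass, Lemma \ref{prop:empi} gives $C := \sup_N \mathbb{E}\langle \phi, |\tilde{Q}^N|\rangle <\infty$, hence by Markov's inequality, given any $\varepsilon>0$, the choice $M_2 = C/\varepsilon$ yields
\begin{equation*}
\mathbb{P}\bigl(\tilde{Q}^N\notin \mathcal{V}_{M_1,M_2}\bigr) \leq \mathbb{P}\bigl(\langle \phi, |\tilde{Q}^N|\rangle > M_2\bigr) \leq \frac{C}{M_2} = \varepsilon
\end{equation*}
uniformly in $N$. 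Combined with Step 2, this is the definition of tightness for the laws of $\tilde{Q}^N$ on $\mathcal{M}(C([0,T],\mathbb{R}^d))$.

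\textbf{Expected main difficulty.} Steps 1 and 3 are essentially routine given Lemma \ref{prop:empi}. The subtle point is Step 2: one must be careful that Lemma \ref{lem:prokh} is being applied with the correct topology on $\mathcal{M}(C([0,T],\mathbb{R}^d))$ (the weak topology generated by $C_b$), and that the natural bounds defining $\mathcal{V}_{M_1,M_2}$ are preserved (or only mildly relaxed) under limits. This reduces to verifying the lower semicontinuity of the total variation and of $\langle \phi, |\cdot|\rangle$ under weak convergence, which follows from writing each as a supremum of weakly continuous linear functionals.
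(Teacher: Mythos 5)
Your proof is correct and follows essentially the same route as the paper: introduce the tightness function $\phi$, use Arzel\`a--Ascoli and the generalized Prokhorov theorem (Lemma \ref{lem:prokh}) to manufacture compact subsets of $\mathcal{M}(C([0,T],\mathbb{R}^d))$, and then bound $\mathbb{P}(\tilde{Q}^N \notin K)$ via Markov's inequality and Lemma \ref{prop:empi}. The only cosmetic differences are that the paper bundles the two constraints into a single functional $\Phi(\mu) = \langle\phi,|\mu|\rangle + \|\mu\|_{TV}$ rather than using two separate thresholds $M_1, M_2$, and the paper sidesteps your lower semicontinuity discussion in Step 2 by simply passing to the closure $\overline{A_R}$, which is still compact by Lemma \ref{lem:prokh} and contains $A_R$, so the probability bound transfers directly without needing to know whether $A_R$ is itself closed.
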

\begin{proof}
	{ 
	To prove the tightness of the laws of $\{\tilde{Q}^N\}$, we apply a generalized Prokhorov's theorem by identifying a suitable coercive functional.

	Let $\Phi:\mathcal{M}(C([0,T]),\mathbb{R}^d)\rightarrow [0,\infty]$ be the functional defined by
	\begin{align*}
		\Phi(\mu) \assign \left\langle \phi, |\mu|\right\rangle+ \|\mu\|_{TV},
	\end{align*}
	where $\phi$ is a tightness function with precompact level sets (cf. \cite{dupuis2011ldp}). We claim that $\Phi$ itself has precompact level sets. Indeed, for any given $R>0$, the level set $A_R \assign \{\mu \vert \Phi(\mu)\leq R\}$ consists of measures that are uniformly bounded in total variation norm and are tight (as can be deduced from the Chebyshev inequality $|\mu| (\{\phi> c\})\leq \frac{1}{c} \left\langle \phi,|\mu| \right\rangle$). By the generalized Prokhorov's theorem (Lemma \ref{lem:prokh}), the closure of $A_R$ is compact in the weak topology.

	Next, we verify that the expectation of this functional is uniformly bounded for our sequence. By $(\mathbb{W}_r)$ and Lemma \ref{prop:empi}, we have
	\begin{align*}
		\sup_{N \in \mathbb{N}} \mathbb{E} [ \Phi (\tilde{Q}^N)] \le \sup_{N \in \mathbb{N}} \frac{1}{R} \left( \|\tilde{w}^N\|_{l^1}+\mathbb{E} \langle \phi, |\tilde{Q}^N| \rangle \right) < \infty.
	\end{align*}

	Since $\{\mathcal{L}(\tilde{Q}^N)\}$ is a sequence of probability laws on a space where we have found a coercive functional with precompact sublevel sets and a uniform bound on its expectation, the tightness of the sequence follows. }
\end{proof}

%Since $\mathcal{M}(\mathbb{R}^d)$ with total variation norm   is the dual space of    $C_0(\mathbb{R}^d)$ (Riesz–Markov theorem), then  the closed ball $\mathcal{M}_a(\mathbb{R}^d)$ is compact and metrizable in the weak topology, c.f. \cite[Section 3]{brezis2010functional}.  The separability of $\mathcal{M}_a(\mathbb{R}^d )$ is deduced from the separability of $\mathbb{R}^d$, c.f. \cite[Theorem 8.9.4]{bogachev2007measure2}. Therefore, $\mathcal{M}_a(\mathbb{R}^d)$ and $C([0,T],\mathcal{M}_a(\mathbb{R}^d))$ are    Polish spaces. with $a\assign \sup_N \|\tilde{w}^N\|_{l^1}$.

The next  result concerns on tightness of laws of $\{g^N\}$.
\begin{lemma}\label{lem:tightini} Suppose that $(\mH)$, $(\mW_r)$ and $(\mK_r)$ hold for some $r\in (1,\infty]$. There exists $p^*>1$ such that
	the laws of $\{g^N,N\in \mathbb{N}\}$ are tight on $L^{p^*}_w([0,T]\times \mathbb{R}^d)$, where $L^{p^*}_w([0,T]\times \mathbb{R}^d)$ denotes $L^{p^*}$-space endowed with the weak topology.
\end{lemma}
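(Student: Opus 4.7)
The strategy is to reduce tightness on $L^{p^\ast}_w([0,T]\times \mathbb{R}^d)$ to a uniform $L^{p^\ast}$-moment bound on $g^N$, which is already within reach via Lemma \ref{lem:gnfisher} and Proposition \ref{prop:entropy}. Recall that for $1<p^\ast<\infty$, the Lebesgue space $L^{p^\ast}([0,T]\times\mathbb{R}^d)$ is separable and reflexive, so by the Banach--Alaoglu--Kakutani theorem every closed ball $B_R\assign\{f:\|f\|_{L^{p^\ast}}\leq R\}$ is weakly compact, and since the predual is separable the weak topology on $B_R$ is metrizable. Hence the balls $B_R$ serve as compact sets in $L^{p^\ast}_w$.

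First, I would choose an admissible exponent $p^\ast>1$. Setting $p=q=p^\ast$ in condition \eqref{con:pq1} of Lemma \ref{lem:gnfisher}(2) gives the two requirements
\begin{equation*}
\frac{d+2}{p^\ast}\geq d,\qquad \frac{d}{p^\ast}+\frac{2(r-1)}{r}\geq d,
\end{equation*}
i.e.\ $p^\ast\leq \min\bigl\{\tfrac{d+2}{d},\,\tfrac{rd}{r(d-2)+2}\bigr\}$, with the second bound understood as $\tfrac{d}{d-2}$ if $r=\infty$ and $d\geq 3$, and as $+\infty$ if $d=2$. For every $r\in(1,\infty]$ and $d\geq 2$ this range contains some $p^\ast>1$ (e.g.\ take $p^\ast$ close enough to $1$), and it also lies in the Sobolev range of Lemma \ref{lemma:sobo}.

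Next, with this choice of $p^\ast$, Lemma \ref{lem:gnfisher}(2) yields
\begin{equation*}
\mathbb{E}\,\|g^N\|_{L^{p^\ast}([0,T]\times\mathbb{R}^d)}^{p^\ast}
= \mathbb{E}\int_0^T \|g^N_t\|_{L^{p^\ast}}^{p^\ast}\,\mathrm{d}t
\leq C\|\tilde w^N\|_{l^r}^{p^\ast}T + C\frac{\|\tilde w^N\|_{l^r}^{p^\ast}}{N}\,\mathbb{E}\int_0^T I(F_t^N)\,\mathrm{d}t.
\end{equation*}
Combining the hypothesis $(\mathbb{W}_r)$ (so that $\|\tilde w^N\|_{l^r}=O(1)$) with the uniform Fisher information estimate \eqref{est:uniformfisher} from Proposition \ref{prop:entropy} (which gives $\tfrac1N\mathbb{E}\int_0^T I(F_t^N)\,\mathrm{d}t=O(1)$ thanks to the initial condition $(\mathbb{H})$), I obtain the uniform bound
\begin{equation*}
M\assign \sup_{N\in\mathbb{N}}\mathbb{E}\,\|g^N\|_{L^{p^\ast}([0,T]\times\mathbb{R}^d)}^{p^\ast}<\infty.
\end{equation*}

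Finally, for any $\varepsilon>0$, pick $R_\varepsilon$ with $R_\varepsilon^{p^\ast}\geq M/\varepsilon$. By Markov's inequality,
\begin{equation*}
\mathbb{P}\bigl(g^N\notin B_{R_\varepsilon}\bigr)
=\mathbb{P}\bigl(\|g^N\|_{L^{p^\ast}}>R_\varepsilon\bigr)
\leq R_\varepsilon^{-p^\ast}\,\mathbb{E}\|g^N\|_{L^{p^\ast}}^{p^\ast}\leq \varepsilon
\end{equation*}
for every $N\in\mathbb{N}$, which together with the weak compactness of $B_{R_\varepsilon}$ established above proves tightness of $\{g^N\}$ on $L^{p^\ast}_w([0,T]\times\mathbb{R}^d)$. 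The only real obstruction is a bookkeeping one, namely verifying that the constraints from Lemma \ref{lem:gnfisher} admit a diagonal exponent $p=q=p^\ast>1$ for all admissible $r\in(1,\infty]$ and $d\geq 2$; the estimate itself is then a direct consequence of the uniform Fisher information control.
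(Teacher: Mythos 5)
Your proof is correct and follows essentially the same route as the paper: choose $p^\ast=p=q>1$ satisfying condition \eqref{con:pq1}, apply Lemma \ref{lem:gnfisher}(2) together with Proposition \ref{prop:entropy} and $(\mathbb{W}_r)$ to obtain a uniform $L^{p^\ast}([0,T]\times\mathbb{R}^d)$-moment bound, then conclude tightness by Markov's inequality. Your additional paragraph spelling out why closed balls are compact (and metrizable) in $L^{p^\ast}_w$ is a useful explicit check that the paper leaves implicit, and you also correctly invoke part (2) of Lemma \ref{lem:gnfisher}, whereas the paper's text says ``part (1)'' (evidently a typo, since the displayed constraints are those of part (2)).
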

\begin{proof}
	By the part $(1)$  of Lemma \ref{lem:gnfisher}, one may choose $p^*>1$ such that $p^*=p=q$ such that
	\begin{align*}
		 \frac{d}{p^*}+ \frac{2(r-1)}{r}\geq d,\quad \frac{d+2}{p^*}\geq d,
	\end{align*}
	which equals to
	\begin{align*}
		1< p^* \leq \min \left\lbrace     \frac{d}{d-\frac{2(r-1)}{r}},\frac{d+2}{d}\right\rbrace,\quad p*<\infty .
	\end{align*}
Thus Lemma \ref{lem:gnfisher} shows there exists such  $p^*>1$ such that
	\begin{align*}
		 \sup_N \mathbb{E} \|g^N\|_{L^{p^*}([0,T]\times \mathbb{R}^d)}^{p^*} \leq C+ C\,   \bigg( \sup_N \|\tilde{w}^N\|_{l^r}^{p^*}\bigg) \bigg( \sup_N \frac{1}{N}\mathbb{E} \int_0^T I(F_t^N )\mathd t\bigg)< \infty.
	\end{align*}
This  uniform bound of Fisher information is ensured  by Proposition \ref{prop:entropy}.  { This implies that the sequence of laws $\{\mathcal{L}(g^N)\}$ is concentrated on a bounded subset of $L^{p^*}([0,T]\times \mathbb{R}^d)$.
	Since $L^{p^*}([0,T]\times \mathbb{R}^d)$ is the dual of $L^{(p^*)'}([0,T]\times \mathbb{R}^d)$ and is therefore a reflexive Banach space for $p^* \in (1, \infty)$, the Banach-Alaoglu theorem states that its bounded subsets are precompact in the weak topology.
	Therefore, the sequence of laws $\{\mathcal{L}(g^N)\}$ is supported on a precompact set, which implies that the sequence is tight in the weak topology of $L^{p^*}([0,T]\times \mathbb{R}^d)$. The proof is thus completed.}
%Furthermore,   applying Chebyshev's inequality yields that
%\begin{align*}
	% \sup_N \mathbb{P} \( g^N(t,x)\in L^{p^*}([0,T \times \mathbb{R}^d]), \|g^N\|_{L^{p^*}}>R \)\leq \frac{1}{R^{p^*} }\sup_N  \mathbb{E} \|g^N\|_{L^{p^*}([0,T]\times \mathbb{R}^d)}^{p^*} \xrightarrow{R\rightarrow \infty}0.
%\end{align*}
%The proof is thus completed.
\end{proof}

\subsection{Identify the limits}\label{subsec:identify}Now we extract a convergent  subsequence of    $\{(Q^N,v^N,\tilde{Q}^N,g^N)\}$, where $(Q^N,v^N)$ is defined by replacing  $\tilde{w}^N$ in the definition of $(\tilde{Q}^N,g^N)$  by $w^N$,  and   identify the limiting point as a solution to \eqref{eqt:coupled pde}.

Observe that  $\{w^N\}$ is just a specific example of $\{\tilde{w}^N\}$, by Lemma  \ref{thm:tight} and Lemma \ref{lem:tightini},  one may deduce that the sequence of laws of   $\{(Q^N,v^N,\tilde{Q}^N,g^N), N \in \mathbb{N}\}$ is tight on  the space  $\mathcal{X}$ defined by
\begin{align*}
\mathcal{X}\assign 	\mathcal{M}\(C([0,T],\mathbb{R}^d)\)\times L^{p^*}_w([0,T]\times \mathbb{R}^d) \times 	\mathcal{M}\(C([0,T],\mathbb{R}^d)\)\times L^{p^*}_w([0,T]\times \mathbb{R}^d)
\end{align*}
By the generalized Skorokhod Representation Theorem, also known as the Jakubowski Theorem ({ see the original work \cite[Theorem 2]{jakubowski1998skorokhod} and} the formulation in \cite[Theorem 2.7.1]{hofmanova2018spde}), we deduce the following result.
\begin{proposition}\label{prop:skorohod}
	There exists a subsequence of $\{(Q^N,v^N,\tilde{Q}^N,g^N), N \in \mathbb{N}\}$, without relabeling
	for simplicity, and a probability space $(\Omega^*,\mathcal{F}^*,\mathbb{P}^*)$ with $\mathcal{X}$-valued random variables $\{(Q^{*N},v^{*N},\tilde{Q}^{*N},g^{*N})$, $N \in \mathbb{N}\}$ and $(Q, v,\tilde{Q},g)$ such that
	\begin{enumerate}
		\item For each $N$,  the law of $(Q^N,v^N,\tilde{Q}^N,g^N)$ coincides  with the law of $(Q^{*N},v^{*N},\tilde{Q}^{*N},g^{*N})$.
		
		\item  The sequence of random variables $(Q^{*N},v^{*N},\tilde{Q}^{*N},g^{*N})$ converges to  $(Q, v,\tilde{Q},g)$  in $\mathcal{X}$  $\mP^*$-almost surely.
	\end{enumerate}
\end{proposition}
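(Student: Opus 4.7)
The plan is to verify the hypotheses of the Jakubowski version of the Skorokhod representation theorem on the product space $\mathcal{X}$ and then invoke it directly. The key non-triviality is that $\mathcal{X}$ is not a Polish space (both the weak topology on $\mathcal{M}(C([0,T],\mathbb{R}^d))$ and the weak topology on $L^{p^*}([0,T]\times\mathbb{R}^d)$ are only quasi-Polish), which rules out the classical Skorokhod theorem and forces us to use Jakubowski's generalization.

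First I would establish tightness of the joint law of $(Q^N,v^N,\tilde{Q}^N,g^N)$ on $\mathcal{X}$. Since the product of tight families is tight on the product space, it suffices to check tightness componentwise. For the measure components $Q^N$ and $\tilde{Q}^N$, Lemma \ref{thm:tight} applies directly (noting that the special choice $\tilde{w}^N = w^N$ gives tightness of $Q^N$ as well), using the tightness function $\phi$ together with the uniform total-variation bound $\|\tilde{Q}^N\|_{TV} \leq \|\tilde{w}^N\|_{l^1}$. For the density components $v^N$ and $g^N$, Lemma \ref{lem:tightini} yields uniform $L^{p^*}$ bounds in expectation, and since bounded sets in $L^{p^*}$ are relatively compact in $L^{p^*}_w$ (by reflexivity and the Banach--Alaoglu theorem), Chebyshev's inequality gives tightness on $L^{p^*}_w([0,T]\times\mathbb{R}^d)$.

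Next I would verify the structural assumption needed for Jakubowski's theorem, namely that each factor of $\mathcal{X}$ admits a countable family of continuous real-valued functions separating points. On $\mathcal{M}(C([0,T],\mathbb{R}^d))$ with the weak topology, one can take the maps $\mu \mapsto \langle \varphi_k, \mu\rangle$ for a countable dense family $\{\varphi_k\}$ in $C_b(C([0,T],\mathbb{R}^d))$ (obtained via a suitable metric); on $L^{p^*}_w$, one takes the duality pairing against a countable dense family in $L^{p^*/(p^*-1)}([0,T]\times\mathbb{R}^d)$. The product of such families separates points in $\mathcal{X}$.

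With tightness and the separability-of-continuous-functionals property in hand, Jakubowski's theorem (as stated in \cite[Theorem 2.7.1]{hofmanova2018spde}) furnishes, along a subsequence, a probability space $(\Omega^*,\mathcal{F}^*,\mathbb{P}^*)$ and $\mathcal{X}$-valued random variables $(Q^{*N},v^{*N},\tilde{Q}^{*N},g^{*N})$ and $(Q,v,\tilde{Q},g)$ with the equality in law stated in (1) and the $\mathbb{P}^*$-almost sure convergence in $\mathcal{X}$ stated in (2). I expect the main obstacle to be purely bookkeeping: confirming that $L^{p^*}_w$ and the signed-measure space with the weak topology fit into Jakubowski's framework (topological spaces with a countable separating family of continuous functions), as opposed to any deep analytical point.
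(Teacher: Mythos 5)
Your proposal follows the same route as the paper: tightness componentwise (Lemmas \ref{thm:tight} and \ref{lem:tightini}), then Jakubowski's generalized Skorokhod representation theorem, with the key hypothesis being that $\mathcal{X}$ admits a countable separating family of continuous functions. The only place you deviate is in how you would verify that hypothesis for the signed-measure factor, and your sketch there is imprecise. You propose to test against ``a countable dense family $\{\varphi_k\}$ in $C_b(C([0,T],\mathbb{R}^d))$ (obtained via a suitable metric),'' but $C([0,T],\mathbb{R}^d)$ is a non-compact Polish space, so $C_b$ of it is not separable in the sup norm and no such countable dense family exists. What does exist is a countable \emph{separating} (not dense) family, e.g.\ bounded Lipschitz functions built from a countable dense set of points and rational parameters, and this suffices since a finite signed Borel measure on a Polish space is determined by its integrals against bounded Lipschitz functions. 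The paper sidesteps this construction entirely by citing Koumoullis and Sapounakis (\cite[Theorem 4.1]{koumoullis1984two}) for the fact that $\mathcal{M}(C([0,T],\mathbb{R}^d))$ with the weak topology is countably separated. You correctly identify this as the ``bookkeeping'' obstacle, and the $L^{p^*}_w$ part of your argument (separable predual) matches the paper exactly; just replace the ``dense in $C_b$'' claim with a genuine countable separating subfamily or the citation.
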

\begin{remark}To apply the Jakubowski  theorem, one needs to check a  topological  property, that is the space $\mathcal{X}$ is countably separated. This property is closely related to submetrizability (or metrizability for compact spaces).    For our case,  the weak topology of the Polish space $L^{p*}$ is clearly countably separated since its dual space is separable.   As to  $\mathcal{M}(C([0,T],\mathbb{R}^d))$, the required topological property follows by   Koumoullis and  Sapounakis \cite[Theorem 4.1]{koumoullis1984two}.
\end{remark}

For simplicity, we omit the superscript $*$  in the following text.

Now we are able to deduce the convergence of  $\{\tilde{\mu}_N\}$ (and the specific sequence $\{\mu_N\}$). Define  $\tilde{\mu}\assign (\tilde{Q}\circ \pi_t^{-1})_{t\in [0,T]}$.

\begin{corollary}\label{lem:tightmun}
	Up to a subsequence, the convergent sequence of the empirical measure processes $\tilde{\mu}_N$ converges to  $\tilde{\mu}$ in $C([0,T],\mathcal{M}(\mathbb{R}^d))$ almost surely.
\end{corollary}
\begin{proof}
	Recall that the canonical projection  $\pi_t$  from $C([0,T],\mathbb{R}^d)$ to $\mathbb{R}^d$. Clearly, $\tilde{\mu}$ belongs to the space  $C([0,T],\mathcal{M}(\mathbb{R}^d))$.  Given any function  $\varphi\in C_b(\mathbb{R}^d)$, it is  straightforward to check  that  the family of  functions $\{\Phi_t, t\in [0,T]\}$ on $C([0,T],\mathbb{R}^d)$, defined by
	$	\Phi_t(X)\assign\varphi(X_t) $ for $X\in C([0,T],\mathbb{R}^d)$,
	is uniformly bounded and pointwise equicontinuous. Therefore,  we have
	\begin{align*}
		 \sup_{t\in[0,T]}   \left|      \int_{\mathbb{R}^{d}} \varphi(x) \( \tilde{\mu}_N(t)(\mathd x )- \tilde{\mu}_t(\mathd x )\) \right|   =&	\sup_{t\in [0,T]} \left| \int_{\mathbb{R}^{d}} \varphi(x)   \( \tilde{Q}^N\circ \pi_t^{-1} (\mathd x )  -\tilde{Q}\circ \pi_t^{-1}(\mathd x )  \)         \right|
		\\=& \sup_{t\in[0,T]}\left|\int_{C([0,T],\mathbb{R}^d)} \Phi_t (X )  \( \tilde{Q}^N(\mathd X )  -\tilde{Q}(\mathd X )  \) \right| \xrightarrow{N\rightarrow \infty}0,
	\end{align*}
where the convergence follows by the convergence of $\tilde{Q}^N$ and applying  \cite[Exercise 8.10.134]{bogachev2007measure2}.
\end{proof}

The next lemma connects the limiting points of weakly merging sequences $\{\tilde{\mu}_N\}$ and $g^N$.
\begin{lemma}\label{lem:limitpoints}The  subsequence $\{g_t^N\}$ converges  to $\tilde{\mu}_t $ in $\mathcal{S}'(\mathbb{R}^d)$ for almost every $(t,\omega)$.  Furthermore, $g_t$ is a density of $\tilde{\mu}_t$ for almost every $(t,\omega)$.
\end{lemma}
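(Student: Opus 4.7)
The plan is to combine three inputs: the weakly merging property of Lemma \ref{lem:close}, the a.s.~convergence $\tilde{\mu}_N \to \tilde{\mu}$ in $C([0,T],\mathcal{M}(\mathbb{R}^d))$ from Corollary \ref{lem:tightmun}, and the weak-$L^{p^*}$ convergence $g^N \to g$ from Proposition \ref{prop:skorohod}. First I would verify that the concentration bound $\mathbb{P}^*(|\langle \varphi,\tilde{\mu}_N(t) - g^N_t\rangle|>\epsilon)\leq 2\exp(-CN^{\theta_r}\epsilon^2)$ underlying Lemma \ref{lem:close} carries over to the Skorokhod space, since it depends only on the marginal law of the pair $(\tilde{Q}^N,g^N)$ for each fixed $N$, which is preserved. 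Borel-Cantelli then gives, for every $\varphi \in C_b(\mathbb{R}^d)$, $\langle \varphi,\tilde{\mu}_N(t)-g^N_t\rangle \to 0$ for $dt\times d\mathbb{P}^*$-a.e.\ $(t,\omega)$. Combining with the uniform-in-$t$ convergence $\sup_t|\langle \varphi,\tilde{\mu}_N(t)-\tilde{\mu}_t\rangle| \to 0$ from Corollary \ref{lem:tightmun} yields $\langle \varphi,g^N_t\rangle \to \langle \varphi,\tilde{\mu}_t\rangle$ for a.e.\ $(t,\omega)$, scalar $\varphi$ by scalar $\varphi$.

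To upgrade this to convergence in $\mathcal{S}'(\mathbb{R}^d)$, I would use the separability of $\mathcal{S}(\mathbb{R}^d)$: fix a countable dense subset $\{\varphi_k\}\subset\mathcal{S}(\mathbb{R}^d)$ and take the intersection of the null sets to get a common full-measure set of $(t,\omega)$ on which $\langle \varphi_k,g^N_t\rangle \to \langle \varphi_k,\tilde{\mu}_t\rangle$ for every $k$. Extending to an arbitrary $\varphi\in\mathcal{S}(\mathbb{R}^d)$ rests on the deterministic uniform bound $\|g^N_t\|_{L^1}\leq\|\tilde{w}^N\|_{l^1}\leq C$ together with the lower-semicontinuity estimate $\|\tilde{\mu}_t\|_{TV}\leq\liminf_N\|\tilde{\mu}_N(t)\|_{TV}\leq C$, allowing a standard $\epsilon/3$ approximation in the $L^\infty$-norm (using $\mathcal{S}\hookrightarrow C_0 \subset L^\infty$).

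For the density statement, I would test against products $\eta(t,x)=\psi(t)\varphi(x)$ with $\psi\in C_c([0,T])$ and $\varphi\in\mathcal{S}(\mathbb{R}^d)\cap L^{p^*/(p^*-1)}(\mathbb{R}^d)$. On one hand, the weak-$L^{p^*}$ convergence of $g^N$ in the time-space cylinder yields $\int_0^T\psi(t)\langle \varphi,g^N_t\rangle\,dt \to \int_0^T\psi(t)\langle \varphi,g_t\rangle\,dt$. On the other, the a.e.\ convergence just established, together with the uniform majorant $|\psi(t)\langle \varphi,g^N_t\rangle|\leq \|\psi\|_\infty\|\varphi\|_\infty\|\tilde{w}^N\|_{l^1}$, allows dominated convergence to give the same integrals with limit $\int_0^T\psi(t)\langle \varphi,\tilde{\mu}_t\rangle\,dt$. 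Varying $\psi$ and $\varphi$ identifies $\tilde{\mu}_t = g_t\,dx$ for a.e.\ $(t,\omega)$, which is the second claim; note that $g_t \in L^1$ by Lemma \ref{lem:gregular}.

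The main obstacle is bookkeeping across the Skorokhod transfer together with the two distinct modes of convergence at play: the pointwise-in-$(t,\omega)$ scalar convergence against $C_b$-test functions (coming from Lemma \ref{lem:close}) and the weak-$L^{p^*}$ time-space convergence (coming from Skorokhod). Reconciling them so that the same limit $\tilde{\mu}_t$ of $\tilde{\mu}_N(t)$ coincides with the $L^{p^*}$ limit $g_t$ of $g^N_t$ is the delicate point, handled above by testing against product functions and exploiting uniform $L^1$/TV bounds; everything else is a routine application of density and dominated convergence.
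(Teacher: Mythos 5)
Your proposal is correct and follows essentially the same route as the paper: both decompose $\langle\varphi,g^N_t\rangle$ via the weakly merging property (Lemma \ref{lem:close}) plus the a.s.\ convergence $\tilde{\mu}_N\to\tilde{\mu}$ to obtain scalar a.e.\ convergence, then test the weak-$L^{p^*}$ limit $g$ against $C_0$ functions on $[0,T]\times\mathbb{R}^d$ to identify $g_t\,dx=\tilde{\mu}_t$ via a countable dense set (the paper invokes Riesz--Markov--Kakutani and uniqueness of disintegration where you vary $\psi$ and $\varphi$, which amounts to the same thing). Your extra care about transferring the concentration bound through the Skorokhod construction and the explicit $\varepsilon/3$ upgrade to $\mathcal{S}'$ are minor points the paper leaves implicit, not a different argument.
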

\begin{proof}  For a.e. $(t,\omega)\in [0,T]\times \Omega$ and any $\varphi\in C_0(\mathbb{R}^d)$, we have
	\begin{align}
		\left\langle \varphi, \tilde{\mu}_t\right\rangle = & \lim_{N \rightarrow \infty} \left\langle \varphi, \tilde{\mu}_N(t)\right\rangle \nonumber
		\\=& \lim_{N \rightarrow \infty} \left( \left\langle \varphi, \tilde{\mu}_N(t)- g_t^N\right\rangle + \left\langle\varphi ,g_t^N \right\rangle \right) \nonumber
		\\= & \lim_{N \rightarrow \infty}  \left\langle\varphi ,g_t^N \right\rangle.\nonumber
	\end{align}
	The last equality follows by the fact that $\{\tilde{\mu}_N(t)\}$ and $\{g_t^N\}$ are weakly merging, proved in Lemma \ref{lem:close}. { 	This establishes the convergence for each test function $\varphi$ on a set of full measure, $\mathcal{N}_\varphi^c$, that may depend on $\varphi$. To show that this convergence holds on a single null set for all test functions simultaneously, we use the fact that the pre-dual space of $\mathcal{M}(\mathbb{R}^d)$, $C_0(\mathbb{R}^d)$, is separable. Let $\{\varphi_k\}_{k=1}^\infty$ be a countable dense subset of $C_0(\mathbb{R}^d)$. Let $\mathcal{N} = \bigcup_{k=1}^\infty \mathcal{N}_{\varphi_k}$. As a countable union of null sets, $\mathcal{N}$ is a null set. For any $(t,\omega) \notin \mathcal{N}$, the convergence holds for all $\varphi_k$. By a standard density argument, this implies that for any $(t,\omega) \notin \mathcal{N}$, we have
		$$
		\langle \varphi, \tilde{\mu}_t \rangle = \lim_{N\to\infty} \langle \varphi,g_t^N \rangle \quad \text{for all } \varphi \in C_0(\mathbb{R}^d).
		$$ This implies the  first part of the statement. 
	
	%	Recall that  in our definition of weakly merging, the null set  of   $\omega$ depends on the test function,
	On the other hand, from Proposition \ref{prop:skorohod}, we know that the subsequence $\{g^N\}$ converges to a limit $g$ w.r.t. the weak topology of $L^p([0,T] \times \mathbb{R}^d)$. Combining this with the a.e.-in-time convergence established
	above, we can identify the limit $g$.  For any test function $\varphi \in C_c((0,T) \times \mathbb{R}^d)$, 
 we have
	\begin{align*}
\int_0^T \int_{\mathbb{R}^d} \varphi(t,x) g_t(x) \dif x \dif t = \lim_{N\to\infty} \int_0^T \int_{\mathbb{R}^d} \varphi(t,x) g^N_t(x) \dif x \dif t = \int_0^T \langle \varphi(t,\cdot),\tilde{\mu}_t \rangle \dif t,
	\end{align*}
almost surely. This shows that the two finite measures on the product space, $ g(t,x)\dif x\dif t$ and $ \dif\tilde{\mu}_t(x)\dif t$, coincide when tested against a large class of functions. By a standard density argument (using the separability of the test function space), this implies that the two measures are in fact identical, almost surely.} Finally, by the uniqueness of disintegration (we refer to  \cite[Lemma 10.4.3]{bogachev2007measure2} for the result on  signed measures), we conclude that  $g_t(x)\mathd x= \tilde{\mu}_t(\mathd x )$  for almost every $(t,\omega)$.
%Choosing a countable dense subset of  $C_0(\mathbb{R}^d)$ leads to   $g_t(x) \mathd x\times  \mathd t=\tilde{\mu}_t(\dif x)\times \mathd t $ in $\mathcal{M}([0,T]\times \mathbb{R}^d)$ almost surely.  Here we used the Riesz-Markov-Kakutani representation theorem (see  \cite[Section 7.10]{bogachev2007measure2}), which characterizes $\mathcal{M}(\mathbb{R}^d)$ as  the space of bounded linear functionals on $C_0(\mathbb{R}^d)$. 
	%Given any  set $A_1\subset \mathbb{R}^d$ of Lebesgue measure zero and any $\eps>0$, since the Lebesgue measure is regular,  one can  find an open set $A_2$ such that $A_1\subset A_2$ and $\lambda(A_2)<\lambda(A_1)+\eps$.  To find the absolute continuity of $\tilde{\mu}_t$ with respect to the Lebesgue meaure, we need to show $\tilde{\mu}_t(A_1)=0$.  Firstly, we find
\end{proof}
In the following, we shall not distinguish $g$ and $\tilde{\mu}$. The previous lemma together with Lemma \ref{lem:gregular} gives the following.
\begin{corollary}\label{coro:regu}
	The Sobolev regularity estimates \eqref{eq:S1}, \eqref{eq:S2} and \eqref{eq:S3}  hold for $g$.
\end{corollary}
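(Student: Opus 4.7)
The plan is to combine Lemma \ref{lem:limitpoints} with Lemma \ref{lem:gregular}, which together give the claim essentially immediately. Lemma \ref{lem:limitpoints} tells us that along the subsequence provided by Proposition \ref{prop:skorohod}, the random measures $g_t^N$ converge to $\tilde{\mu}_t$ in $\mathcal{S}'(\mathbb{R}^d)$ for a.e.\ $(t,\omega) \in [0,T] \times \Omega$, and moreover that $g_t(x)\,\dif x = \tilde{\mu}_t(\dif x)$ as measures (so we may identify $g$ with $\tilde{\mu}$). In particular, $g_t^N \to g_t$ in $\mathcal{S}'(\mathbb{R}^d)$ for a.e.\ $(t,\omega)$, which is precisely the hypothesis of Lemma \ref{lem:gregular}.

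Applying Lemma \ref{lem:gregular} then yields that the three Sobolev regularity estimates \eqref{eq:S1}, \eqref{eq:S2}, \eqref{eq:S3} are inherited from $g^N$ by the limit $g$. Recall the argument used there: one exploits the duality formula $\|h\|_{L^p} = \sup\{\langle \varphi, h\rangle : \varphi \in \mathcal{S}(\mathbb{R}^d),\ \|\varphi\|_{L^{p/(p-1)}} \leq 1\}$ together with the $\mathcal{S}'$-convergence to obtain the pointwise lower semicontinuity $\|g_t\|_{L^p} \leq \liminf_N \|g_t^N\|_{L^p}$ in $(t,\omega)$, and then applies Fatou's lemma in both $t$ and $\omega$. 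The relevant uniform-in-$N$ bounds on the right-hand side come from Lemma \ref{lem:gnfisher} combined with the uniform Fisher information estimate \eqref{est:uniformfisher} and the assumption $(\mathbb{W}_r)$ on the weights. The corresponding bound for $\nabla g$ follows identically, since distributional convergence of $g^N_t$ to $g_t$ automatically transfers to their distributional gradients.

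No genuine obstacle remains at this stage, as both ingredients have already been established. The only point worth flagging is that passing to the subsequence from Proposition \ref{prop:skorohod} is harmless, since the ultimate goal of the compactness argument is merely to exhibit at least one accumulation point $(v,g)$ of $(\mu_N,\tilde{\mu}_N)$ with the claimed regularity. With Corollary \ref{coro:regu} in hand, one possesses exactly the Sobolev integrability needed to make sense of the nonlinear convolution terms $K * v$ tested against $g$ and $v$ in the distributional formulation of \eqref{eqt:coupled pde}, which is what the subsequent part of Section \ref{sec:mf} will exploit.
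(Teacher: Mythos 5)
Your proposal is correct and follows precisely the paper's route: the corollary is deduced by noting that Lemma \ref{lem:limitpoints} furnishes the $\mathcal{S}'$-convergence of $g^N_t$ to $g_t$ a.e.\ in $(t,\omega)$, which is exactly the hypothesis of Lemma \ref{lem:gregular}. Your additional recap of the duality/Fatou argument inside Lemma \ref{lem:gregular} and the remark that passing to a subsequence is harmless are accurate but not strictly necessary.
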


Now we are in the position to identify the limiting point $(v,g)$.
\begin{proposition}\label{prop:limit} Suppose that $(\mH)$ and $(\mK_r)$ hold for some $r\in (1,\infty]$.
 Given two sequences $\{\tilde{w}^N, N\in \mathbb{N}\}$ and  $\{{w}^N, N\in \mathbb{N}\}$   satisfying the condition $(\mathbb{W}_r)$, each limiting point  $(v,g)$ obtained from Proposition \ref{prop:skorohod} is a solution to \eqref{eqt:coupled pde} in the sense of Definition \ref{def:pde}.
\end{proposition}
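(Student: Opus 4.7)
The plan is to pass to the limit $N\to\infty$ in the weak (martingale) formulation of \eqref{eq:IPS}. For $\varphi \in C_c^\infty(\mathbb R^d)$, It\^o's formula applied to $\varphi(X_i(t))$, multiplied by $\tilde w_i^N/N$ and summed over $i$, gives
\[
\langle \varphi, \tilde\mu_N(t)\rangle - \langle\varphi, \tilde\mu_N(0)\rangle = \int_0^t \langle\Delta\varphi, \tilde\mu_N(s)\rangle\,ds + \int_0^t \Pi_N(\varphi,s)\,ds + M_N^\varphi(t),
\]
with $\Pi_N(\varphi, s) = N^{-2}\sum_{i\neq j} \tilde w_i^N w_j^N\,\nabla\varphi(X_i(s))\cdot K(X_i(s)-X_j(s))$ and $M_N^\varphi$ a continuous martingale. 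An analogous identity holds with $w_i^N$ in place of $\tilde w_i^N$. The quadratic variation of $M_N^\varphi$ is bounded by $2\|\nabla\varphi\|_\infty^2 T\,\|\tilde w^N\|_{l^2}^2/N$, which is $o(1)$ under $(\mathbb W_r)$ for any $r>1$ (using $\|\tilde w^N\|_{l^2}^2 \leq \|\tilde w^N\|_{l^r}^r\|\tilde w^N\|_{l^\infty}^{2-r}$ and $\|\tilde w^N\|_{l^\infty}\lesssim N^{1/r}$ when $r<2$), so $M_N^\varphi\to 0$ in $L^2$. The heat term and initial condition pass to the limit by Corollary~\ref{lem:tightmun} on the probability space from Proposition~\ref{prop:skorohod}.

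The main task is the singular interaction $\Pi_N$. Mollify $K = K_1+K_2$ as $K^\delta = K_1^\delta + K_2^\delta$ with $\|K_i - K_i^\delta\|_{L^{p_i}_{q_i}} \to 0$ as $\delta\to 0$. Combining weighted H\"older (which produces the factor $\|\tilde w^N\|_{l^r}\|w^N\|_{l^r}$ from $N^{-2}\sum_{i,j}|\tilde w_i^N w_j^N|^r = \|\tilde w^N\|_{l^r}^r\|w^N\|_{l^r}^r$) with the averaging estimate Corollary~\ref{coro:averagesymme} applied to $\tilde K = K_i - K_i^\delta$ and with the uniform Fisher information bound $\sup_N N^{-1}\mathbb E\int_0^T I(F_t^N)\,dt < \infty$ from Proposition~\ref{prop:entropy}, one obtains
\[
\mathbb E\int_0^t |\Pi_N(\varphi,s)-\Pi_N^\delta(\varphi,s)|\,ds \lesssim \|\nabla\varphi\|_\infty\|\tilde w^N\|_{l^r}\|w^N\|_{l^r}\bigl(\|K_1-K_1^\delta\|_{L^{p_1}_{q_1}} + \|K_2-K_2^\delta\|_{L^{p_2}_{q_2}}\bigr),
\]
uniformly in $N$, hence $o_\delta(1)$ uniformly in $N$. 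At fixed $\delta$,
\[
\Pi_N^\delta(\varphi,s) = \iint \nabla\varphi(x)\cdot K^\delta(x-y)\,\tilde\mu_N(s,dx)\mu_N(s,dy) + R_N^\delta(s),
\]
where the diagonal remainder satisfies $|R_N^\delta(s)| \lesssim |K^\delta(0)|\|\tilde w^N\|_{l^2}\|w^N\|_{l^2}/N$, which is $o_N(1)$ for each fixed $\delta$. Because $K^\delta(x-y)\nabla\varphi(x)$ is bounded continuous on $\mathbb R^{2d}$ and $(\tilde\mu_N,\mu_N)\to(g,v)$ jointly in $C([0,T],\mathcal M(\mathbb R^d))^{\otimes 2}$ with uniformly bounded total variation (by Corollary~\ref{lem:tightmun} applied to both sequences on the common probability space), product weak convergence of finite signed measures sends the integral to $\iint \nabla\varphi(x)\cdot K^\delta(x-y)\,g(s,x)v(s,y)\,dx\,dy$.

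Finally, sending $\delta\to 0$ on the limiting object uses the Sobolev regularity of $g$ and $v$ from Corollary~\ref{coro:regu} combined with H\"older and Lemma~\ref{lemma:lpqf} to remove the mollification, yielding $\int_0^t \langle g(s,\cdot)\,\nabla\varphi\cdot(K*v)(s,\cdot),1\rangle\,ds$. Repeating the entire argument with $w_i^N$ in place of $\tilde w_i^N$ produces the $v$-equation, so $(v,g)$ is a distributional solution to \eqref{eqt:coupled pde}. The membership $(v,g)\in C([0,T],\mathcal M(\mathbb R^d))^{\otimes 2}\cap L^\infty([0,T],L^1(\mathbb R^d))^{\otimes 2}$ follows from Corollary~\ref{lem:tightmun} and Lemma~\ref{lem:gnfisher}(1), and the quantitative $L^p_q$-bound required by Definition~\ref{def:pde} is precisely Corollary~\ref{coro:regu}. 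The \emph{main obstacle} is to coordinate the double limit $N\to\infty$ then $\delta\to 0$ in the singular interaction without exchangeability; this is resolved by the uniform Fisher information estimate of Proposition~\ref{prop:entropy} together with its non-exchangeable averaging consequence Corollary~\ref{coro:averagesymme}, which together produce truncation errors that are uniformly small in $N$.
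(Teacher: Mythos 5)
Your proposal is correct and follows essentially the same route as the paper: pass to the limit in the martingale formulation, show the noise vanishes via a quadratic-variation bound, and handle the singular interaction by mollifying $K$ and using the non-exchangeable averaging estimate (Corollary~\ref{coro:averagesymme}) together with the uniform Fisher information (Proposition~\ref{prop:entropy}) to make the truncation error small uniformly in $N$, while the Sobolev regularity of the limit (Corollary~\ref{coro:regu}) controls the truncation error in the limiting equation. Your treatment of the diagonal remainder $R_N^\delta$ and the $l^2$-weight bound in the martingale estimate is slightly more explicit than the paper's, but the structure and the key ingredients are identical.
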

\begin{proof}
Clearly %$\tilde{\mu}_N$ satisfies the following equality
\begin{align}
M_t^N(\varphi)=\left\langle \varphi, \tilde{\mu}_N(t) \right\rangle - &    \left\langle  \varphi,\tilde{\mu}_N(0) \right\rangle - \int_0^t \left\langle \Delta \varphi,\tilde{\mu}_N(s)  \right\rangle\mathd s  -\int_0^t \left\langle \nabla \varphi \cdot K*\mu_N(s),\tilde{\mu}_N(s) \right\rangle  \mathd s \label{iden:1}
%	\\&+\frac{1}{N}\sum_{i = 1}^N\int_0^t\nabla\varphi(X_i) \mathd B_i\quad \mathbb{P}-a.s.,
\end{align}
is a martingale w.r.t. the filtration generated by $\tilde \mu_N$ and $\mu_N$ for $\varphi\in  C^{\infty}_0(\mathbb{R}^d)$. Observe that  the covariance of martingale $M_t^N(\varphi)$ is of $O(\frac{1}{N})$ by the independence of Brownian motions, we thus have
\begin{align}
	M_t^N(\varphi)\rightarrow 0 ,  \quad \mbox{as } \, N \to \infty, \nonumber
\end{align}
in probability. Up to a subsequence, the martingale  converges to zero almost surely.  

 Since $\tilde{\mu}_N$ converges to $g$ (equivalently to $\tilde{\mu}$)  in $C([0,T],\mathcal{M}(\mathbb{R}^d))$, letting $N\to\infty$ on the both sides of the equality \eqref{iden:1}
leads to
\begin{align}
	\left\langle \varphi, g_t\right\rangle = &\lim_{N \rightarrow \infty}	\left\langle \varphi, \tilde{\mu}_N(t) \right\rangle \nonumber
	\\= &    \left\langle  \varphi,g_0\right\rangle + \int_0^t \left\langle \Delta \varphi,g_s  \right\rangle\mathd s  +\lim_{N \rightarrow \infty} \int_0^t \left\langle \nabla \varphi \cdot K*\mu_N(s),\tilde{\mu}_N(s) \right\rangle  \mathd s .\label{iden:2}
\end{align}
It suffices to identify the  limits of the interacting term.  The main difficulty  is the  lack of continuity of the singular interacting  term with respect to the weak topology in $\mathcal{M}(\mathbb{R}^d)$.  Fortunately, with the estimates Proposition \ref{prop:entropy} and  Corollary \ref{coro:regu}, later we shall show
\begin{align}
\lim_{N \rightarrow \infty} \int_0^t \left\langle \nabla \varphi \cdot K*\mu_N(s),\tilde{\mu}_N(s) \right\rangle  \mathd s
=  \int_0^t\langle \nabla \varphi\cdot  K* v_s,g_s\rangle \mathd s . \label{iden:3}
\end{align}

To obtain \eqref{iden:3}, we approximate $K=K_1+K_2$ by $K_{\eps}=K_{1,\eps}+K_{2,\eps}$ as in the proof of Proposition \ref{prop:entropy}, where $K_{1,\eps}$ and $K_{2,\eps}$, $\eps\in (0,1)$,  are  smooth and compactly  supported functions  satisfying
\begin{align*}
 \|K_{1,\eps}-K_1\|_{L^{p_1}_{q_1}}\rightarrow 0;\quad  \|K_{2,\eps}-K_2\|_{L^{p_2}_{q_2}}\rightarrow 0 ,
\end{align*}
for $p_1,p_2<\infty$.
When $p_1=\infty$ or $p_2=\infty$, we first  truncate $K$ by letting $K= K1_{|\cdot|\leq R}+K1_{|\cdot|>R}$, then proceed the regularization on the local term $K1_{|\cdot|\leq R}$.  The term  $K1_{|\cdot|>R}$ is controlled by finite moments of particles and causes no difficulty in singularity,  we ingore  this term in the following.
Therefore, one may divide the singular interacting  term into a continuous functional on $C([0,T],\mathcal{M}(\mathbb{R}^d))$  and a correction. More precisely,
\begin{align}
\int_0^t \left\langle \nabla \varphi \cdot K*\mu_N(s),\tilde{\mu}_N(s) \right\rangle  \mathd s = \int_0^t \left\langle \nabla \varphi \cdot K_{\eps}*\mu_N(s),\tilde{\mu}_N(s) \right\rangle  \mathd s + R_{\eps, \varphi}^{N},\nonumber
\end{align}
where  the correction $R_{\eps,\varphi}^{N}$ is taken as
	\begin{align}
		R_{\eps,\varphi}^{N}=& \int_0^t \left\langle \nabla \varphi \cdot [K_1-K_{1,\eps}]*\mu_N(s),\tilde{\mu}_N(s) \right\rangle  \mathd s + \int_0^t \left\langle \nabla \varphi \cdot [K_2-K_{2,\eps}]*\mu_N(s),\tilde{\mu}_N(s) \right\rangle  \mathd s \nonumber.
	\end{align}
Similarly, the notation  $R_{\eps,\varphi}$  stands for
\begin{align*}
	R_{\eps,\varphi}\assign & \int_0^t\langle \nabla \varphi\cdot  K* v_s,g_s\rangle \mathd s -  \int_0^t\langle \nabla \varphi\cdot  K_{\eps}* v_s,g_s\rangle \mathd s
	\\=& \int_0^t\langle \nabla \varphi\cdot  [K_1-K_{1,\eps}]* v_s,g_s\rangle \mathd s + \int_0^t\langle \nabla \varphi\cdot  [K_2-K_{2,\eps}]* v_s,g_s\rangle \mathd s .
\end{align*}

 We now claim that for each $\varphi \in C_b^2(\mathbb{R}^d)$,
 \begin{align}
 \mathbb{E} \bigg( 	\sup_{t\in [0,T]} \left| R_{\eps,\varphi}(t)\right| \bigg)\xrightarrow{\eps\rightarrow 0}0;\quad \sup_N \mathbb{E} \bigg( 	\sup_{t\in [0,T]} \left| R_{\eps,\varphi}^N(t)\right| \bigg)\xrightarrow{\eps\rightarrow 0}0.\label{iden:4}
 \end{align}
This  uniform convergence of the corrections is the key ingredient to  deduce  \eqref{iden:3}. Indeed,  the approximations for the kernel $K$ implies 
\begin{align*}
&	  \left| \int_0^t \left\langle \nabla \varphi \cdot K*\mu_N(s),\tilde{\mu}_N(s) \right\rangle  \mathd s -
 \int_0^t\langle \nabla \varphi\cdot  K* v_s,g_s\rangle \mathd s\right|
 	\\\leq &  \left| \int_0^t \left\langle \nabla \varphi \cdot K_{\eps}*\mu_N(s),\tilde{\mu}_N(s) \right\rangle  \mathd s -
 	\int_0^t\langle \nabla \varphi\cdot  K_{\eps}* v_s,g_s\rangle \mathd s\right| +\left| R_{\eps,\varphi}^N(t)\right| + \left| R_{\eps,\varphi}(t) \right| .
\end{align*}
{ 		The first absolute value at the second line involves the limit of the bilinear term as $N \to \infty$. It is important to emphasize that the convergence of this integral relies on the specific topology of the space $C([0,T]; \mathcal{M}(\mathbb{R}^d))^2$. By Proposition \ref{prop:skorohod}, we are on a probability space where $(\mu_N, \tilde{\mu}_N) \to (v,g)$ almost surely. This mode of convergence can be understood as a form of ~``strong convergence in time" combined with ~``weak convergence in space".
	
	Specifically, for a fixed sample point (outside a null set), the convergence in $C([0,T]; \mathcal{M}(\mathbb{R}^d))$ implies that for every $s \in [0,T]$, the measures $\mu_N(s)$ and $\tilde{\mu}_N(s)$ converge weakly to $v_s$ and $g_s$ respectively. This ensures the pointwise-in-time convergence of the integrand:
	$$
	\langle \nabla\varphi * K_\varepsilon * \mu_N(s), \tilde{\mu}_N(s) \rangle \to \langle \nabla\varphi * K_\varepsilon * v_s, g_s \rangle, \quad \text{for every } s \in [0,T].
	$$
	Furthermore,  by the dominated convergence theorem, the integral over $[0,t]$ also converges. This justifies why the term vanishes as $N \to \infty$.} Thus for $\eps>0$
\begin{align}
&	 \lim_{N \rightarrow \infty}\mathbb{E} \left| \int_0^t \left\langle \nabla \varphi \cdot K*\mu_N(s),\tilde{\mu}_N(s) \right\rangle  \mathd s -
\int_0^t\langle \nabla \varphi\cdot  K* v_s,g_s\rangle \mathd s\right| \nonumber
\\\leq &\lim_{N \rightarrow \infty}  { \mathbb{E}}\left| \int_0^t \left\langle \nabla \varphi \cdot K_{\eps}*\mu_N(s),\tilde{\mu}_N(s) \right\rangle  \mathd s -
\int_0^t\langle \nabla \varphi\cdot  K_{\eps}* v_s,g_s\rangle \mathd s\right| \nonumber
\\&+\sup_N \mathbb{E} \left| R_{\eps,\varphi}^N(t)\right| + \mathbb{E} \left| R_{\eps,\varphi}(t) \right| \nonumber
\\\leq & \sup_N \mathbb{E} \left| R_{\eps,\varphi}^N(t)\right| + \mathbb{E} \left| R_{\eps,\varphi}(t) \right| .\nonumber
\end{align}
Choosing $\eps$ sufficient small  and applying \eqref{iden:4}, we arrive at \eqref{iden:3}.

	Now   it remains to prove the claim \eqref{iden:4}.
	
	Recall the definition of $R_{\eps,\varphi}$,  we have
	 \begin{align}
	 	\mathbb{E} \bigg( 	\sup_{t\in [0,T]} \left| R_{\eps,\varphi}(t)\right| \bigg) \leq  &\|\nabla\varphi\|_{L^{\infty}}\mathbb{E}\bigg(
	 	\int_0^T \int_{\mathbb{R}^{d}}   \left| [K_1-K_{1,\eps}]* v_t(x)g_t(x)\right| \mathd x \mathd t \bigg)  \nonumber
	 	\\&+\|\nabla\varphi\|_{L^{\infty}}\mathbb{E}\bigg(
	 	\int_0^T \int_{\mathbb{R}^{d}}   \left| [K_2-K_{2,\eps}]* v_t(x)g_t(x)\right| \mathd x \mathd t\bigg)\nonumber
	 \\	\assign& J_1^{\eps}+J_2^{\eps} .\nonumber
	\end{align}
	For $J_1^\eps$, applying Young's inequality for the convolution of two functions and H\"older's inequality gives
	\begin{align}
		J_1^{\eps}\leq & C\mathbb{E}\bigg( \int_0^T \|K_1-K_{1,\eps}\|_{L^{p_1}}\|v_t\|_{L^{1}}\|g_t\|_{L^p}\mathd t\bigg) \quad \quad \nonumber
		\\\leq & C\|K_1-K_{1,\eps}\|_{L^{p_1}_{q_1}}\mathbb{E}   \|g\|_{L^{p}_q},\quad \quad \quad \quad \quad \quad \quad \quad \quad\(\frac{1}{p_1}+ \frac{1}{p}=\frac{1}{q_1}+\frac{1}{q}=1\).\nonumber
	\end{align}
The condition $(\mathbb{K}_r)$  together with  the  relationship between $(p,q)$ and $(p_1,q_1)$ exactly leads to the condition \eqref{con:pq1}, so that we can apply  Corollary \ref{coro:regu} to find $\mathbb{E}   \|g\|_{L^{p}_q}<\infty$.  We thus have
\begin{align*}
	J_1^{\eps}\leq C\|K_1-K_{1,\eps}\|_{L^{p_1}_{q_1}}\xrightarrow{ \eps\rightarrow 0}0.
\end{align*}
Since it does not involve the divergence of $ K_1$,  the computation for $K_1$ applies to the less singular part  $K_2$ as well, with ${p_1,q_1}$ replaced by $(p_2,q_2)$. We  obtain the convergence of $J_2 ^{\eps}$ and arrive at
\begin{align}
	\mathbb{E} \bigg( 	\sup_{t\in [0,T]} \left| R_{\eps,\varphi}(t)\right| \bigg)\xrightarrow{\eps\rightarrow 0}0.\nonumber
\end{align}

The second uniform  convergence  in \eqref{iden:4} is similar. Since     it concerns  on $N$-particles,  the regularity result we shall apply is Proposition \ref{prop:entropy} instead of Corollary \ref{coro:regu}. Again, the technical result Corollary \ref{coro:averagesymme}  will be used  to handle non-exchangeability.   We start with a simple bound for $|R^N_{\eps,\varphi}|$,
	\begin{align*}
		\mathbb{E} \bigg( \sup_{t\in [0,T]}	|R_{\eps,\varphi}^{N}|\bigg) \leq & \|\nabla\varphi\|_{L^{\infty}} \mathbb{E} \bigg( \int_0^T \frac{1}{N^2}\sum_{i \neq j} |\tilde{w}_i^N||w_j^N| \left| \[K_1-K_{1,\eps}\](X_i-X_j)\right|   \mathd t \bigg)
		\\& +\|\nabla\varphi\|_{L^{\infty}} \mathbb{E} \bigg(\int_0^T \frac{1}{N^2}\sum_{i \neq j} |\tilde{w}_i^N||w_j^N| \left| \[K_2-K_{2,\eps}\](X_i-X_j)\right|   \mathd t \bigg)
		\\\assign & J_1^{\eps,N}+ J_2^{\eps,N}.
	\end{align*}
 We only give the details for the bound of  $J_1^{\eps,N }$ explicitly and the required bound for $J_2^{\eps,N }$ follows similarly.  First, applying H\"older's inequality  w.r.t. the sum over $i,j$ leads to
 \begin{align*}
 	J_1^{\eps,N }=&\|\nabla\varphi\|_{L^{\infty}}\int_0^T  \frac{1}{N^2}\sum_{i \neq j} \mathbb{E} \left( |\tilde{w}_i^N||w_j^N| \left| \[K_1-K_{1,\eps}\](X_i-X_j)\right|  \right)  \mathd t
 	\\\leq &C_{\varphi}  \int_0^T   \|w^N\|_{l^r}\|\tilde{w}^N\|_{l^r} \bigg(\frac{1}{N^2}\sum_{i \neq j} \mathbb{E}  \left| \[K_1-K_{1,\eps}\](X_i-X_j)\right|^{\frac{r}{r-1}}   \bigg) ^{\frac{r-1}{r}}\mathd t  .
 \end{align*}
Since the two sequences $\{w^N\}$ and $\{\tilde{w}^N\}$ are uniformly bounded in $l^r$, there exists a universal constant $C>0$ such that
\begin{align*}
	J_1^{\eps,N }\leq C \int_0^T   \bigg(\frac{1}{N^2}\sum_{i \neq j} \mathbb{E}  \left| \[K_1-K_{1,\eps}\](X_i-X_j)\right|^{\frac{r}{r-1}}   \bigg) ^{\frac{r-1}{r}}\mathd t  .
\end{align*}
Applying Corollary \ref{coro:averagesymme} with $|K_1-K_{1,\eps}|$ playing the role of $\tilde{K}$, we get
\begin{align*}
		J_1^{\eps,N }\leq C\|K_1-K_{1,\eps}\|_{L^{p_1}_{q_1}} \int_0^T  \(1+ \frac{1}{N}I(F_t^N)\)\mathd t  .
\end{align*}
By Proposition \ref{prop:entropy}, Assumptions $(\mH)$, $(\mK_r)$  and the convergence of $K_{1,\eps}$ to $K_1$, we conclude that
\begin{align*}
		J_1^{\eps,N }\leq C\|K_1-K_{1,\eps}\|_{L^{p_1}_{q_1}} \xrightarrow{\eps\rightarrow 0}0.
\end{align*}
The claim \eqref{iden:4}  is thus proved.
\end{proof}
\begin{proof}[Proof of Theorem \ref{thm:mfpde}]
	{ The proof is a combination of our main technical results. By Proposition \ref{prop:skorohod}, for any sequence of integers, there exists a subsequence (which we do not relabel) and a probability space on which $Q^N$ (and $\tilde{Q}^N$) converges almost surely  to a random measure ${Q}$ (and $\tilde{Q}$). This procedure allows us to characterize any accumulation point of the original sequence.
		By the continuous mapping established in Corollary \ref{lem:tightmun}, the corresponding weighted empirical measures $\mu^N$ (and $\tilde{\mu}^N$) also converge almost surely to the corresponding limit $\mu$ (and $\tilde{\mu}$), which is the time marginal of ${Q}$ (and $\tilde{Q}^N$). Finally, Proposition \ref{prop:limit} identifies this limit as a solution to the PDE system. The required regularity estimates for the solution, as per Definition \ref{def:pde}, are guaranteed by Corollary \ref{coro:regu}.}
\end{proof}
\subsection{Uniqueness}
In this section, we prove the uniqueness of the mean-field system \eqref{eqt:coupled pde}.  We divide  Theorem \ref{thm:uniquepde}  into the following Theorem \ref{thm:zzz} and Theorem \ref{thm:xxx}.

\begin{theorem}\label{thm:zzz}
There exists a unique solution $(v,g)\in C([0,T],\mathcal{M}(\mathbb{R}^d))^2$ to \eqref{eqt:coupled pde} in the sense of Definition  \ref{def:pde},   if  the kernel 	$K $ belongs to $    L^{q_2}([0,T],L^{p_2}(\mathbb{R}^d))$ with
	\begin{align}
  \frac{d}{p_2}+ \frac{2}{q_2}+\frac{1}{r}\leq 1,\quad  \frac{d}{p_2}+ \frac{2}{q_2}<1.\label{rela:2}
	\end{align}
\end{theorem}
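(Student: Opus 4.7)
The statement splits into existence and uniqueness, and we treat them separately. For existence, the cleanest route is to invoke Theorem \ref{thm:mfpde} with the trivial decomposition $K_1 \equiv 0$, $K_2 = K$, which fits condition $(\mathbb{K}_r)$ under the present hypothesis. Given $v_0, g_0 \in L^1(\mathbb{R}^d)$, one fixes a positive probability density $\rho$ with full support and finite $\gamma$-moment and Boltzmann entropy (for instance a mollification of $|v_0|+|g_0|+e^{-|x|^2}$ normalized to unit mass), samples $X_1(0),\dots,X_N(0)$ i.i.d.\ from $\rho$, and sets $w_i^N = (v_0/\rho)(X_i(0))$, $\tilde w_i^N = (g_0/\rho)(X_i(0))$, truncated at a slowly growing threshold if needed so that $(\mathbb{W}_r)$ holds. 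By the strong law of large numbers the two weighted empirical measures converge almost surely to $v_0$ and $g_0$, while $F_0^N = \rho^{\otimes N}$ satisfies $(\mathbb{H})$ at the cost of a constant times $N$. Theorem \ref{thm:mfpde} then yields a pair $(v,g)$ solving \eqref{eqt:coupled pde} in the sense of Definition \ref{def:pde}.

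For uniqueness the key observation is that the $v$-equation is self-contained. Let $v^1,v^2$ be two solutions sharing the initial datum $v_0$, and set $w = v^1-v^2$. Then $w(0)=0$ and
$$\partial_t w - \Delta w = -\mathrm{div}\bigl(w\,(K*v^1) + v^2\,(K*w)\bigr),$$
whose mild form reads
$$w(t) = -\int_0^t \nabla e^{(t-s)\Delta}\cdot\bigl[w(s)(K*v^1(s)) + v^2(s)(K*w(s))\bigr]\,\mathrm{d}s.$$
Applying the heat-semigroup gradient bound $\|\nabla e^{t\Delta} f\|_{L^p}\lesssim t^{-\frac{1}{2}-\frac{d}{2}(\frac{1}{q}-\frac{1}{p})}\|f\|_{L^q}$, Young's convolution inequality $\|K*v\|_{L^\alpha}\le \|K\|_{L^{p_2}}\|v\|_{L^\beta}$, and H\"older in time, I would choose exponents $(p,q)$ inside the regularity range guaranteed by Definition \ref{def:pde} and calibrated against the subcriticality $d/p_2+2/q_2<1$. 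This produces an estimate
$$\|w\|_{L^q([0,T];L^p)} \le C\,T^{\kappa}\bigl(\|v^1\|_{L^{q_2}_t L^{p_2}_x} + \|v^2\|_{L^{q_2}_t L^{p_2}_x}\bigr)\|w\|_{L^q([0,T];L^p)}$$
with $\kappa>0$ furnished by the strict gap $1-d/p_2-2/q_2$. For $T$ small this forces $w=0$; iterating in time gives $w\equiv 0$ on $[0,T]$.

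With $v$ now determined, the difference $h = g^1-g^2$ solves the linear Fokker--Planck equation $\partial_t h = \Delta h - \mathrm{div}(h(K*v))$ with $h(0)=0$. Since $K*v$ inherits subcritical integrability in some $L^{q^*}_t L^{p^*}_x$ from the regularity of $v$ in Definition \ref{def:pde} via Young, the same Duhamel--contraction argument applies verbatim and yields $h\equiv 0$. The main technical obstacle is the contraction step: one must identify a single Banach space $\mathbb{X}_T = L^q_t L^p_x$ in which (i) the Duhamel gradient factor $(t-s)^{-\alpha}$ is time-integrable with $\alpha<1$, (ii) Young's inequality closes the nonlinearity using $K\in L^{q_2}_t L^{p_2}_x$ together with the integrability of $v^1,v^2$ from Definition \ref{def:pde}, and (iii) a strictly positive power of $T$ emerges to enable contraction. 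The strict inequality $d/p_2+2/q_2<1$ is precisely the margin that permits the simultaneous fulfillment of (i)--(iii), while $d/p_2+2/q_2+1/r\le 1$ is what makes the nonlinear term integrable to begin with, rendering the equation meaningful as a member of the solution class in Definition \ref{def:pde}.
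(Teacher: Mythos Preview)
Your uniqueness argument is in the same spirit as the paper's: both use the mild formulation, the heat-kernel gradient bound, Young's convolution inequality, and the strict gap $d/p_2+2/q_2<1$ to close the estimate. The paper carries out the exponent bookkeeping explicitly, fixing a small parameter $\kappa$ and working in $L^{2/(d\kappa)}([0,T];L^{1/(1-\kappa)}(\mathbb{R}^d))$, and concludes via a Gronwall inequality for $t\mapsto\|v^1_t-v^2_t\|_{L^{1/(1-\kappa)}}^{2/(d\kappa)}$ rather than a small-time contraction; your outline would need similar care to verify that a single $(p,q)$ satisfying all three of your conditions (i)--(iii) actually exists within the regularity window of Definition~\ref{def:pde}. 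Your reading of the two hypotheses---$d/p_2+2/q_2+1/r\le 1$ to make the nonlinear term live in the solution class, and the strict inequality to generate the spare power of $T$---is exactly right. One slip: in your displayed contraction estimate the factor should be a norm of $K$ times a norm of $v^i$ in the Definition~\ref{def:pde} space, not $\|v^i\|_{L^{q_2}_tL^{p_2}_x}$.

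Your existence argument, however, has a genuine gap. In the particle system \eqref{eq:IPS} the weights $w_j^N$ are \emph{deterministic} constants fixed in advance, whereas you propose $w_i^N=(v_0/\rho)(X_i(0))$, which are random and depend on the initial positions; Theorem~\ref{thm:mfpde} and the entropy estimates of Proposition~\ref{prop} are not stated for this situation. Even setting that aside, condition $(\mathbb{W}_r)$ would require $\frac{1}{N}\sum_i |v_0/\rho|^r(X_i(0))=O(1)$, i.e.\ $\int |v_0|^r\rho^{1-r}\,\mathrm{d}x<\infty$, which is in general false for an arbitrary $v_0\in L^1$ and a fixed $\rho$; the vague ``truncate at a slowly growing threshold'' does not obviously preserve the almost-sure weak convergence of the weighted empirical measures to $v_0,g_0$. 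The paper in fact sidesteps this: its proof of Theorem~\ref{thm:zzz} treats only uniqueness, with existence coming from the compactness construction behind Theorem~\ref{thm:mfpde} for the particular $(v_0,g_0)$ that arise there.
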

\begin{proof}
	The proof consists of two parts: the uniqueness of the solution $v$ to the first equation in \eqref{eqt:coupled pde} and the uniqueness of the solution $g$ to the second equation \eqref{eqt:coupled pde} in the  sense  of Definition \ref{def:pde}.   Observe that $g$ solves a linear equation depending on $v$, then it is  natural to study the equation of  $v$ first.

	\textbf{Uniqueness of $v$:}
	
	For general $L^{p_2}_{q_2}$-type kernel,
 the proof is through the mild formulation of \eqref{eqt:coupled pde}. We consider the equation for $v$,
 \begin{align*}
 	v_t= \Gamma_{t}*v_0-\int_0^t \nabla \Gamma_{t-s}*\(K*v_sv_s \) \mathd s .
 \end{align*}

 Let $\kappa>0$ be a positive number satisfying
\begin{align}
	 0<\kappa< \min\left\lbrace \frac{1}{p_2},  \frac{2}{d}(1-\frac{1}{r}), \frac{1}{2d}\(\frac{1}{r}+1- [ \frac{d}{p_2}+ \frac{2}{q_2}+\frac{1}{r}] \)\right\rbrace<\frac{1}{d}. \label{rela:0}
\end{align}
The constraint \eqref{rela:2} implies that  it happens either  $ \frac{d}{p_2}+\frac{2}{q_2}+ \frac{1}{r}< 1$ or $r<\infty$, which together with $r>1$ ensures the existence of $\kappa$.

Suppose that there exist two solutions $v^1$ and $v^2$ starting from the same initial data.  Since  $\kappa< \frac{2}{d}(1-\frac{1}{r})$, we deduce from   Definition \ref{def:pde}  that $v^1$ and $v^2$ belong to $L^{\frac{2}{d\kappa}}  ([0,T, L^{\frac{1}{1-\kappa}}(\mathbb{R}^d))$.
 Computing the $L^{\frac{1}{1-\kappa}}$-norm of $v^1-v^2$ then  leads to
\begin{align}
	\|v_t^1-v^2_t\|_{L^{\frac{1}{1-\kappa}}}\leq &\int_0^t  \left\| \nabla \Gamma_{t-s} * \( K*v_s^1 v_s^1-K*v_s^2v_s^2\)\right\|_{L^{\frac{1}{1-\kappa}}}\mathd s \nonumber
	\\\leq &\int_0^t  \left\| \nabla \Gamma_{t-s} * \( K*v_s^1 \[v_s^1-v_s^2\]\)\right\| _{L^{\frac{1}{1-\kappa}}}\mathd s \nonumber
	\\&+ \int_0^t  \left\| \nabla \Gamma_{t-s} * \( K* \[v_s^1-v_s^2\]v_s^2\)\right\| _{L^{\frac{1}{1-\kappa}}}\mathd s,\label{ine:unique111}
\end{align}
where $\Gamma_{t}$ is the the heat kernel of $\Delta$.
Using Young's convolution inequality,   we have
\begin{align*}
&\int_0^t  \left\| \nabla \Gamma_{t-s} * \( K*v_s^1 \[v_s^1-v_s^2\]\)\right\| _{L^{\frac{1}{1-\kappa}}}\mathd s
\\\leq & \int_0^t \left\|  \nabla \Gamma_{t-s}\right\|_{L^{\frac{1}{1-\kappa^2}}} \left\|   K*v_s^1 \[v_s^1-v_s^2\]\right\|_{L^{\frac{1}{1-\kappa(1-\kappa)}}} \mathd s .
\end{align*}
Furthermore,  by  the property of heat kernel $\|\nabla \Gamma_{t}\|_{L^q}\lesssim t^{\frac{d}{2q}-\frac{d+1}{2}}$ for $q\geq 1$, taking $q= \frac{1}{1-\kappa^2}$ gives
\begin{align}
&\int_0^t  \left\| \nabla \Gamma_{t-s} * \( K*v_s^1 \[v_s^1-v_s^2\]\)\right\| _{L^{\frac{1}{1-\kappa}}}\mathd s \nonumber
\\\lesssim &\int_0^t (t-s)^{\frac{d}{2}(1-\kappa^2)- \frac{d+1}{2}}\left\|  K* v_s^1\right\|_{L^{\frac{1}{\kappa^2}}}\|v^1_s-v^2_s\|_{L^{\frac{1}{1-\kappa}}} \mathd s \quad\quad \quad \quad \text{   }\text{  } \text{ }\nonumber
 \\\lesssim&  \int_0^t (t-s)^{- \frac{1+d\kappa^2}{2}}  \left\|K \right\|_{L^{p_2}} \left\|   v_s^1\right\|_{L^{p_3}}\|v^1_s-v^2_s\|_{L^{\frac{1}{1-\kappa}}} \mathd s ,\quad \quad \quad  \frac{1}{p_3}=1+\kappa^2-\frac{1}{p_2}.\label{ppqq:1}
\end{align}
In the Definition \ref{def:pde}, the maximal spatial integrability we obtained for $v_t^1$ and $v_t^2$ is $L^p(\mathbb{R}^d)$ with $p=\frac{d}{d-2+\frac{2}{r}}<\infty$; this excludes the case $d=2$ and $r=\infty$,    where  the range of $p$ is $ [1,\infty)$.
 Now we check that $p_3\in (1,p)$.  On  one hand,  $p_3>1$ follows by
\begin{align*}
	1+\kappa^2-\frac{1}{p_2}<1+\kappa-\frac{1}{p_2}<1,
\end{align*}
where we used  \eqref{rela:0}.  On the other hand, we find the upper bound $p_3<p$ by noticing
\begin{align*}
1+\kappa^2-\frac{1}{p_2}\geq 1+\kappa^2-\frac{1}{d}(1-\frac{1}{r})=  1- \frac{2}{d}(1-\frac{1}{r})+ \kappa^2 +\frac{1}{d}(1-\frac{1}{r})>\frac{1}{p},
\end{align*}
where the first inequality follows by \eqref{rela:2}, while the last inequality is given by  $\frac{1}{p}=1- \frac{2}{d}(1-\frac{1}{r})$.

By Corollary \ref{coro:regu}, we can take $q_3 >1$  such that  $\frac{1}{q_3}=\frac{d}{2}(1-\frac{1}{p_3})$.
Let $m\geq1$  such that $\frac{1}{q_3} +\frac{1}{q_2}+\frac{1}{m}+\frac{d\kappa}{2}=1$, we  find
\begin{align*}
	\frac{1}{m}=& 1-\frac{1}{q_3}-\frac{1}{q_2}-\frac{d\kappa}{2}
	\\= &1+\frac{d}{2}(\frac{1}{p_3}-1)-\frac{1}{2}\(\frac{d}{p_2}+\frac{2}{q_2}+  \frac{1}{r}-1 \)+ \frac{d}{2p_2}+\frac{1}{2r}-\frac{1}{2}-\frac{d\kappa}{2}
	\\= & \frac{1}{2}+ \frac{d}{2}\( \frac{1}{p_3}+\frac{1}{p_2}-1\)+\frac{1}{2r}-\frac{1}{2}\( \frac{d}{p_2}+ \frac{2}{q_2}+ \frac{1}{r}-1 \)-\frac{d\kappa}{2}
	\\= & \frac{1}{2}+\frac{d\kappa^2}{2}+\frac{1}{2r}-\frac{1}{2}\(\frac{d}{p_2}+\frac{2}{q_2}+  \frac{1}{r}-1 \)-\frac{d\kappa}{2},
\end{align*}
where we used the condition on $(\kappa,p_3,p_2)$ in  \eqref{ppqq:1} to find the last equality.  Recall the condition on $\kappa$, we have
\begin{align}
	\frac{1}{m}>   \frac{1+d\kappa^2}{2} \label{rela:3}.
\end{align}
Applying  H\"older's inequality to \eqref{ppqq:1}, we find
\begin{align}
	&\int_0^t  \left\| \nabla \Gamma_{t-s} * \( K*v_s^1 \[v_s^1-v_s^2\]\)\right\| _{L^{\frac{1}{1-\kappa}}}\mathd s \nonumber
	\\\lesssim  &  \|K\|_{L^{p_2}_{q_2}}\|v^1\|_{L^{p_3}_{q_3}}\|v^1-v^2\|_{L^{\frac{1}{1-\kappa}}_{\frac{2}{d\kappa}}}\left(   \int_0^t  (t-s)^{- \frac{1+d\kappa^2}{2}m }\mathd s  \right)^{\frac{1}{m}}.\nonumber
\end{align}
By \eqref{rela:3} and  the regularity estimate in the Definition \ref{def:pde}, we  conclude that
\begin{align}
	\int_0^t  \left\| \nabla \Gamma_{t-s} * \( K*v_s^1 \[v_s^1-v_s^2\]\)\right\| _{L^{\frac{1}{1-\kappa}}}\mathd s \lesssim \|v^1-v^2\|_{L^{\frac{1}{1-\kappa}}_{\frac{2}{d\kappa}}}. \label{ine:unique1}
\end{align}

Similarly, we have
\begin{align}
&	\int_0^t  \left\| \nabla \Gamma_{t-s} * \( K* \[v_s^1-v_s^2\]v_s^2\)\right\| _{L^{\frac{1}{1-\kappa}}}\mathd s\nonumber
\\\lesssim&  \int_0^t (t-s)^{- \frac{1+d\kappa^2}{2}} \left\| 	K* \[v_s^1-v_s^2\]v_s^2   \right\|_{L^{\frac{1}{1-\kappa(1-\kappa)}}} \mathd s  \nonumber
	\\\lesssim& \int_0^t (t-s)^{- \frac{1+d\kappa^2}{2}} \left\| 	K\right\|_{L^{p_2}}          \left\|   v_s^2\right\|_{L^{p_3}}\|v^1_s-v^2_s\|_{L^{\frac{1}{1-\kappa}}} \mathd s \nonumber
	\\\lesssim& \|v^1-v^2\|_{L^{\frac{1}{1-\kappa}}_{\frac{2}{d\kappa}}}. \label{ine:unique11}
\end{align}
Combining \eqref{ine:unique111}-\eqref{ine:unique11}, we arrive at
\begin{align}
		\|v_t^1-v^2_t\|_{L^{\frac{1}{1-\kappa}}}^{\frac{2}{d\kappa}}\lesssim \int_0^t 		\|v_s^1-v^2_s\|_{L^{\frac{1}{1-\kappa}}}^{\frac{2}{d\kappa}}\mathd s .
\end{align}
Therefore we obtain the  $v^1=v^2$ in $L^{\frac{1}{1-\kappa}}(\mathbb{R}^d)$ for all $t\in (0,T]$ by applying Gronwall's inequality.  We then conclude the uniqueness.
%Since $v^1,v^2\in C([0,T],\mathcal{M}(\mathbb{R}^d))$, then $v_t^1=v_t^2$ in $\mathcal{M}(\mathbb{R}^d)$ for all $t\in [0,T]$.

\textbf{Uniqueness of $g$:}

Now we consider  the mild formulation of $g$,
\begin{align*}
	g_t= \Gamma_{t}*g_0-\int_0^t \nabla \Gamma_{t-s}*\(K*{v}_sg_s \) \mathd s .
\end{align*}
Observe that this is a linearized version of the equation for $g$.  Similarly,
suppose that there exist  two solutions $g^1$ and $g^2$, then studying  the $L^{{\frac{1}{1-\kappa}}}$-norm of $g^1-g^2$  leads to
\begin{align*}
	\|g_t^1-g^2_t\|_{L^{\frac{1}{1-\kappa}}}\leq \int_0^t  \left\| \nabla \Gamma_{t-s} * \( K*v_s \[ g_s^1-g_s^2\]\)\right\| _{L^{\frac{1}{1-\kappa}}}\mathd s .
\end{align*}
Similar to \eqref{ine:unique1}, we find
\begin{align*}
		\|g_t^1-g^2_t\|_{L^{\frac{1}{1-\kappa}}}\lesssim \left( \int_0^t 	\|g_s^1-g^2_s\|_{L^{\frac{1}{1-\kappa}}}^{\frac{2}{d\kappa}} \mathd s \right)^{\frac{d\kappa}{2 }} ,
\end{align*}the proof is thus completed by applying Gronwall's inequality.
\end{proof}

When $K$ is the Biot-Savart law on dimension two, Theorem \ref{thm:uniquepde} is indeed the uniqueness of solutions to  the passive scalar advented by the  2D Navier-Stokes equation.
\begin{theorem}\label{thm:xxx}
	There exists a unique solution $(v,g)\in C([0,T],\mathcal{M}(\mathbb{R}^2))^2$ to \eqref{eqt:coupled pde} in the sense of Definition \ref{def:pde}  if  the kernel  $K$ is the 2D Biot-Savart law \eqref{eq:K} and $r=3$.
\end{theorem}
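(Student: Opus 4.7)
The plan is to decouple the system \eqref{eqt:coupled pde}: the second equation governing $v$ is \emph{independent} of $g$ and is precisely the 2D Navier--Stokes equation in vorticity form, while the first equation is \emph{linear} in $g$ once $v$ is fixed. Accordingly, I would first prove uniqueness of $v$ on its own, and then deduce uniqueness of $g$ as a linear problem driven by the now-determined velocity field $u:=K*v$.

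\textbf{Step 1 (Uniqueness of $v$).} For $r\in[3,\infty]$, the regularity built into Definition \ref{def:pde}, together with $v\in L^\infty([0,T];L^1(\mathbb{R}^2))$, places $v$ in the uniqueness class for the 2D Navier--Stokes vorticity equation with $L^1$ initial data. Indeed, the constraint $\tfrac{2}{p}+\tfrac{2(r-1)}{r}\geq 2$ forces $p\leq r$, so the exponent $p$ can be taken $\geq 3$, which together with $\tfrac{2}{p}+\tfrac{2}{q}\geq 2$ is sufficient to fall inside the uniqueness framework of Ben-Artzi \cite{ben1994global} and Brezis \cite{brezis1994remarks}. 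This is exactly the framework already invoked in \cite{fournier2014propagation}, so the result can be imported directly.

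\textbf{Step 2 (Uniqueness of $g$).} With $v$ now uniquely determined, set $u:=K*v$; since $K=\nabla^{\perp}G$ we have $\div u=0$, and the first equation in \eqref{eqt:coupled pde} reduces to the linear advection--diffusion equation $\partial_t g=\Delta g-u\cdot\nabla g$. For two solutions $g^1,g^2$ with the same initial data the difference $h:=g^1-g^2$ satisfies the mild formulation
\begin{equation*}
h_t=-\int_0^t\nabla\Gamma_{t-s}*(u_s h_s)\,\dif s,
\end{equation*}
where $\Gamma$ is the 2D heat kernel. I would then essentially repeat the Gr\"onwall/mild argument used in the proof of Theorem \ref{thm:zzz}: applying Young's convolution inequality, the standard bound $\|\nabla\Gamma_\tau\|_{L^a}\lesssim\tau^{\frac{1}{a}-\frac{3}{2}}$, and Hardy--Littlewood--Sobolev (which for the Biot--Savart kernel yields $\|u\|_{L^{\beta}}\lesssim\|v\|_{L^{\beta'}}$ provided $\tfrac{1}{\beta'}-\tfrac{1}{\beta}=\tfrac{1}{2}$ with $1<\beta'<2$), I expect to arrive at an inequality of the shape
\begin{equation*}
\|h_t\|_{L^{1/(1-\kappa)}}\lesssim\int_0^t(t-s)^{-\alpha}\|h_s\|_{L^{1/(1-\kappa)}}\,\dif s
\end{equation*}
for some $\kappa>0$ and $\alpha<1$, after which the singular Gr\"onwall lemma gives $h\equiv 0$.

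\textbf{Main obstacle.} The genuinely hard step is Step 1: the 2D Biot--Savart kernel fails to belong to any $L^{p_2}_{q_2}$ satisfying the subcritical integrability $\tfrac{d}{p_2}+\tfrac{2}{q_2}<1$ demanded by Theorem \ref{thm:zzz}, so the self-contained mild-formulation argument of that theorem cannot be reused here. One must instead exploit the finer structure of 2D Navier--Stokes (in particular the antisymmetry $\div(K*v)=0$, the $L^1$ mass conservation, and the instantaneous parabolic regularization of $L^1$ vorticity) and appeal to the classical Ben-Artzi/Brezis theory. Step 2, by contrast, is a routine linear argument; the only care required is to verify that the regularity provided by Definition \ref{def:pde} is enough to make the mild representation and each subsequent norm estimate rigorous, which should follow exactly as in Theorem \ref{thm:zzz}.
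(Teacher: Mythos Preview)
Your Step 1 is correct and matches the paper's approach: uniqueness of $v$ is imported from \cite{fournier2014propagation} via the Ben-Artzi/Brezis theory, and the restriction $r\geq 3$ is exactly what is needed to ensure the regularity in Definition \ref{def:pde} (namely $v\in L^{p'}_tL^p_x$ for $p\leq r$, together with the gradient bound) is sufficient to track that proof.

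The gap is in Step 2. You assert that the linear passive scalar equation is ``routine'' and that the mild-formulation argument ``should follow exactly as in Theorem \ref{thm:zzz}'', but this is not the case. The Biot--Savart kernel is \emph{critical}: if you carry out the H\"older/HLS bookkeeping you sketch, with $\|u_s\|_{L^\beta}\lesssim\|v_s\|_{L^{\beta'}}$ and $\alpha=\tfrac12+\tfrac1\beta$, the only time integrability of $u$ available from Definition \ref{def:pde} is $u\in L^{\gamma}_tL^\beta_x$ with $\tfrac1\gamma=\tfrac12-\tfrac1\beta=1-\alpha$. Thus $\tfrac1\gamma+\alpha=1$ exactly, which is the borderline at which the singular Gr\"onwall inequality does \emph{not} close; you cannot absorb the resulting constant. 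Equivalently, the entire argument of Theorem \ref{thm:zzz} relies on the strict inequality $\tfrac{d}{p_2}+\tfrac{2}{q_2}<1$, which you yourself note fails for Biot--Savart, and this failure affects the linear $g$-equation just as much as the nonlinear $v$-equation.

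The paper resolves this by exploiting an additional consequence of the Ben-Artzi/Brezis theory that goes beyond Definition \ref{def:pde}: Brezis's remark \cite{brezis1994remarks} gives the smoothing $\lim_{t\to0}t\|v_t\|_{L^\infty}=0$, from which a short computation (splitting the convolution at scale $\sqrt{t/c(t)}$) yields $\lim_{t\to0}t^{1/2}\|K*v_t\|_{L^\infty}=0$. Working in $L^1$ for $h=g^1-g^2$ then gives
\[
\|h_t\|_{L^1}\lesssim c_0(t)\int_0^t (t-s)^{-1/2}s^{-1/2}\|h_s\|_{L^1}\,\dif s,
\]
with $c_0(t)=\sup_{s\leq t}s^{1/2}\|K*v_s\|_{L^\infty}\to0$. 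The kernel $(t-s)^{-1/2}s^{-1/2}$ is still critical (its integral is a fixed Beta constant), but the \emph{smallness} of $c_0(t)$ on a short interval lets a Volterra-type Gr\"onwall close, after which one iterates to cover $[0,T]$. In short, Step 2 also requires an input from the nonlinear Navier--Stokes theory, not merely the a priori regularity of Definition \ref{def:pde}.
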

\begin{proof}
	When $K$ is the Biot-Savart law, $v$ solves the vorticity formulation of 2D Navier-Stokes equation. For this case, the uniqueness of solutions with  the regularity properties    \cite[(2.6)]{fournier2014propagation} (i.e.  Corollary \ref{coro:regu} with $r=\infty$)  is already obtained in \cite{fournier2014propagation}, using the well-posedness result in the space $C([0,T),L^1(\mathbb{R}^2))\cap C((0,T), L^{\infty}(\mathbb{R}^d))$ from \cite{ben1994global} and the remark  \cite{brezis1994remarks}.  The strategy in \cite{fournier2014propagation} is to  improve the regularity of solutions by the 	DiPerna-Lions' renormalized solution and  the maximal regularity of the heat equation  so that the solution $v$ meets the conditions in \cite{ben1994global} and   \cite{brezis1994remarks}.
	
	The regularity result  Corollary \ref{coro:regu} is  in fact a generalization of \cite[(2.6)]{fournier2014propagation}. In particular,  Corollary \ref{coro:regu}  implies
	\begin{align*}
		 v\in L^{\infty}([0,T], L^1(\mathbb{R}^2))\cap L^{\frac{p}{p-1}}([0,T], L^p(\mathbb{R}^2)),\quad p\leq r \text{ and }1<p<\infty;
	\end{align*}
and
\begin{align*}
	\nabla v\in L^{\frac{2q}{3q-2}}([0,T], L^q(\mathbb{R}^2),\quad q\leq \frac{2r}{r+2}, \text{ }q<2, \text{ and }1\leq q<2.
\end{align*}
Although here we have the extra restrictions $p\leq r$ and $q\leq 2r/(r+2)$, by letting 
	 $r\geq3$, one can  track  the proof \cite[Theorem 2.5]{fournier2014propagation} to get the uniqueness of the solutions to the  vorticity form of the 2D Navier-Stokes equation.

 Therefore, $v$ is the unique solution  in the sense of Definition \ref{def:pde}. Furthermore, the remark \cite{brezis1994remarks} by Brezis shows that  for $L^1$-valued initial data,
\begin{align}
\lim_{t\rightarrow 0}	t\|v_t\|_{L^{\infty}}=0 .\label{ieq:timereg}
\end{align}
We then use $|K(y)|\lesssim |y|^{-1}$ to obtain
\begin{align*}
|K*v_t(x)|\leq& \left| \int_{|y|\leq  \sqrt{\frac{t}{c(t)}}}K(y)v_t(x-y)\mathd y\right| + \left| \int_{|y|> \sqrt{\frac{t}{c(t)}}}K(y)v_t(x-y)\mathd y\right|
\\\leq & \frac{1}{2\pi}\|v_t\|_{L^{\infty}} \int_{|y|\leq  \sqrt{\frac{t}{c(t)}}}\frac{1}{|y|}\mathd y+ \frac{1}{2\pi}\sqrt{\frac{c(t)}{t}} \|v_t\|_{L^{1}}
\\\lesssim& \sqrt{\frac{t}{c(t)}}\|v_t\|_{L^{\infty}} +\sqrt{\frac{c(t)}{t}} ,
\end{align*}
for  $c(t)>0$ and all $x\in \mathbb{R}^2$. Letting  $c(t)= t\|v_t\|_{L^{\infty}}$ and applying \eqref{ieq:timereg}, we arrive at
\begin{align}
	\lim_{t\rightarrow 0} t^{\frac{1}{2}} \|K*v_t\|_{L^{\infty}}\lesssim	\lim_{t\rightarrow 0}\sqrt{ t\|v_t\|_{L^{\infty}}}=0. \label{timereg}
\end{align}

Suppose there exist two solutions  $g^1$ and $g^2$ to the second equation in \eqref{eqt:coupled pde}. By the mild formulations of solutions, we have
\begin{align*}
	\|g_t^1-g^2_t\|_{L^1}\leq&  \int_0^t  \left\| \nabla \Gamma_{t-s} * \( K*v_s \[ g_s^1-g_s^2\]\)\right\| _{L^1}\mathd s
	\\\lesssim&\int_0^t  (t-s)^{-\frac{1}{2}}\|   K*v_s\|_{L^{\infty}}  \left\| g_s^1-g_s^2\right\| _{L^1}\mathd s .
\end{align*}
Using the time regularity result \eqref{timereg}, we find
\begin{align*}
		\|g_t^1-g^2_t\|_{L^1}\lesssim c_0(t) \int_0^t  (t-s)^{-\frac{1}{2}}s^{-\frac{1}{2}}  \left\| g_s^1-g_s^2\right\| _{L^1}\mathd s ,
\end{align*}
where $c_0(t)=\sup_{s\in [0,t]}s^{\frac{1}{2}}\|K*v_s\|_{L^{\infty}}\rightarrow 0$ as $t\rightarrow0$.
 We then deduce $g_t^1=g_t^2$  up to a  short time $t_0>0$ by  Gronwall's inequality of Volterra type, see for instance \cite[Example 2.4]{zhang2010stochastic}. Applying this argument for finite times, we conclude the the uniqueness  for all $t\in [0,T]$.
\end{proof}

\bibliographystyle{alpha}
\bibliography{ldpvortex}

\newcommand{\etalchar}[1]{$^{#1}$}
\begin{thebibliography}{RJBVE19}

\bibitem[ACM19]{alberti2019exponential}
Giovanni Alberti, Gianluca Crippa, and Anna Mazzucato.
\newblock Exponential self-similar mixing by incompressible flows.
\newblock {\em Journal of the American Mathematical Society}, 32(2):445--490,
  2019.

\bibitem[ADP24]{ayi2024mean}
Nathalie Ayi, Nastassia~Pouradier Duteil, and David Poyato.
\newblock Mean-field limit of non-exchangeable multi-agent systems over
  hypergraphs with unbounded rank.
\newblock {\em arXiv preprint arXiv:2406.04691}, 2024.

\bibitem[AGS08]{ambrosio2008gradient}
Luigi Ambrosio, Nicola Gigli, and Giuseppe Savar{\'e}.
\newblock {\em Gradient flows: in metric spaces and in the space of probability
  measures}.
\newblock Springer Science \& Business Media, 2008.

\bibitem[BA94]{ben1994global}
Matania Ben-Artzi.
\newblock Global solutions of two-dimensional navier-stokes and euler
  equations.
\newblock {\em Archive for rational mechanics and analysis}, 128(4):329--358,
  1994.

\bibitem[BBPS22a]{bedrossian2022almost}
Jacob Bedrossian, Alex Blumenthal, and Samuel Punshon-Smith.
\newblock Almost-sure exponential mixing of passive scalars by the stochastic
  navier--stokes equations.
\newblock {\em The Annals of Probability}, 50(1):241--303, 2022.

\bibitem[BBPS22b]{bedrossian2022batchelor}
Jacob Bedrossian, Alex Blumenthal, and Samuel Punshon-Smith.
\newblock The batchelor spectrum of passive scalar turbulence in stochastic
  fluid mechanics at fixed reynolds number.
\newblock {\em Communications on Pure and Applied Mathematics},
  75(6):1237--1291, 2022.

\bibitem[BCCZ19]{borgs2019L}
Christian Borgs, Jennifer Chayes, Henry Cohn, and Yufei Zhao.
\newblock An $l^p$ theory of sparse graph convergence i: Limits, sparse random
  graph models, and power law distributions.
\newblock {\em Transactions of the American Mathematical Society},
  372(5):3019--3062, 2019.

\bibitem[BCN24]{bet2020weakly}
Gianmarco Bet, Fabio Coppini, and Francesca~Romana Nardi.
\newblock Weakly interacting oscillators on dense random graphs.
\newblock {\em Journal of Applied Probability}, 61(1):255--278, 2024.

\bibitem[BCW23]{bayraktar2020graphon}
Erhan Bayraktar, Suman Chakraborty, and Ruoyu Wu.
\newblock Graphon mean field systems.
\newblock {\em The Annals of Applied Probability}, 33(5):3587--3619, 2023.

\bibitem[BFH18]{hofmanova2018spde}
Dominic Breit, Eduard Feireisl, and Martina Hofmanov{\'a}.
\newblock {\em Stochastically forced compressible fluid flows}, volume~3.
\newblock Walter de Gruyter GmbH \& Co KG, 2018.

\bibitem[BFT15]{bossy2015clarification}
Mireille Bossy, Olivier Faugeras, and Denis Talay.
\newblock Clarification and complement to “mean-field description and
  propagation of chaos in networks of hodgkin--huxley and fitzhugh--nagumo
  neurons”.
\newblock {\em The Journal of Mathematical Neuroscience (JMN)}, 5(1):1--23,
  2015.

\bibitem[BH77]{braun1977vlasov}
Werner Braun and K~Hepp.
\newblock The vlasov dynamics and its fluctuations in the 1/n limit of
  interacting classical particles.
\newblock {\em Communications in mathematical physics}, 56(2):101--113, 1977.

\bibitem[BJW23]{bresch2020mean}
Didier Bresch, Pierre-Emmanuel Jabin, and Zhenfu Wang.
\newblock Mean field limit and quantitative estimates with singular attractive
  kernels.
\newblock {\em Duke Mathematical Journal}, 172(13):2591--2641, 2023.

\bibitem[Bog07]{bogachev2007measure2}
Vladimir~I Bogachev.
\newblock {\em Measure theory}, volume~2.
\newblock Springer Science \& Business Media, 2007.

\bibitem[Bre94]{brezis1994remarks}
Ha{\"\i}m Brezis.
\newblock Remarks on the preceding paper by m. ben-artzi" global solutions of
  two-dimensional navier-stokes and euler equations".
\newblock {\em Archive for Rational Mechanics and Analysis}, 128(4):359--360,
  1994.

\bibitem[Bre10]{brezis2010functional}
Haim Brezis.
\newblock {\em Functional analysis, Sobolev spaces and partial differential
  equations}.
\newblock Springer Science \& Business Media, 2010.

\bibitem[Car91]{carlen1991superadditivity}
Eric~A Carlen.
\newblock Superadditivity of fisher's information and logarithmic sobolev
  inequalities.
\newblock {\em Journal of Functional Analysis}, 101(1):194--211, 1991.

\bibitem[CCH14]{carrillo2014derivation}
Jos{\'e}~Antonio Carrillo, Young-Pil Choi, and Maxime Hauray.
\newblock The derivation of swarming models: mean-field limit and wasserstein
  distances.
\newblock In {\em Collective dynamics from bacteria to crowds}, pages 1--46.
  Springer, 2014.

\bibitem[CG25]{chen2022sample}
Chenyang Chen and Hao Ge.
\newblock Sample-path large deviation principle for a 2-d stochastic
  interacting vortex dynamics with singular kernel.
\newblock {\em The Annals of Applied Probability}, 35(1):309--359, 2025.

\bibitem[CLP23]{coppini2023central}
Fabio Coppini, Eric Lucon, and Christophe Poquet.
\newblock Central limit theorems for global and local empirical measures of
  diffusions on erd{\H{o}}s-r{\'e}nyi graphs.
\newblock {\em Electronic Journal of Probability}, 28:1--63, 2023.

\bibitem[Cre19]{crevat2019mean}
Joachim Crevat.
\newblock Mean-field limit of a spatially-extended fitzhugh-nagumo neural
  network.
\newblock {\em Kinetic \& Related Models}, 12(6):1329, 2019.

\bibitem[DDF88]{d1988merging}
Anthony D'Aristotile, Persi Diaconis, and David Freedman.
\newblock On merging of probabilities.
\newblock {\em The Indian Journal of Statistics, Series A}, pages 363--380,
  1988.

\bibitem[DE11]{dupuis2011ldp}
Paul Dupuis and Richard~S Ellis.
\newblock {\em A weak convergence approach to the theory of large deviations},
  volume 902.
\newblock John Wiley \& Sons, 2011.

\bibitem[DF37]{de1937prevision}
Bruno De~Finetti.
\newblock La pr{\'e}vision: ses lois logiques, ses sources subjectives.
\newblock In {\em Annales de l'institut Henri Poincar{\'e}}, volume~7, pages
  1--68, 1937.

\bibitem[DGL16]{delattre2016note}
Sylvain Delattre, Giambattista Giacomin, and Eric Lucon.
\newblock A note on dynamical models on random graphs and fokker--planck
  equations.
\newblock {\em Journal of Statistical Physics}, 165(4):785--798, 2016.

\bibitem[Dob79]{dobrushin1979vlasov}
Roland~L'vovich Dobrushin.
\newblock Vlasov equations.
\newblock {\em Funktsional'nyi Analiz i ego Prilozheniya}, 13(2):48--58, 1979.

\bibitem[Dud18]{dudley2018real}
Richard~M Dudley.
\newblock {\em Real analysis and probability}.
\newblock CRC Press, 2018.

\bibitem[Due16]{duerinckx2016mean}
Mitia Duerinckx.
\newblock Mean-field limits for some riesz interaction gradient flows.
\newblock {\em SIAM Journal on Mathematical Analysis}, 48(3):2269--2300, 2016.

\bibitem[FGP11]{flandoli2011full}
Franco Flandoli, Massimiliano Gubinelli, and Enrico Priola.
\newblock Full well-posedness of point vortex dynamics corresponding to
  stochastic 2d euler equations.
\newblock {\em Stochastic Processes and their Applications}, 121(7):1445--1463,
  2011.

\bibitem[FGV01]{falkovich2001particles}
Gregory Falkovich, K~Gawedzki, and Massimo Vergassola.
\newblock Particles and fields in fluid turbulence.
\newblock {\em Reviews of modern Physics}, 73(4):913, 2001.

\bibitem[FH16]{fournier2016propagationlandau}
Nicolas Fournier and Maxime Hauray.
\newblock Propagation of chaos for the landau equation with moderately soft
  potentials.
\newblock {\em The Annals of Probability}, 44(6):3581--3660, 2016.

\bibitem[FHM14]{fournier2014propagation}
Nicolas Fournier, Maxime Hauray, and St{\'e}phane Mischler.
\newblock Propagation of chaos for the 2d viscous vortex model.
\newblock {\em Journal of the European Mathematical Society}, 16(7):1423--1466,
  2014.

\bibitem[FJ90]{friedkin1990social}
Noah~E Friedkin and Eugene~C Johnsen.
\newblock Social influence and opinions.
\newblock {\em Journal of Mathematical Sociology}, 15(3-4):193--206, 1990.

\bibitem[FM07]{fontbona2007paths}
Joaquin Fontbona and Miguel Martinez.
\newblock Paths clustering and an existence result for stochastic vortex
  systems.
\newblock {\em Journal of Statistical Physics}, 128(3):699--719, 2007.

\bibitem[FW24]{feng2024quantitative}
Xuanrui Feng and Zhenfu Wang.
\newblock Quantitative propagation of chaos for 2d viscous vortex model with
  general circulations on the whole space.
\newblock {\em arXiv preprint arXiv:2411.14266}, 2024.

\bibitem[GG05]{gallagher2005uniqueness}
Isabelle Gallagher and Thierry Gallay.
\newblock Uniqueness for the two-dimensional navier--stokes equation with a
  measure as initial vorticity.
\newblock {\em Mathematische Annalen}, 332:287--327, 2005.

\bibitem[GLBM23]{guillin2022systems}
Arnaud Guillin, Pierre Le~Bris, and Pierre Monmarch{\'e}.
\newblock On systems of particles in singular repulsive interaction in
  dimension one: log and riesz gas.
\newblock {\em Journal de l’{\'E}cole polytechnique—Math{\'e}matiques},
  10:867--916, 2023.

\bibitem[GLBM24]{guillin2024uniform}
Arnaud Guillin, Pierre Le~Bris, and Pierre Monmarch{\'e}.
\newblock Uniform in time propagation of chaos for the 2d vortex model and
  other singular stochastic systems.
\newblock {\em Journal of the European Mathematical Society}, pages 1--28,
  2024.

\bibitem[GRRR70]{garsia1970real}
Adriano~M Garsia, Eugene Rodemich, Howard Rumsey, and M~Rosenblatt.
\newblock A real variable lemma and the continuity of paths of some gaussian
  processes.
\newblock {\em Indiana University Mathematics Journal}, 20(6):565--578, 1970.

\bibitem[HHMT24]{hoeksema2024large}
Jasper Hoeksema, Thomas Holding, Mario Maurelli, and Oliver Tse.
\newblock Large deviations for singularly interacting diffusions.
\newblock In {\em Annales de l'Institut Henri Poincare (B) Probabilites et
  statistiques}, volume~60, pages 492--548. Institut Henri Poincar{\'e}, 2024.

\bibitem[HL13]{hu2013multiparameter}
Yaozhong Hu and Khoa Le.
\newblock A multiparameter garsia--rodemich--rumsey inequality and some
  applications.
\newblock {\em Stochastic Processes and their Applications}, 123(9):3359--3377,
  2013.

\bibitem[HM14]{hauray2014kac}
Maxime Hauray and St{\'e}phane Mischler.
\newblock On kac's chaos and related problems.
\newblock {\em Journal of Functional Analysis}, 266(10):6055--6157, 2014.

\bibitem[HMC{\etalchar{+}}06]{huang2006large}
Minyi Huang, Roland~P Malham{\'e}, Peter~E Caines, et~al.
\newblock Large population stochastic dynamic games: closed-loop mckean-vlasov
  systems and the nash certainty equivalence principle.
\newblock {\em Communications in Information \& Systems}, 6(3):221--252, 2006.

\bibitem[Hoe94]{hoeffding1994probability}
Wassily Hoeffding.
\newblock Probability inequalities for sums of bounded random variables.
\newblock In {\em The collected works of Wassily Hoeffding}, pages 409--426.
  Springer, 1994.

\bibitem[HRZ24]{hao2022strong}
Zimo Hao, Michael R{\"o}ckner, and Xicheng Zhang.
\newblock Strong convergence of propagation of chaos for mckean--vlasov sdes
  with singular interactions.
\newblock {\em SIAM Journal on Mathematical Analysis}, 56(2):2661--2713, 2024.

\bibitem[HS55]{hewitt1955symmetric}
Edwin Hewitt and Leonard~J Savage.
\newblock Symmetric measures on cartesian products.
\newblock {\em Transactions of the American Mathematical Society},
  80(2):470--501, 1955.

\bibitem[Jab14]{jabin2014review}
Pierre-Emmanuel Jabin.
\newblock A review of the mean field limits for vlasov equations.
\newblock {\em Kinetic \& Related Models}, 7(4):661, 2014.

\bibitem[Jak98]{jakubowski1998skorokhod}
Adam Jakubowski.
\newblock The almost sure skorokhod representation for subsequences in
  nonmetric spaces.
\newblock {\em Theory of Probability \& Its Applications}, 42(1):167--174,
  1998.

\bibitem[Jea15]{Jean}
J.H. Jeans.
\newblock On the theory of star-streaming and the structure of the universe.
\newblock {\em Monthly Notices of the Royal Astronomical Society}, 76:70--84,
  1915.

\bibitem[JH15]{jabin2015particles}
Pierre-Emmanuel Jabin and Maxime Hauray.
\newblock Particles approximations of vlasov equations with singular forces:
  Propagation of chaos.
\newblock In {\em Annales scientifiques de l'{\'E}cole Normale Sup{\'e}rieure},
  2015.

\bibitem[JO04]{jabin2004identification}
Pierre-Emmanuel Jabin and Felix Otto.
\newblock Identification of the dilute regime in particle sedimentation.
\newblock {\em Communications in mathematical physics}, 250(2):415--432, 2004.

\bibitem[JPS25]{jabin2021mean}
Pierre-Emmanuel Jabin, David Poyato, and Juan Soler.
\newblock Mean-field limit of non-exchangeable systems.
\newblock {\em Communications on Pure and Applied Mathematics}, 78(4):651--741,
  2025.

\bibitem[JSZ24]{jabin2024dense}
Pierre-Emmanuel Jabin, Valentin Schmutz, and Datong Zhou.
\newblock Dense networks of integrate-and-fire neurons: Spatially-extended
  mean-field limit of the empirical measure.
\newblock {\em arXiv preprint arXiv:2409.06325}, 2024.

\bibitem[JW18]{jabin2018quantitative}
Pierre-Emmanuel Jabin and Zhenfu Wang.
\newblock Quantitative estimates of propagation of chaos for stochastic systems
  with $w^{-1,\infty}$ kernels.
\newblock {\em Inventiones mathematicae}, 214(1):523--591, 2018.

\bibitem[KS84]{koumoullis1984two}
G~Koumoullis and A~Sapounakis.
\newblock Two countability properties of sets of measures.
\newblock {\em Michigan Mathematical Journal}, 31(1):31--47, 1984.

\bibitem[KVM18]{kaliuzhnyi2018mean}
Dmitry Kaliuzhnyi-Verbovetskyi and Georgi~S Medvedev.
\newblock The mean field equation for the kuramoto model on graph sequences
  with non-lipschitz limit.
\newblock {\em SIAM Journal on Mathematical Analysis}, 50(3):2441--2465, 2018.

\bibitem[KX22]{kuehn2022vlasov}
Christian Kuehn and Chuang Xu.
\newblock Vlasov equations on digraph measures.
\newblock {\em Journal of Differential Equations}, 339:261--349, 2022.

\bibitem[Lac23]{lacker2021hierarchies}
Daniel Lacker.
\newblock Hierarchies, entropy, and quantitative propagation of chaos for mean
  field diffusions.
\newblock {\em Probability and mathematical physics}, 4(2):377--432, 2023.

\bibitem[Laz16]{lazarovici2016vlasov}
Dustin Lazarovici.
\newblock The vlasov-poisson dynamics as the mean field limit of extended
  charges.
\newblock {\em Communications in Mathematical Physics}, 347(1):271--289, 2016.

\bibitem[LL07]{lasry2007mean}
Jean-Michel Lasry and Pierre-Louis Lions.
\newblock Mean field games.
\newblock {\em Japanese journal of mathematics}, 2(1):229--260, 2007.

\bibitem[LLY19]{li2019mean}
Lei Li, Jian-Guo Liu, and Pu~Yu.
\newblock On the mean field limit for brownian particles with coulomb
  interaction in 3d.
\newblock {\em Journal of Mathematical Physics}, 60(11):111501, 2019.

\bibitem[Lov12]{lovasz2012large}
L{\'a}szl{\'o} Lov{\'a}sz.
\newblock {\em Large networks and graph limits}, volume~60.
\newblock American Mathematical Soc., 2012.

\bibitem[LRW19]{lacker2019local}
Daniel Lacker, Kavita Ramanan, and Ruoyu Wu.
\newblock Local weak convergence for sparse networks of interacting processes.
\newblock {\em arXiv preprint arXiv:1904.02585}, 2019.

\bibitem[LS14]{luccon2014mean}
Eric Lucon and Wilhelm Stannat.
\newblock Mean field limit for disordered diffusions with singular
  interactions.
\newblock 2014.

\bibitem[Luc20]{lucon2020quenched}
Eric Lucon.
\newblock Quenched asymptotics for interacting diffusions on inhomogeneous
  random graphs.
\newblock {\em Stochastic Processes and their Applications},
  130(11):6783--6842, 2020.

\bibitem[McD98]{mcdiarmid1998concentration}
Colin McDiarmid.
\newblock Concentration.
\newblock In {\em Probabilistic methods for algorithmic discrete mathematics},
  pages 195--248. Springer, 1998.

\bibitem[McK67]{mckeanpropagation}
Henry~P McKean.
\newblock Propagation of chaos for a class of non-linear parabolic equations.
\newblock {\em Stochastic Differential Equations (Lecture Series in
  Differential Equations, Session 7, Catholic Univ., 1967)}, pages 41--57,
  1967.

\bibitem[MMN18]{mei2018mean}
Song Mei, Andrea Montanari, and Phan-Minh Nguyen.
\newblock A mean field view of the landscape of two-layer neural networks.
\newblock {\em Proceedings of the National Academy of Sciences},
  115(33):E7665--E7671, 2018.

\bibitem[MP12]{marchioro2012mathematical}
Carlo Marchioro and Mario Pulvirenti.
\newblock {\em Mathematical theory of incompressible nonviscous fluids},
  volume~96.
\newblock Springer Science \& Business Media, 2012.

\bibitem[NRS22]{nguyen2021mean}
Quoc~Hung Nguyen, Matthew Rosenzweig, and Sylvia Serfaty.
\newblock Mean-field limits of riesz-type singular flows.
\newblock 2022.

\bibitem[OR19]{oliveira2019interacting}
Roberto~I Oliveira and Guilherme~H Reis.
\newblock Interacting diffusions on random graphs with diverging average
  degrees: Hydrodynamics and large deviations.
\newblock {\em Journal of Statistical Physics}, 176(5):1057--1087, 2019.

\bibitem[ORS20]{oliveira2020interacting}
Roberto~I Oliveira, Guilherme~H Reis, and Lucas~M Stolerman.
\newblock Interacting diffusions on sparse graphs: hydrodynamics from local
  weak limits.
\newblock {\em Electronic Journal of Probability}, 25:1--35, 2020.

\bibitem[Osa85]{osada1985stochastic}
Hirofumi Osada.
\newblock A stochastic differential equation arising from the vortex problem.
\newblock {\em Proceedings of the Japan Academy, Series A, Mathematical
  Sciences}, 61(10):333--336, 1985.

\bibitem[Osa86]{osada1986propagation}
Hirofumi Osada.
\newblock Propagation of chaos for the two dimensional navier-stokes equation.
\newblock {\em Proceedings of the Japan Academy, Series A, Mathematical
  Sciences}, 62(1):8--11, 1986.

\bibitem[PR14]{piccoli2014generalized}
Benedetto Piccoli and Francesco Rossi.
\newblock Generalized wasserstein distance and its application to transport
  equations with source.
\newblock {\em Archive for Rational Mechanics and Analysis}, 211(1):335--358,
  2014.

\bibitem[RJBVE19]{rotskoff2019global}
Grant Rotskoff, Samy Jelassi, Joan Bruna, and Eric Vanden-Eijnden.
\newblock Global convergence of neuron birth-death dynamics.
\newblock {\em arXiv preprint arXiv:1902.01843}, 2019.

\bibitem[Ros22]{rosenzweig2020mean}
Matthew Rosenzweig.
\newblock Mean-field convergence of point vortices to the incompressible euler
  equation with vorticity in $l \infty$.
\newblock {\em Archive for Rational Mechanics and Analysis}, 243(3):1361--1431,
  2022.

\bibitem[Sei13]{seis2013maximal}
Christian Seis.
\newblock Maximal mixing by incompressible fluid flows.
\newblock {\em Nonlinearity}, 26(12):3279, 2013.

\bibitem[Ser20]{serfaty2020mean}
Sylvia Serfaty.
\newblock Mean field limit for coulomb-type flows.
\newblock {\em Duke Mathematical Journal}, 169(15):2887--2935, 2020.

\bibitem[SS00]{shraiman2000scalar}
Boris~I Shraiman and Eric~D Siggia.
\newblock Scalar turbulence.
\newblock {\em Nature}, 405(6787):639--646, 2000.

\bibitem[Szn91]{sznitman1991topics}
Alain-Sol Sznitman.
\newblock Topics in propagation of chaos.
\newblock In {\em Ecole d'{\'e}t{\'e} de probabilit{\'e}s de Saint-Flour
  XIX—1989}, pages 165--251. Springer, 1991.

\bibitem[Tak85]{takanobu1985existence}
Satoshi Takanobu.
\newblock On the existence and uniqueness of sde describing an $ n $-particle
  system interacting via a singular potential.
\newblock {\em Proceedings of the Japan Academy, Series A, Mathematical
  Sciences}, 61(9):287--290, 1985.

\bibitem[Tom23]{tomavsevic2023propagation}
Milica Tomasevic.
\newblock Propagation of chaos for stochastic particle systems with singular
  mean-field interaction of l q- l p type.
\newblock {\em Electronic Communications in Probability}, 28:1--13, 2023.

\bibitem[War00]{warhaft2000passive}
Zellman Warhaft.
\newblock Passive scalars in turbulent flows.
\newblock {\em Annual Review of Fluid Mechanics}, 32(1):203--240, 2000.

\bibitem[Wyn21]{wynter2021quantitative}
Dominic Wynter.
\newblock Quantitative propagation of chaos for the mixed-sign viscous vortex
  model on the torus.
\newblock {\em arXiv preprint arXiv:2106.05255}, 2021.

\bibitem[WZZ23]{wang2023gaussian}
Zhenfu Wang, Xianliang Zhao, and Rongchan Zhu.
\newblock Gaussian fluctuations for interacting particle systems with singular
  kernels.
\newblock {\em Archive for Rational Mechanics and Analysis}, 247(5):101, 2023.

\bibitem[Zam98]{zamir1998proof}
Ram Zamir.
\newblock A proof of the fisher information inequality via a data processing
  argument.
\newblock {\em IEEE Transactions on Information Theory}, 44(3):1246--1250,
  1998.

\bibitem[ZDE20]{zelati2020relation}
Michele~Coti Zelati, Matias~G Delgadino, and Tarek~M Elgindi.
\newblock On the relation between enhanced dissipation timescales and mixing
  rates.
\newblock {\em Communications on Pure and Applied Mathematics},
  73(6):1205--1244, 2020.

\bibitem[Zha10]{zhang2010stochastic}
Xicheng Zhang.
\newblock Stochastic volterra equations in banach spaces and stochastic partial
  differential equation.
\newblock {\em Journal of Functional Analysis}, 258(4):1361--1425, 2010.

\bibitem[Zha23]{zhao2023derivation}
Xianliang Zhao.
\newblock {\em Derivation of macroscopic equations for interacting diffusions
  with singular kernels}.
\newblock Phd thesis, Universität Bielefeld, 2023.

\bibitem[ZZ21]{zhang2021stochastic}
Xicheng Zhang and Guohuan Zhao.
\newblock Stochastic lagrangian path for leray’s solutions of 3d
  navier--stokes equations.
\newblock {\em Communications in Mathematical Physics}, 381(2):491--525, 2021.

\end{thebibliography}
\end{document}